\documentclass[12pt]{amsart}
\usepackage{tikz-cd}
\usepackage{enumitem}
\setlist[description]{leftmargin=\parindent,labelindent=\parindent}
\usetikzlibrary{matrix,arrows,decorations.pathmorphing}
\usepackage{amssymb}
\usepackage{amsmath,amscd}
\usepackage{mathrsfs}
\usepackage{url}
\usepackage{cite}
\usepackage{fullpage}
\usepackage{hyperref}
\usepackage{verbatim}
\usepackage{comment}
\usepackage{fancyvrb}
\usepackage{fvextra}
\usepackage{caption}
\usepackage{ytableau}

\usepackage{pgfplots}
\pgfplotsset{compat=1.16}

\newtheorem{thm}{Theorem}
\newtheorem{prop}[thm]{Proposition}
\newtheorem{lem}[thm]{Lemma}
\newtheorem{cor}[thm]{Corollary}
\newtheorem{conj}[thm]{Conjecture}

\theoremstyle{definition}
\newtheorem{definition}[thm]{Definition}

\newtheorem{rem}[thm]{Remark}

\newcommand{\X}{\mathcal{X}}

\renewcommand{\H}{\mathsf{H}}

\renewcommand{\ss}{\mathbb{S}}
\newcommand{\qq}{\mathbb{Q}}
\newcommand{\R}{\mathsf{R}}

\newcommand{\RH}{\mathsf{RH}}

\newcommand{\M}{\mathcal{M}}

\newcommand{\A}{\mathsf{A}}

\renewcommand{\S}{\mathsf{S}}

\DeclareMathOperator{\ct}{ct}

\renewcommand{\tilde}{\widetilde}
\DeclareMathOperator{\rank}{rank}

\DeclareMathOperator{\im}{Im}

\newcommand{\FZ}{\mathsf{FZ}}

\usepackage{wrapfig}

\renewcommand{\bar}{\overline}
\newcommand{\Mb}{\overline{\M}}

\title{The Gorenstein property and Pixton's conjecture for compact type moduli}
\author{Samir Canning}
\author{Hannah Larson}
\author{Johannes Schmitt}

\thanks{S.C. was supported by a Hermann-Weyl-instructorship from the Forschungsinstitut für Mathematik at ETH Z\"urich and the SNSF Ambizione grant 223473. This research was partially conducted during the period H.L. served as a Clay Research Fellow. J.S. was supported by the SNSF grant 219369 and SwissMAP}

\begin{document}
\begin{abstract}
%We completely determine for which pairs $(g,n)$ the tautological ring of $\M_{g,n}^{\ct}$ is Gorenstein. 
We show that the tautological ring of $\M_{g,n}^{\ct}$ is not Gorenstein for $g\geq 2$ and $2g+n\geq 12$. We prove new cases of Pixton's conjecture that the $3$-spin relations are a complete set of relations for the tautological ring, including $\M_{6}^{\ct}$, $\M_{5,2}^{\ct}$, and $\M_7^{\ct}$. These are the first known cases where Pixton's conjecture is true, but the tautological ring is not Gorenstein. These results are also a key ingredient in recent work on non-tautological cycles on the moduli space of principally polarized abelian varieties.
\end{abstract}
\maketitle
\section{Introduction}
\subsection{The tautological ring}
Let $\Mb_{g,n}$ be the moduli space of stable curves of genus $g$ with $n$ markings. For a stable graph $\Gamma$ of genus $g$ with $n$ legs and vertex set $V$, we set \[\Mb_{\Gamma}=\prod_{v\in V} \Mb_{g(v),n(v)}.\]
There is a proper gluing morphism
\[
\xi_{\Gamma}:\Mb_{\Gamma}\rightarrow \Mb_{g,n}
\]
whose image is the closure of the locus of curves with dual graph $\Gamma$. Let $\pi:\Mb_{g,n+1}\rightarrow \Mb_{g,n}$ be the universal curve and $s_i:\Mb_{g,n}\rightarrow \Mb_{g,n+1}$ the universal sections. The $\psi$ classes on $\Mb_{g,n}$ are defined as
\[
\psi_i=c_1(s_i^*\omega_{\pi})\in \mathsf{A}^1(\Mb_{g,n}).
\]
The Arbarello--Cornalba $\kappa$ classes are 
\[
\kappa_j = \pi_*(\psi_{n+1}^{j+1})\in \mathsf{A}^j(\Mb_{g,n}),
\]
and the lambda classes are
\[
\lambda_k = c_k (\pi_*\omega_\pi) \in \A^k(\Mb_{g,n}).
\]

Let $\pi_v:\Mb_{\Gamma}\rightarrow \Mb_{g(v),n(v)}$ be the projection map. A \emph{decoration} on $\Gamma$ is a product of monomials in $\psi$ and $\kappa$ classes of total degree at most $3g(v)-3+n(v)$ pulled back from the moduli spaces associated to the vertices by $\pi_v$. Let $\S^*(\Mb_{g,n})$ be the $\qq$-vector space whose basis elements are pairs $[\Gamma,\gamma]$, where $\Gamma$ is a stable graph of genus $g$ with $n$ legs and $\gamma$ is a decoration. The vector space $\S^*(\Mb_{g,n})$ is finite dimensional and graded:
\[
\deg\ [\Gamma,\gamma] = |E(\Gamma)|+\deg(\gamma),
\]
where $E(\Gamma)$ is the set of edges of $\Gamma$.
It also has the structure of a $\qq$-algebra, with the product determined by the intersection theory of the boundary strata of $\Mb_{g,n}$, see \cite[Appendix]{GraberPandharipande} for details. The product respects the grading, and so $\S^*(\Mb_{g,n})$ is a graded algebra called the \emph{strata algebra}.

Let $\M_{g,n}^{\ct}$ be the moduli space of compact type curves of genus $g$ with $n$ markings. For $g \geq 1$, there is a degree shifting map on underlying graded vector spaces
\[
\xi_{\mathsf{loop}}:\S^{*-1}(\Mb_{g-1,n+2})\rightarrow \S^*(\Mb_{g,n}), 
\]
given by connecting the last two legs of the stable graph in the domain by an edge. The quotient is denoted by $\S^*(\M^{\ct}_{g,n})$, the \emph{compact type strata algebra}. It has as a basis the pairs $[\Gamma,\gamma]$, where $\Gamma$ is a stable tree of genus $g$ with $n$ legs. 

%\johannes{Small nitpick: Pixton's program would also eliminate any compact type decorated strata that vanish for dimension reasons (decoration on vertex $v$ above the local socle degree at $v$). We might add a remark about this in the computation-section; this comment is mostly an FYI and reminder for myself. }

Strata algebra classes give rise to Chow and cohomology classes on $\Mb_{g,n}$ by pushforward from $\Mb_{\Gamma}$. The following diagram commutes:
\[
\begin{tikzcd}
{\S^*(\Mb_{g,n})} \arrow[r] \arrow[d] & {\mathsf{A}^*(\Mb_{g,n})} \arrow[d] \\
{\S^*(\M_{g,n}^{\ct})} \arrow[r]      & {\mathsf{A}^*(\M^{\ct}_{g,n})}.    
\end{tikzcd}   
\]
The images of the horizontal maps are by definition the \emph{tautological rings} $\R^*(\Mb_{g,n})$ and $\R^*(\M_{g,n}^{\ct})$, respectively. We denote the kernel of the upper horizontal map by $\mathsf{I}_{\A}^*(\Mb_{g,n})$ and of the lower horizontal map by $\mathsf{I}_{\A}^*(\M_{g,n}^{\ct})$. These are the \emph{ideals of tautological relations} for stable and compact type moduli, respectively.

We can further compose with the cycle class map, obtaining a commutative diagram:
\[
\begin{tikzcd}
{\S^*(\Mb_{g,n})} \arrow[r] \arrow[d] & {\mathsf{H}^{2*}(\Mb_{g,n})} \arrow[d] \\
{\S^*(\M_{g,n}^{\ct})} \arrow[r]      & {\mathsf{H}^{2*}(\M^{\ct}_{g,n})}.     
\end{tikzcd}
\]
The images of the horizontal maps are the \emph{tautological cohomology rings} $\RH^{2*}(\Mb_{g,n})$ and $\RH^{2*}(\M_{g,n}^{\ct})$. The kernel of the upper horizontal map is denoted by $\mathsf{I}_{\H}^*(\Mb_{g,n})$ and of the lower horizontal map by $\mathsf{I}_{\H}^*(\M_{g,n}^{\ct})$. These are the \emph{ideals of cohomological tautological relations} for stable and compact type moduli, respectively. We always have $\mathsf{I}_{\A}^*\subset \mathsf{I}_{\H}^*$.

A fundamental open problem in the intersection theory of moduli spaces of curves is to determine the structure of the tautological rings. Even basic aspects are not well understood. It is unknown, for example, if $\mathsf{I}^*_{\A}=\mathsf{I}^*_{\H}$ for stable or compact type moduli. It is also unknown if the restriction map $\mathsf{I}_{\A}^*(\Mb_{g,n})\rightarrow \mathsf{I}_{\A}^*(\M_{g,n}^{\ct})$ is surjective, and analogously for the cohomological ideals. In other words, we do not know if the sequence
\[
\R^{k-1}(\Mb_{g-1,n+2})\rightarrow \R^k(\Mb_{g,n})\rightarrow \R^{k}(\M^{\ct}_{g,n})\rightarrow 0
\]
is always exact.

There have been two prominent proposals for the structure of the tautological rings: the Gorenstein property and Pixton's conjecture, which we describe in the next two sections.
\subsection{The Gorenstein property}
The Gorenstein property was first studied by Faber in the tautological ring of $\M_g$. The tautological ring $\R^*(\M_g)$ is the image of $\R^*(\Mb_{g})$ in $\mathsf{A}^*(\M_g)$ under restriction, or equivalently, the subring generated by the $\kappa$ classes. 

The Gorenstein property concerns the intersection pairing
\begin{equation}\label{smoothpairing}
    \R^i(\M_g)\times \R^{g-2-i}(\M_g)\rightarrow \R^{g-2}(\M_g)\cong \qq.
\end{equation}
Looijenga \cite{Looijenga} showed that $\dim \R^{g-2}(\M_g)\leq 1$ and that $\dim \R^i(\M_g)=0$ for $i>g-2$. Faber \cite[Theorem 2]{Faberconjecture} proved that $\dim \R^{g-2}(\M_g)\geq 1$, establishing the isomorphism
\[
\R^{g-2}(\M_g)\cong \qq.
\]
The pairing \eqref{smoothpairing} is defined as
\[
(\alpha,\beta)\mapsto \int_{\Mb_g} \bar{\alpha}\cdot\bar{\beta}\cdot\lambda_{g-1}\cdot\lambda_{g}.
\]
Here, $\bar{\alpha}$ and $\bar{\beta}$ are arbitrary lifts of $\alpha$ and $\beta$ to $\R^*(\Mb_{g})$. The pairing is well-defined because $\lambda_{g-1}\cdot\lambda_{g}$ vanishes on the boundary $\Mb_{g}\smallsetminus \M_g$ \cite[p. 112]{Faberconjecture}. There is also an analogous pairing in the cohomological tautological ring.

Faber conjectured \cite[Conjecture 1a]{Faberconjecture} that the intersection pairings \eqref{smoothpairing}
are perfect for all $i$. That is, $\R^*(\M_g)$ is a Gorenstein ring with socle in codimension $g-2$. If the Gorenstein conjecture is true, then there is an algorithm to compute the ideal of relations among the $\kappa$ classes: a homogeneous polynomial in the $\kappa$ classes of degree $i$ is zero if and only if it pairs to zero with all homogeneous $\kappa$ polynomials of degree $g-2-i$. The pairing can be computed explicitly using the proportionalities of \cite[Conjecture 1c]{Faberconjecture}, which has now been proven in many different ways \cite{GetzlerPandharipande,LiuXu,GouldenJacksonVakil}.

Faber provided low genus evidence for the Gorenstein conjecture on $\M_g$ computationally. Originally, he showed that the conjecture is true when $g\leq 15$ \cite{Faberconjecture}, and later he extended these computations to $g\leq 23$. He also showed, however, that the ring generated by the $\kappa$ classes modulo the Faber--Zagier relations is not Gorenstein when $g=24$. See Section \ref{pixconj} below for a discussion of the Faber--Zagier relations and their generalizations. For $g\geq 24$, the Gorenstein conjecture remains open.

Analogous conjectures (or speculations) were made for $\Mb_{g,n}$ and $\M_{g,n}^{\ct}$ \cite{Faberspeculation,PandharipandeGorenstein}. There are intersection pairings
\begin{equation}\label{barpairing}
    \R^i(\Mb_{g,n})\times \R^{3g-3+n-i}(\Mb_{g,n})\rightarrow \R^{3g-3+n}(\Mb_{g,n})\cong \qq, \qquad (\alpha,\beta)\mapsto \int_{\Mb_{g,n}} \alpha\cdot \beta
\end{equation}
and
\begin{equation}\label{ctpairing}
    \R^i(\M_{g,n}^{\ct})\times \R^{2g-3+n-i}(\M_{g,n}^{\ct})\rightarrow \R^{2g-3+n}(\M_{g,n}^{\ct})\cong \qq, \qquad (\alpha,\beta)\mapsto \int_{\Mb_{g,n}} \bar{\alpha}\cdot\bar{\beta}\cdot\lambda_g.
\end{equation}
    Here $\bar{\alpha}$ and $\bar{\beta}$ are arbitrary lifts of $\alpha$ and $\beta$ to $\Mb_{g,n}$. The latter pairing is well-defined because $\lambda_g$ vanishes on the boundary $\Mb_{g,n}\smallsetminus \M_{g,n}^{\ct}$ \cite[Equation 5]{FPGorenstein}. The codomain of the pairing is called the \emph{socle}. For the fact that the socle is one dimensional, see \cite{FaberPandharipande,GraberVakil}. If the pairings \eqref{barpairing} (respectively, \eqref{ctpairing}) are perfect, then we say $\R^*(\Mb_{g,n})$ (respectively, $\R^*(\M^{\ct}_{g,n})$) is \emph{Gorenstein}. We will also consider the analogous pairings in the cohomological tautological rings $\RH^*(\Mb_{g,n})$ and $\RH^*(\M_{g,n}^{\ct})$.

    Just as in the case of $\M_g$, if the Gorenstein property holds, it gives an algorithm for determining the ideals of tautological relations $\mathsf{I}_{\A}^*(\Mb_{g,n})$ and $\mathsf{I}_{\A}^*(\M_{g,n}^{\ct})$, thereby completely determining the structure of $\R^*(\Mb_{g,n})$ and $\R^*(\M_{g,n}^{\ct})$. The pairings \eqref{barpairing} and \eqref{ctpairing} can be computed explicitly and have been implemented in the Sage package \texttt{admcycles} \cite{Faberalgorithm,admcycles}.

 Note that $\M_{0,n}^{\ct}=\Mb_{0,n}$ is compact and the tautological ring is equal to the entire Chow or cohomology ring by a result of Keel \cite{Keel}. Therefore, the tautological rings in genus $0$ are Gorenstein by Poincar\'e duality. Furthermore, Tavakol \cite{Tavakol} proved $\R^*(\M_{1,n}^{\ct})$ is always Gorenstein, and Petersen \cite{Petersengenus1} proved $\R^*(\Mb_{1,n})$ is always Gorenstein. 
 
 Nevertheless, neither $\R^*(\Mb_{g,n})$ nor $\R^*(\M^{\ct}_{g,n})$ is always Gorenstein. Petersen and Tommasi proved $\R^*(\Mb_{2,n})$ is not Gorenstein when $n\geq 20$ \cite{PetersenTommasi}, and recent work of the first named author shows that $\R^*(\Mb_{g,n})$ is not Gorenstein for $g\geq 2$ and $2g+n\geq 24$ \cite{CGorenstein}. For compact type moduli, Petersen \cite{Petersen} proved the tautological ring of $\M^{\ct}_{2,n}$ is not Gorenstein when $n\geq 8$.
Our first theorem extends Petersen's compact type result to higher genera.
\begin{thm}\label{Gortheorem}
    If $g\geq 2$ and $2g+n\geq 12$, the tautological rings $\R^*(\M_{g,n}^{\ct})$ and $\R\H^*(\M_{g,n}^{\ct})$ are not Gorenstein.
\end{thm}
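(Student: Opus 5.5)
We follow Petersen's strategy for $\M_{2,8}^{\ct}$ and the first named author's strategy for $\Mb_{g,n}$: exhibit a nonzero tautological class that pairs to zero with everything. Such a class comes from a non-tautological odd cohomology class via the boundary. Concretely, $\Mb_{1,11}$ carries the weight $11$ cohomology $\H^{11}(\Mb_{1,11})$, which is built from the cusp form $\Delta$ of weight $12$ and is not tautological. The idea is to push forward a Künneth-type class from a boundary stratum that involves a genus-one vertex with $11$ special points. Such a class lies in $\H^*(\M_{g,n}^{\ct})$ but not in the tautological ring, and it pairs nontrivially with some tautological class.

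Since $2g+n\ge 12$, we can choose a stable tree $\Gamma$ of compact type with two genus-one vertices $v_1,v_2$, each carrying $11$ special points (legs or half-edges). The remaining genus $g-2\ge 0$ and the other markings are distributed on further vertices so that $n(v_1)=n(v_2)=11$. The base case is $g=2,n=8$, where the graph has two genus-one vertices with edges to a genus-zero vertex, realizing Petersen's example. For higher $g$ or $n$ we attach extra vertices, or use the standard trick of pulling back along forgetful maps and gluing in genus-one tails. This reduces to the inductive statement that non-Gorensteinness propagates. Rather than relying on that, I would directly construct the class for every $(g,n)$.

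Let $\omega\in \H^{11,0}(\Mb_{1,11})$ and $\bar\omega$ be its conjugate. Consider $\alpha=\xi_{\Gamma*}(\omega\otimes\bar\omega\otimes 1)$, restricted to $\M_{g,n}^{\ct}$. Its Künneth component $\omega\otimes\bar\omega$ is a Hodge class, and after symmetrization it can be written as a difference between the diagonal class of $\Mb_{1,11}\times\Mb_{1,11}$ and its tautological Künneth projection. More precisely, let $\Delta$ be the class of the diagonal and $\Delta^{\mathrm{taut}}$ its projection onto $\RH\otimes\RH$ under the Künneth decomposition; then $\Delta-\Delta^{\mathrm{taut}}$ is the relevant algebraic class, an $S_{11}$-equivariant piece. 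Pushing the honest diagonal forward along $\xi_\Gamma$ gives a tautological class? No: the diagonal is not a stratum. The better route is the first named author's approach. Find a tautological class $\beta$ in $\R^*(\M_{g,n}^{\ct})$ whose image in cohomology, paired under $\lambda_g$ against all tautological classes, vanishes, yet which is nonzero. Equivalently, show that the tautological pairing matrix has strictly smaller rank than $\dim\RH^k$. By Poincaré duality on $\Mb_{g,n}$ together with the $\lambda_g$-pairing, the radical of the pairing on $\RH^*(\M^{\ct}_{g,n})$ is nonzero exactly when some tautological class $\beta$ has a nonzero pairing $\int\beta\cdot x\cdot\lambda_g$ with a non-tautological class $x$ and is not detected otherwise. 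We take $x=\alpha$, the $\omega\otimes\bar\omega$ class. We then need a tautological $\beta$, namely a boundary stratum class meeting $\Gamma$ appropriately, with $\int \beta\cdot\alpha\cdot\lambda_g\neq 0$. After excess intersection and $\lambda_g$ splitting as $\prod\lambda_{g(v)}$ on compact type strata, this reduces to $\int_{\Mb_{1,11}}\omega\bar\omega\lambda_1\cdots\neq0$. Then $\beta$'s projection onto the orthogonal complement of $\RH$ under the full Poincaré-type pairing is nonzero, while $\beta$ cannot be detected purely tautologically.

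The main obstacle is making this precise: the radical statement requires showing that the tautological part of $\beta$ still represents a nonzero class but pairs to zero tautologically. Following \cite{Petersen}, I would handle this via weight/Hodge theory. Specifically, I would use the $\lambda_g$-pairing on $\H^*(\M^{\ct}_{g,n})$ (via $\H_c$ versus $\H$ and $\lambda_g$ acting as the "compact type" Gysin map) and show that $\RH^*$ maps injectively to the $\lambda_g$-dual, while the dual space of $\RH$ inside $\H$ must include $\alpha$-directions. Computing the key integral, likely through $\Mb_{1,11}$ with $\lambda_1$, and verifying the existence of the tree for all $g\ge2$, $2g+n\ge12$, are the steps I expect to be delicate. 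The Chow statement then follows from the cohomological one, because the cycle map surjects $\R^*\to\RH^*$ with the same socle pairing.
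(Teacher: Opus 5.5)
There is a genuine gap, and it appears at the first step. For a stable tree one has $\sum_{v}(2g(v)-2+n(v))=2g-2+n$. So a single vertex of type $(1,11)$ already forces $2g+n\geq 13$, and the two such vertices you want force $2g+n\geq 24$.

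Your tree therefore exists nowhere in the range $12\leq 2g+n<24$. In the base cases $2g+n=12$, no compact type stratum contains even one factor $\Mb_{1,11}$. This includes $\M^{\ct}_{6}$ and Petersen's $\M^{\ct}_{2,8}$; your graph for $(2,8)$ cannot exist, since $22>10=2g-2+n$. So the weight-$11$ cohomology attached to $\Delta$ cannot be what produces invisible classes there. What you describe is the Graber--Pandharipande/Petersen--Tommasi mechanism for $\Mb_{2,20}$, which is why the bound for $\Mb_{g,n}$ is $2g+n\geq 24$.

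Even where the stratum exists, the nonvanishing you need is impossible. For any tautological $\beta$ on $\Mb_{g,n}$, the class $\xi_\Gamma^*(\beta\lambda_g)$ has a tautological, hence even-degree, K\"unneth decomposition. So $\int_{\Mb_{g,n}}\beta\cdot\xi_{\Gamma*}(\omega\otimes\bar\omega\otimes 1)\cdot\lambda_g$ is a sum of products involving $\int_{\Mb_{1,11}}\omega\cdot a$ with $a$ of even degree, and these all vanish. A class like $\alpha$ is orthogonal to all of $\RH^*$ by construction. That is how one proves it is non-tautological, but it gives no \emph{tautological} class orthogonal to $\RH^*$, which is what failure of Gorenstein means. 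Your own ``main obstacle'' is therefore the entire content of the theorem. Note also that $\M^{\ct}_{g,n}$ is not compact, so there is no Poincar\'e duality on $\H^*(\M^{\ct}_{g,n})$ to compare the $\lambda_g$-pairing against.

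The paper relies on exactly the propagation you chose to bypass. Pulling back along a forgetful map (Lemma~\ref{morepoints}) and gluing an elliptic tail at a new marking (Lemma~\ref{moregenus}) preserve invisibility, because these operations preserve the socle. This reduces the theorem to $2g+n=12$, where the paper uses no conceptual source of invisible classes at all. The case $(2,8)$ is Petersen's. For $(6,0),(5,2),(4,4),(3,6)$ the paper shows $\dim\RH^8(\M^{\ct}_{g,n})<\dim\RH^{10}(\M^{\ct}_{g,n})$, working on $\ss_2\times\ss_2\times\ss_2$-invariants for $(3,6)$. The upper bound on $\RH^8$ comes from ranks of $3$-spin relation matrices. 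The exact value of $\RH^{10}$ comes from completeness of the $3$-spin relations in degree $5$, via Lemma~\ref{bartoct}. That lemma needs completeness on $\Mb_{g,n}$, checked by pairing ranks, by the Bergstr\"om--Faber character for $\Mb_{3,6}$, or by the Bini--Harer Euler characteristic and Faber's point counts for $\Mb_{4,4}$. It also needs the Canning--Larson result that $\H^8(\Mb_{g-1,n+2})$ is tautological. Your closing remark that the Chow statement follows from the cohomological one is correct; it is Proposition~\ref{chowvscoh}.
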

The proof of Theorem \ref{Gortheorem} goes by reducing to the case $g\geq 2$ and $2g+n=12$, and studying each such pair $(g,n)$ individually. It is not clear what the relationship between the failure of the Gorenstein conjecture is for the cases when $2g+n=12$, as there is no gluing map between the corresponding moduli spaces. The analogous result for $\Mb_{g,n}$ is proven by reduction to the case $g=2$ and $n=20$, using the self-gluing map \cite{PetersenTommasi,CGorenstein}.

We obtain a partial converse to Theorem \ref{Gortheorem}. 
\begin{thm}\label{isGorenstein}
    If $g=0,1$ or $g\geq 2$, $2g+n<12$, and $(g,n)\neq (2,7)$, then $\R^*(\M_{g,n}^{\ct})$ and $\RH^*(\M_{g,n}^{\ct})$ are isomorphic and Gorenstein.
\end{thm}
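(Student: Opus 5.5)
The cases $g=0$ and $g=1$ follow from known results. For $g=0$ we have $\M_{0,n}^{\ct}=\Mb_{0,n}$, and Keel's theorem \cite{Keel} shows that $\R^*=\A^*\cong \H^{2*}$ satisfies Poincar\'e duality. For $g=1$, Tavakol \cite{Tavakol} proved that $\R^*(\M_{1,n}^{\ct})$ is Gorenstein. His argument also identifies $\R^*$ with its image in cohomology, because the socle pairing factors through cohomology.

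The remaining cases form a finite list: $g\geq 2$, $2g+n\leq 11$, $(g,n)\neq(2,7)$. That is, $g=2$ with $n\leq 6$, $g=3$ with $n\leq 5$, $g=4$ with $n\leq 3$, and $g=5$ with $n\leq 1$. I would treat these by a uniform computational argument in \texttt{admcycles}. The key formal observation is the chain of surjections
\[
\S^*(\M_{g,n}^{\ct}) \twoheadrightarrow \R^*(\M_{g,n}^{\ct}) \twoheadrightarrow \RH^{2*}(\M_{g,n}^{\ct}) \twoheadrightarrow \mathsf{G}^*_{g,n},
\]
where $\mathsf{G}^*_{g,n}$ is the Gorenstein quotient. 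This quotient is $\S^*$ modulo the kernel of the $\lambda_g$-pairing \eqref{ctpairing}, and the pairing is computable on strata classes. The last map exists because the pairing is defined on $\RH$ and kills cohomological relations. So it suffices to exhibit a set of tautological relations $\mathcal{P}^*_{g,n}\subset \mathsf{I}_\A^*(\M_{g,n}^{\ct})$ with $\dim \S^k/\mathcal{P}^k = \dim \mathsf{G}^k$ for every $k$. Then all the surjections are isomorphisms in every degree, which gives both the isomorphism $\R^*\cong\RH^*$ and the Gorenstein property at once.

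For $\mathcal{P}$ I would take Pixton's $3$-spin relations restricted to compact type, together with the vanishing of decorations above local socle degree. The $3$-spin relations are proven to hold in Chow by Janda. I would compute the rank of this span in each degree and compare it with the rank of the pairing matrix between $\S^k$ and $\S^{2g-3+n-k}$. By symmetry of the pairing, it suffices to check degrees $k\leq \lfloor(2g-3+n)/2\rfloor$ together with their complements.

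The main obstacle is computational size, chiefly for $(5,1)$, $(4,3)$ and $(3,5)$, where the number of decorated stable trees is large. To control this, I would exploit the $S_n$-symmetry and compute in isotypic components. I would also use the fact that $\R^*(\M^{\ct}_{g,n})$ is generated in degree $k$ by pushforwards from boundary strata, and those pieces are already known inductively to match their Gorenstein quotients on the smaller vertex spaces. Then only the interior classes ($\psi,\kappa$ monomials on a single vertex) need fresh rank computations. If the $3$-spin relations fall short in some degree, I would supplement them with pullbacks and pushforwards of relations from smaller $(g',n')$ along forgetful and gluing maps.
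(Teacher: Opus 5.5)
Your proposal is correct and is essentially the paper's proof. The cases $g=0,1$ come from Keel and Tavakol. For the finitely many remaining $(g,n)$, an upper bound on $\dim \R^i$ from the rank (computed mod $p$ in the paper) of the $3$-spin relation matrix is matched against a lower bound from the rank of the $\lambda_g$-pairing, and your chain of surjections plays the role of Proposition~\ref{chowvscoh}. The paper saves computation in two ways: it uses Theorem~\ref{socleminus1} for $i=1$ (equivalently $i=2g-4+n$), and for $g=2$ it uses Petersen's symmetry of tautological Betti numbers so that only $i\leq\lfloor (2g-3+n)/2\rfloor$ needs checking.

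One caution: your suggested shortcut that only the interior classes need fresh rank computations is not valid as stated. The linear relations among pushforwards from different strata, and between these and interior classes, are not determined by the vertex spaces. For example, Keel's divisor relations on $\Mb_{0,n}$ are relations among boundary divisors whose vertex spaces carry no relations in degree $0$. So the full rank computation in each degree is still required, as the paper does.
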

As noted above, the cases $g=0,1$ are due to Keel \cite{Keel} and Tavakol \cite{Tavakol}, respectively. The proof of Theorem \ref{isGorenstein} uses Theorem \ref{socleminus1} below. We expect that the full converse of Theorem \ref{Gortheorem} holds. In the case $(g,n)=(2,7)$, computer calculations have shown that the pairing is perfect in all degrees except the middle $\R^4(\M_{2,7}^{\ct}) \times \R^4(\M_{2,7}^{\ct}) \to \qq$.
See Section \ref{computationalaspects} for a discussion of the computational aspects. In Table \ref{Gortable} below, we record the ranks of the tautological groups when $g\geq 2$ and $2g+n<12$ except for $(g,n)=(2,7)$ in codimension $4$, where we have only managed to obtain an upper bound.

\begin{center}
\begin{table}[h]
\begin{tabular}{ p{1cm}||p{1cm}|p{1cm}|p{1cm}|p{1cm}|p{1.5cm}|p{1cm}|p{1.2cm}|p{1cm}|p{1cm}|} 
 $(g,n)$ & 0 & 1 & 2 & 3 & 4 & 5 & 6 & 7 & 8  \\ 
 \hline
 $(2,0)$ & 1 & 1 &  &  &  &  &  &  &    \\
 $(2,1)$ & 1 & 2 & 1 &  &  &  & &  &    \\
 $(2,2)$ & 1 & 5 & 5 & 1 &  &  &  &  &    \\
 $(2,3)$ & 1 & 11 & 24 & 11 & 1 &  &  &  &    \\
 $(2,4)$ & 1 & 23 & 101 & 101 & 23 & 1 &  &  &    \\ 
 $(2,5)$ & 1 & 47 & 384 & 769 & 384 & 47 & 1 &  &    \\
 $(2,6)$ & 1 & 95 & 1362 & 4981 & 4981 & 1362 & 95 & 1 &    \\
 $(2,7)$ & 1 & 191 & 4610 & 28606  & $\leq$ 52330 & 28606 & 4610 & 191 & 1   \\
 \hline
 $(3,0)$ & 1 & 2 & 2 & 1 &  & &  &  &    \\
 $(3,1)$ & 1 & 4 & 7 & 4 & 1 &  &  &  &    \\
 $(3,2)$ & 1 & 8 & 24 & 24 & 8 & 1 &  &  &    \\
 $(3,3)$ & 1 & 16 & 82 & 144 & 82 & 16 & 1 &  &    \\
 $(3,4)$ & 1 & 32 & 274 & 813 & 813 & 274 & 32 & 1 &    \\
 $(3,5)$ & 1 & 64 & 895 & 4281 & 7258 & 4281 & 895 & 64  & 1   \\
 \hline
 $(4,0)$ & 1 & 3 & 6 & 6 & 3 & 1 &  &  &    \\
 $(4,1)$ & 1 & 5 & 17 & 25 & 17 & 5 & 1 &  &    \\
 $(4,2)$ & 1 & 10 & 51 & 120 & 120 & 51 & 10 & 1 &    \\
 $(4,3)$ & 1 & 20 & 158 & 568 & 882 & 568 & 158 & 20 & 1   \\
 \hline
  $(5,0)$ & 1 & 3 & 10 & 19 & 19 & 10 & 3 & 1 &    \\
 $(5,1)$ & 1 & 6 & 28 & 75 & 107 & 75 & 28 & 6 & 1  \\
\end{tabular}

\caption{The ranks of $\R^i(\M_{g,n}^{\ct})\cong \RH^{2i}(\M_{g,n}^{\ct})$ when $g\geq 2$ and $2g+n<12$.}\label{Gortable}
\end{table}
\end{center}

The proof of Theorem \ref{Gortheorem} gives concrete examples of nonzero classes in $\R^i(\M_{g,n}^{\ct})$ that pair to zero with every class in $\R^{2g-3+n-i}(\M_{g,n}^{\ct})$ for some $i\geq 5$. We conjecture that the codimension $5$ classes are of the minimal possible codimension.
\begin{conj}\label{lowiconjecture}
    For $i\leq 3$, the pairings
    \[
    \R^i(\M_{g,n}^{\ct})\times \R^{2g-3+n-i}(\M_{g,n}^{\ct})\rightarrow \R^{2g-3+n}(\M_{g,n}^{\ct})
    \]
   % and
    %\[
    %\RH^{2i}(\M_{g,n}^{\ct})\times \RH^{4g-6+2n-2i}(\M_{g,n}^{\ct})\rightarrow \RH^{4g-6+2n}(\M_{g,n}^{\ct})
    %\]
    are perfect. If $i=4$, the pairing is of rank $\dim \R^4(\M_{g,n}^{\ct})$. %and is perfect if $2g-7+n\neq 5$.
\end{conj}

%\hannah{Somehow, I think this conjecture would be more natural without the ``and is perfect if $2g-7+n\neq 5$" because then I start to wonder why we should believe that.
%I guess $\M_7^{\ct}$ is some evidence in that direction -- based just on that though, are you actually thinking more boldly that the minimal $i$ such that a Gorenstein kernel exists in $\R^{2g - 3 + n - i}$ is increasing with the genus? If we leave off the last part, then the conjecture seems simpler: it's saying we think the minimal $i$ such that a Gorenstein failure occurs in $\R^{2g - 3 + n - i}$ is $i = 4$ and because we think the rank of the pairing is $\dim \R^4$, in order for a failure to possibly happen, $2g - 3 + n - 4 > 4$, i.e. $2g + n \geq 12$.}

Conjecture \ref{lowiconjecture} implies the analogous conjecture in cohomology (see Proposition \ref{chowvscoh}).
When $i=0$, the conjecture holds, simply because the socle $\R^{2g-3+n}(\M_{g,n}^{\ct})$ is one dimensional \cite{FaberPandharipande,GraberVakil}. We prove the $i=1$ case of Conjecture \ref{lowiconjecture}.
\begin{thm}\label{socleminus1}
    The pairings
    \[
    \R^1(\M_{g,n}^{\ct})\times \R^{2g-4+n}(\M_{g,n}^{\ct})\rightarrow \R^{2g-3+n}(\M_{g,n}^{\ct})
    \]
    %and
    %\[
    %\RH^2(\M_{g,n}^{\ct})\times \R^{4g-8+2n}(\M_{g,n}^{\ct})\rightarrow \R^{4g-6+n}(\M_{g,n}^{\ct})
    %\]
    are perfect.
\end{thm}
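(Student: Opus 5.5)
We must show that every nonzero class in $\R^1(\M_{g,n}^{\ct})$ pairs nontrivially with something in $\R^{2g-4+n}(\M_{g,n}^{\ct})$. Since the pairing is into the one-dimensional socle, this is the same as showing that the rank of the pairing equals $\dim \R^1(\M_{g,n}^{\ct})$. The plan is therefore to find an explicit spanning set of $\R^1$, a matching explicit family in $\R^{2g-4+n}$, and to show that the resulting intersection matrix is nondegenerate after we account for the known relations in degree $1$.

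\emph{Step 1: generators of $\R^1$.} In degree one, $\R^1(\M_{g,n}^{\ct})$ is spanned by the following classes.
\begin{itemize}
\item $\kappa_1$ and the $\psi_i$.
\item The boundary divisors $\delta_{h,S}$ of compact type, meaning separating nodes splitting off genus $h$ with marking set $S$, subject to stability.
\end{itemize}
The relations in codimension one are classical. Arbarello--Cornalba describe the relations in $\mathrm{Pic}(\Mb_{g,n})$, and they restrict to compact type after removing $\delta_{\mathrm{irr}}$. In genus $g\ge 3$ these classes are independent on $\Mb_{g,n}$, with $\kappa_1$ a combination of $\lambda_1$ and boundary classes. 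In genus $1$ and $2$ there are extra relations: $\lambda_1$ is a boundary combination, and in genus $2$ we have the relation expressing $\psi_i$ via boundary classes. Because $\lambda_1$ restricted to compact type need not vanish, I would keep $\lambda_1$ as a possible generator. I would fix a basis $B_1$ of $\R^1(\M_{g,n}^{\ct})$ and compute its dimension. For example, it is $2^n + \ldots$, and this can be checked against the first column of Table~\ref{Gortable}.

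\emph{Step 2: dual test classes in codimension $2g-4+n$.} For each boundary divisor $\delta_{h,S}$, I would take a one-dimensional test stratum, that is, a curve class in compact type. The natural choice is a tree of $\lambda_g$-socle-type strata. Here we use that the $\lambda_g$-pairing on a stratum factors over the vertices, since $\lambda_g$ restricts to the product of the $\lambda_{g(v)}$, and that each vertex integral is a $\lambda_g$-Hodge integral. These are computable by the $\lambda_g$ conjecture:
\[
\int_{\Mb_{h,m}}\psi^{a}\lambda_h=\binom{2h-3+m}{a}\int_{\Mb_{h,1}}\psi_1^{2h-2}\lambda_h .
\]
Concretely, I would use curves in which one vertex carries a one-dimensional family, either a genus $0$ vertex with $4$ special points or a higher genus vertex with a single $\psi$ insertion removed from the socle monomial. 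All other vertices carry socle monomials. By choosing such curves, each intersecting a chosen divisor "transversally" and the others in a controlled way, we get a matrix that is triangular with respect to a suitable ordering of the $\delta_{h,S}$. The ordering would be by $(h,|S|)$, with the excess-intersection contributions of self-intersections, $-\psi-\psi'$, handled as lower-order terms.

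\emph{Step 3: nondegeneracy and the main obstacle.} The dual curves for $\psi_i$, $\kappa_1$ and $\lambda_1$ can be taken to be socle classes of $\M_{g,n}^{\ct}$ with one $\psi$ factor removed, possibly with $\lambda_1$ replaced. The key step is to show that the full matrix of pairings between $B_1$ and these test classes has rank $|B_1|$. This is where I expect the main difficulty. The divisor part is triangular, but the interaction between the non-boundary classes and the boundary classes involves the genus $\le 2$ relations, which makes a uniform argument delicate. I would first try an induction on $n$ via the forgetful map $\pi$. Pullbacks $\pi^*$ of the basis in $(g,n-1)$, together with $\psi_{n}$ and the divisors $\delta_{0,\{i,n\}}$, span $\R^1(\M_{g,n}^{\ct})$. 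Pairing against the classes $\pi^*\beta\cdot\psi_n$ and $\pi^*\beta\cdot\delta_{0,\{i,n\}}$, the projection formula reduces the computation to the $(g,n-1)$ pairing, plus an explicit block, by string and dilaton equations for $\lambda_g$ integrals. The base cases $n=0$ would rely on the known $\R^1(\M_g^{\ct})$ perfectness, which we check directly by Step 2 and a small low-genus computation. Finally, the same argument works verbatim in cohomology, since only intersection numbers are used.
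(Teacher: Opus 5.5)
\textbf{The gap: your opening reduction proves only half of perfectness, and the half you drop is the substance of the theorem.} Showing that the pairing has rank $\dim\R^1(\M_{g,n}^{\ct})$ shows only that $\R^1\to(\R^{2g-4+n})^\vee$ is injective, i.e.\ $\dim\R^{2g-4+n}(\M_{g,n}^{\ct})\ge\dim\R^1(\M_{g,n}^{\ct})$. The socle being one-dimensional says nothing about the other side. Perfectness also requires that no nonzero $v\in\R^{2g-4+n}(\M_{g,n}^{\ct})$ be orthogonal to all divisors, i.e.\ an \emph{upper} bound $\dim\R^{2g-4+n}\le\dim\R^1$. Intersection numbers against test classes can only give lower bounds. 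An upper bound requires enough relations in codimension $2g-4+n$, and your proposal contains no source of such relations.

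This is where the paper uses Theorem~$\star$ (Theorem~\ref{theoremstar}). Every class in $\R^{2g-4+n}_{\FZ}(\M_{g,n}^{\ct})$, which surjects onto $\R^{2g-4+n}$, is supported on the restricted decorated trees of Lemma~\ref{theoremstarconsequence}. Hence for $g\ge3$ it is pushed forward along the elliptic-tail map $\varphi$ (Lemma~\ref{pushforwardsurjective}). Exactness of the two-tail complex (Lemma~\ref{cx}) and induction on $g$ then give $(\im\varphi^*)^\perp=\im(\alpha_*-\beta_*)\subset\ker\varphi_*$, so a class orthogonal to all divisors vanishes. In genus $2$ there is one extra generator, $\xi_*(\psi\otimes\mathrm{pt})$, which is detected by $\lambda_1$. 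Perfectness of the $\FZ$-pairing then forces $\R^{2g-4+n}_{\FZ}=\R^{2g-4+n}$. Nothing in your proposal plays this role.

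\textbf{The half you do address is not carried out, and the induction you propose for it fails.} The classes $\pi^*B_1$, $\psi_n$, $\delta_{0,\{i,n\}}$ do not span $\R^1(\M_{g,n}^{\ct})$. Since $\pi^*\delta_{h,S}=\delta_{h,S}+\delta_{h,S\cup\{n\}}$, pullbacks never separate these two divisors. You get at most $\dim\R^1(\M_{g,n-1}^{\ct})+n$ classes: by Table~\ref{Gortable}, at most $11<16$ for $(g,n)=(3,3)$ and $4<5$ for $(2,2)$. Your base case $n=0$ is also not known independently; it is an instance of the theorem. Separately, in genus $2$ the $\psi_i$ are not boundary combinations on compact type. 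The genus-$2$ relation expresses $\lambda_1$, and hence $\kappa_1$, through $\psi$ and boundary classes.

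The paper instead inducts on $g$, with genus $1$ (Tavakol) as the base case. The pullback $\varphi^*$ is injective on $\R^1$ for $g\ge3$ (Lemma~\ref{pullbackinjective}) and has kernel spanned by $\lambda_1$ for $g=2$ (Lemma~\ref{g2ker}). Push--pull then transports test classes from $\M_{g-1,n+1}^{\ct}$. An explicit test-curve matrix might work for this direction, but you would still have to prove its nondegeneracy, which you leave as the main difficulty.
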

%\noindent %Theorem \ref{socleminus1} is used in the proof of Theorem \ref{Gortheorem}.
%The analogous pairing in cohomology is also perfect, see Proposition \ref{chowvscoh}. 

\subsection{Pixton's conjecture}\label{pixconj}
Pixton gave an alternate proposal for the structure of the tautological ring \cite{Pixton}. He defined a subspace $\FZ^*(\Mb_{g,n})\subset \S^*(\Mb_{g,n})$ that he conjectured was contained in $\mathsf{I}_{\A}^*(\Mb_{g,n})$, and hence $\mathsf{I}_{\H}^*(\Mb_{g,n})$.\footnote{$\mathsf{FZ}$ is an extension of the Faber--Zagier relations from $\M_g$ to $\Mb_{g,n}$.} The inclusion $\FZ^*(\Mb_{g,n})\subset \mathsf{I}_{\H}^*(\Mb_{g,n})$ was proven by Pandharipande--Pixton--Zvonkine \cite{PandharipandePixtonZvonkine} using the $3$-spin cohomological field theory. Later, Janda proved the inclusion $\FZ^*(\Mb_{g,n})\subset \mathsf{I}_{\A}^*(\Mb_{g,n})$. We now call the space $\FZ^*(\Mb_{g,n})$ the \emph{$3$-spin relations}.\footnote{The $3$-spin relations are also known as the \emph{generalized Faber--Zagier relations} and \emph{Pixton's relations}.} By restriction, one also obtains relations on $\M_{g,n}^{\ct}$, denoted $\FZ^*(\M_{g,n}^{\ct})$. We write
\[
\R^*_{\FZ}(\Mb_{g,n})=\S^*(\Mb_{g,n})/\FZ^*(\Mb_{g,n}) \quad \text{ and } \quad \R^*_{\FZ}(\M_{g,n}^{\ct})=\S^*(\M^{\ct}_{g,n})/\FZ^*(\M^{\ct}_{g,n}).
\]
\begin{conj}[Pixton \cite{Pixton}]\label{Pixton}
The $3$-spin relations are complete in Chow and cohomology:
\[
\R^*_{\FZ}(\Mb_{g,n}) = \R^*(\Mb_{g,n}) = \RH^*(\Mb_{g,n}) \quad \text{and} \quad \R^*_{\FZ}(\M_{g,n}^{\ct}) = \R^*(\M_{g,n}^{\ct})=\RH^*(\M_{g,n}^{\ct}).
%\FZ^*(\Mb_{g,n})=\mathsf{I}_{\A}^*(\Mb_{g,n})=\mathsf{I}_{\H}^*(\Mb_{g,n}), \qquad \FZ^*(\M^{\ct}_{g,n})=\mathsf{I}_{\A}^*(\M^{\ct}_{g,n})=\mathsf{I}_{\H}^*(\M^{\ct}_{g,n}).
\]
\end{conj}
There has been significant effort to produce relations in the tautological ring, but the only known relations are contained in the span of the $3$-spin relations \cite{CladerJanda,Jandarelations}, and so Conjecture \ref{Pixton} remains open. Pixton's conjecture in codimension $0$ is trivial. Gubarevich \cite{Gubarevich} proved Pixton's conjecture in codimension $1$ for $\Mb_{g,n}$:
\[
\R^1_{\FZ}(\Mb_{g,n}) = \R^1(\Mb_{g,n}) = \RH^2(\Mb_{g,n}).
\]
Kramer, Labib, Lewanski, and Shadrin \cite{KLLS} showed that the $3$-spin relations imply Graber and Vakil's Theorem $\star$ \cite{GraberVakil}. Arguing as in \cite[Section 5.5]{GraberVakil}, one sees that Pixton's conjecture holds for $\R^{3g-3+n}(\Mb_{g,n})=\RH^{6g-6+2n}(\Mb_{g,n})$.
Moreover, using the same line of reasoning, the thesis of Al-Aidroos \cite{AA} establishes Pixton's conjecture in dimension $1$ and $2$: $\R^{d - i}_{\FZ}(\Mb_{g,n}) = \R^{d - i}(\Mb_{g,n}) = \R\H^{2d - 2i}(\Mb_{g,n})$ for $i = 1, 2$ and $d = \dim \Mb_{g,n}$.

For compact type moduli, we prove some new cases of the conjecture.
\begin{thm}\label{Pixtonminus1}
    For $i\in \{0, 1, 2g-4+n, 2g-3+n\}$,
    \[
    \R^i_{\FZ}(\M^{\ct}_{g,n}) = \R^i(\M^{\ct}_{g,n}) = \RH^{2i}(\M^{\ct}_{g,n}).
    \]
\end{thm}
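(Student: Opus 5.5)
Since $\FZ^*(\M^{\ct}_{g,n})\subset \mathsf{I}_{\A}^*(\M^{\ct}_{g,n})\subset \mathsf{I}_{\H}^*(\M^{\ct}_{g,n})$, we have surjections $\R^i_{\FZ}\twoheadrightarrow \R^i\twoheadrightarrow \RH^{2i}$. It therefore suffices to prove the single inequality $\dim \R^i_{\FZ}(\M^{\ct}_{g,n})\le \dim \RH^{2i}(\M^{\ct}_{g,n})$ for each of the four values of $i$. The case $i=0$ is trivial, since both sides are $\qq$.

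\emph{The socle case $i=2g-3+n$.} The target $\RH^{2(2g-3+n)}(\M^{\ct}_{g,n})$ is nonzero: the class $\psi_1^{2g-3+n}$ (or a suitable $\kappa$ monomial) pairs nontrivially against $\lambda_g$ by the $\lambda_g$-formula. So it suffices to show $\dim \R^{2g-3+n}_{\FZ}\le 1$. I would imitate Graber--Vakil's proof that the socle is one dimensional \cite{GraberVakil}. The $3$-spin relations imply the relations they use: by \cite{KLLS}, FZ implies Theorem $\star$, and restricting to compact type gives the vanishing of $\psi$/$\kappa$ monomials of high degree modulo boundary. Using these, I would reduce every decorated tree in top degree to a multiple of a fixed generator, by induction on $(g,n)$, moving decorations onto genus-$0$ and genus-$1$ vertices where the tautological ring is known. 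The main check is that each step in \cite[Section 5]{GraberVakil} uses only relations in $\FZ$ or their pullbacks and pushforwards under gluing maps. This holds because $\FZ$ is closed under these operations.

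\emph{The cases $i=1$ and $i=2g-4+n$.} I would use Theorem~\ref{socleminus1}, which says the pairing $\R^1\times\R^{2g-4+n}\to\qq$ is perfect in Chow. The same argument applied in cohomology, where the pairing is computed by the same intersection numbers, gives $\dim\RH^2=\dim\RH^{2(2g-4+n)}=\operatorname{rank}$ of this pairing matrix. The pairing is already defined on $\S^*(\M^{\ct}_{g,n})$, and FZ relations lie in its kernel. It therefore remains to show that the FZ quotients in degrees $1$ and $2g-4+n$ have dimension at most the rank $r$ of the pairing between $\S^1$ and $\S^{2g-4+n}$.

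In degree $1$, $\S^1(\M^{\ct}_{g,n})$ is spanned by $\kappa_1$, the $\psi_i$, and the divisors of separating nodes. Restricting Gubarevich's theorem \cite{Gubarevich}, that FZ relations are complete in $\R^1(\Mb_{g,n})$, to compact type gives $\R^1_{\FZ}(\M^{\ct})=\R^1(\M^{\ct})$, because the restriction sequence is exact in degree $1$: the kernel on $\Mb$ maps onto the kernel on $\M^{\ct}$ modulo loop strata. We also need $\R^1=\RH^2$, which follows from the perfect pairing, since Chow classes pairing to zero are zero.

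For degree $2g-4+n$, the hardest step, I would follow the Al-Aidroos strategy \cite{AA}. Using FZ relations alone, I would show that $\R^{2g-4+n}_{\FZ}$ is spanned by at most $\dim\R^1$ explicit classes, namely the duals of a basis of divisors, by reducing decorated trees in socle-minus-one degree inductively as in the socle case. This reduction step is the main obstacle; I would attempt it by extending the Graber--Vakil induction one degree down, keeping track of the one extra degree of freedom at a single vertex. Once $\dim\R^{2g-4+n}_{\FZ}\le\dim\R^1=r$ is established, the perfect pairing forces all three spaces to have dimension exactly $r$, and the maps between them are isomorphisms.
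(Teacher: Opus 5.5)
Your treatment of $i=0$, of $i=1$ (Gubarevich plus exactness of restriction in degree $1$, i.e.\ the Chow analogue of Lemma \ref{bartoct}; run in cohomology it gives $\R^1_{\FZ}=\RH^2$ directly, with no pairing needed), and of the socle degree (Theorem $\star$ for $\R_{\FZ}$ from \cite{KLLS}) matches the paper. In the socle case, Theorem \ref{theoremstar} already forces every generator to be a tree of trivalent genus-$0$ vertices with undecorated elliptic tails, so only WDVV is needed, not a further Graber--Vakil induction.

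\textbf{The gap: $i=2g-4+n$.} Granting Theorem \ref{socleminus1} supplies only the easy half. What remains is injectivity of $\R^{2g-4+n}_{\FZ}(\M^{\ct}_{g,n})\to\R^{2g-4+n}(\M^{\ct}_{g,n})$, i.e.\ that every nonzero $v\in\R^{2g-4+n}_{\FZ}$ pairs nontrivially with some $w\in\R^1$. This is exactly Proposition \ref{Rsminus1part}, and you offer only ``extend the Graber--Vakil induction one degree down.'' By Lemma \ref{theoremstarconsequence}, the spanning set consists of all trees with one special vertex of type $(0,4)$, $(1,2)$, $(2,0)$, or $(2,1)$ (the last with a degree-one decoration). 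That is far more classes than $\dim\R^1$, and you identify neither the $3$-spin relations that cut them down nor the explicit ``dual'' classes. Citing Theorem \ref{socleminus1} is also circular in context: the paper deduces it from perfectness of the pairing on $\R_{\FZ}$, which is precisely what you still owe.

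\textbf{The missing idea.} Avoid any explicit spanning set and prove perfectness of $\R^1_{\FZ}\times\R^{2g-4+n}_{\FZ}\to\qq$ by induction on $g$, using the elliptic-tail map $\varphi\colon\M^{\ct}_{g-1,n+1}\times\M^{\ct}_{1,1}\to\M^{\ct}_{g,n}$. Since $\varphi_*$ is an isomorphism on socles, push--pull gives $w\cdot\varphi_*\tilde v=0$ iff $\varphi^*w\cdot\tilde v=0$. For $g\ge 3$ the argument has three pieces. First, $\varphi^*$ is injective on $\R^1$ by Arbarello--Cornalba (Lemma \ref{pullbackinjective}), which gives Proposition \ref{R1part}. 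Second, Theorem $\star$ makes $\varphi_*$ surjective onto $\R^{2g-4+n}_{\FZ}$ (Lemma \ref{pushforwardsurjective}). Third, the complex $\R^1(\M^{\ct}_{g,n})\xrightarrow{\varphi^*}\R^1(\M^{\ct}_{g-1,n+1})\xrightarrow{\alpha^*-\beta^*}\R^1(\M^{\ct}_{g-2,n+2})$ is exact (Lemma \ref{cx}). Dualizing it with the inductive hypothesis yields $(\im\varphi^*)^\perp=\im(\alpha_*-\beta_*)\subset\ker\varphi_*$, so a $v$ orthogonal to all divisors vanishes.

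Genus $2$ needs separate treatment. There $\ker\varphi^*=\langle\lambda_1\rangle$ (Lemma \ref{g2ker}), and $\xi_*(\psi\otimes\mathrm{pt})$ is an extra generator not supported on an elliptic tail (Lemma \ref{g2pushforwardsurjective}); it is detected by pairing with $\lambda_1$. The cases $n\le 1$ are checked by computer. Once the FZ-level pairing is perfect, the equalities for $i=1$ and $i=2g-4+n$ follow simultaneously, and Theorem \ref{socleminus1} comes out as a corollary rather than an input.
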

The $i=0$ case is trivial, and the $i=1$ case follows quickly from Gubarevich's result (see Corollary \ref{ctdivisors}). For $i=2g-3+n$ and $2g-4+n$, the proof uses that the $3$-spin relations imply Graber and Vakil's Theorem $\star$ \cite{KLLS,GraberVakil}.

%We also show that the $3$-spin relations are complete in the cases when the tautological ring is Gorenstein.
%\begin{thm}\label{PixwhenGor}
 %If $g=0,1$ or $g\geq 2$, $2g+n<12$, and $(g,n)\neq (2,7)$ or $(3,5)$, then 
 %\[
% \R^*_{\FZ}(\M^{\ct}_{g,n}) = \R^*(\M^{\ct}_{g,n}) = \RH^{*}(\M^{\ct}_{g,n}).
% \]
%\end{thm}
%For small $g$ and $n$, it is often the case that 

%\begin{thm}\label{PixtonwhenGorenstein}
%        If $g=0,1$ or $2g+n<12$ and $(g,n)\neq (2,7), (3,5)$, then the $3$-spin relations for $\R^*(\M_{g,n}^{\ct})$ and $\RH^*(\M_{g,n}^{\ct})$ are complete.
%\end{thm}
%Because the $3$-spin relations restrict the the Faber--Zagier relations on $\M_g$, Conjecture \ref{Pixton} predicts that $\R^*(\M_{24})$ is not Gorenstein. 
Using computer calculations, Pixton showed Conjecture \ref{Pixton} implies that $\R^*(\M_6^{\ct})$ and $\R^*(\M_{5,2}^{\ct})$ are not Gorenstein \cite{Pixton}, which is now confirmed by Theorem \ref{Gortheorem}. As the number of marked points increases, the computations become significantly more difficult. %In particular, we do not know what Pixton's conjecture predicts for $\R^*(\M_{2,8}^{\ct})$. 
The next theorem provides the first cases where Pixton's conjecture holds, but the tautological ring is not Gorenstein.

\begin{thm}\label{Pixtontheorem}
    The $3$-spin relations are complete in  Chow and cohomology for $\M_{6}^{\ct}$, $\M_{5,2}^{\ct}$, and $\M_7^{\ct}$. In each case, the Gorenstein kernel is $1$-dimensional and occurs in degree $\lceil \frac{2g-3+n}{2} \rceil$. %The $3$-spin relations are complete for the $\ss_2\times\ss_2\times\ss_2$ invariant Chow and cohomology of $\M_{3,6}^{\ct}$.
\end{thm}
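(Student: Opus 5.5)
The approach is to squeeze the tautological ring between two computable objects: an upper bound given by the $3$-spin quotient $\R^*_{\FZ}(\M^{\ct}_{g,n})$, and a lower bound given by the Gorenstein quotient. Write $\R^*_{\mathrm{Gor}}(\M^{\ct}_{g,n})$ for the quotient of $\S^*(\M^{\ct}_{g,n})$ by the kernel of the $\lambda_g$-pairing \eqref{ctpairing}. Since $\FZ^*\subset \mathsf{I}_{\A}^*\subset\mathsf{I}_{\H}^*\subset \ker(\text{pairing})$, we have surjections
\[
\R^*_{\FZ}(\M^{\ct}_{g,n})\twoheadrightarrow \R^*(\M^{\ct}_{g,n})\twoheadrightarrow \RH^{2*}(\M^{\ct}_{g,n})\twoheadrightarrow \R^*_{\mathrm{Gor}}(\M^{\ct}_{g,n}).
\]
In each degree $i$, we would compute $\dim \R^i_{\FZ}$ (via \texttt{admcycles}) and $\dim\R^i_{\mathrm{Gor}}$ (the rank of the pairing matrix). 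In all degrees where these agree, Chow, cohomology and the $3$-spin quotient coincide. Theorem \ref{Pixtonminus1} already handles $i\in\{0,1,2g-4+n,2g-3+n\}$, and Theorem \ref{socleminus1} ensures that the Gorenstein rank there is the full rank.

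For $(g,n)=(6,0),(5,2),(7,0)$, we expect the two computations to agree in every degree except $d=\lceil\frac{2g-3+n}{2}\rceil$. In that degree the gap $\dim\R^d_{\FZ}-\dim\R^d_{\mathrm{Gor}}$ should be exactly $1$ (Pixton's computation). To close the gap, we need one extra input: a nonzero class in $\R^d(\M^{\ct}_{g,n})$, in fact nonzero in cohomology, that lies in the Gorenstein kernel. The proof of Theorem \ref{Gortheorem} produces exactly such a concrete class, and it is nonzero in $\RH$ since that theorem covers cohomology. Once this class $\alpha$ is in hand, we would check computationally that its lift in $\S^d$ is not in $\FZ^d+\ker(\text{pairing})\cap(\text{span of the }\FZ\text{-image})$; concretely, that its image spans the $1$-dimensional kernel of $\R^d_{\FZ}\to\R^d_{\mathrm{Gor}}$. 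This forces
\[
\dim\RH^{2d}\ge \dim\R^d_{\mathrm{Gor}}+1=\dim\R^d_{\FZ},
\]
so all the surjections are isomorphisms in degree $d$ as well. The same argument yields the statement that the Gorenstein kernel is $1$-dimensional and sits in degree $d$.

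The main obstacle is the nonvanishing of $\alpha$ in cohomology. Pairing arguments cannot detect it, since $\alpha$ pairs to zero with everything. Our first attempt would mirror Theorem \ref{Gortheorem}'s method: pull $\alpha$ back along a boundary gluing map $\xi_\Gamma$ of compact type (e.g.\ to $\M^{\ct}_{g_1,n_1+1}\times\M^{\ct}_{g_2,n_2+1}$ or to $\M_{2,8}^{\ct}$-type factors). There we would show that the pullback is nonzero via Künneth and the (perfect, by Theorem \ref{isGorenstein}) pairings on the factors, or via Petersen's non-Gorenstein classes in genus $2$. A secondary obstacle is purely computational, namely computing $\FZ^*(\M_7^{\ct})$ in middle degree. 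To make this feasible, we would restrict to the relevant graded pieces and exploit the $\lambda_g$-pairing to discard strata that vanish for dimension reasons.
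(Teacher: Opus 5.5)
Your squeeze between $\R^*_{\FZ}$ and the Gorenstein quotient settles every degree except $d$, as in the paper. The gap is at the step you flag yourself. For $\M_6^{\ct}$ and $\M_{5,2}^{\ct}$ you have no way to certify a class that is invisible yet nonzero in $\RH^{10}$.

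Invoking Theorem \ref{Gortheorem} is circular here. In the paper these two cases (Proposition \ref{5and6}) are \emph{deduced from} completeness of the $3$-spin relations in degree $5$ together with $\dim\R^4_{\FZ}<\dim\R^5_{\FZ}$. So there is no independently constructed invisible class to import.

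Your fallback of pulling back along gluing maps provably cannot work. Every vertex $v$ of a stable tree with at least one edge, of genus $6$ with no legs or of genus $5$ with two legs, satisfies $2g(v)+n(v)\leq 11$. No $\M^{\ct}_{2,7}$ or $\M^{\ct}_{2,8}$ factor occurs. Hence by Theorem \ref{isGorenstein} every boundary stratum has a Gorenstein tautological ring. If $\alpha$ is invisible, then for every tautological $\beta$ on the stratum, $\xi_*(\xi^*\alpha\cdot\beta)=\alpha\cdot\xi_*\beta=0$. Since $\xi_*$ is an isomorphism on socles, $\xi^*\alpha\cdot\beta=0$ for all $\beta$, and so $\xi^*\alpha=0$. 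Pairing on the factors is just the $\lambda_g$-pairing in disguise and detects nothing. Likewise, a computation in $\S^d$ only shows $\alpha\neq 0$ modulo $\FZ^d$, i.e.\ in $\R^d_{\FZ}$, which is not in question.

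The missing idea is to leave compact type. The paper proves completeness in degree $5$ on $\Mb_{g,n}$, where Poincar\'e duality does see these classes:
\begin{itemize}
\item The pairings $\RH^{10}(\Mb_6)\times\RH^{20}(\Mb_6)\to\qq$ and $\RH^{10}(\Mb_{5,2})\times\RH^{18}(\Mb_{5,2})\to\qq$ have ranks $988$ and $7147$. These match the $\FZ$ upper bounds, so the $3$-spin relations are complete on $\RH^{10}(\Mb_{g,n})$.
\item Since $\H^8(\Mb_{5,2})$ and $\H^8(\Mb_{4,4})$ are entirely tautological \cite[Theorem 1.4]{CL-CKgP}, the sequence $\RH^{8}(\Mb_{g-1,n+2})\to\RH^{10}(\Mb_{g,n})\to\RH^{10}(\M^{\ct}_{g,n})\to 0$ is exact (Lemma \ref{tautexact}).
\item Lemma \ref{bartoct} then transfers completeness to $\RH^{10}(\M^{\ct}_{g,n})$. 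Chow follows from $\FZ^*\subset\mathsf{I}^*_{\A}\subset\mathsf{I}^*_{\H}$.
\end{itemize}
Only after this is the non-Gorenstein property of $\M_6^{\ct}$ known.

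For $\M_7^{\ct}$ your scheme does go through, with the invisible class supplied by Lemma \ref{moregenus}. Pull back the degree-$5$ invisible class of $\M_6^{\ct}$ to $\M_{6,1}^{\ct}$ and push forward along the elliptic-tail gluing. Nonvanishing comes from the excess intersection formula for $\varphi^*\varphi_*$ (Lemma \ref{gluelem}), not from perfect pairings on the factors; indeed $\M^{\ct}_{6,1}$ is not Gorenstein. This class lies in degree $6$. It closes the gap between the pairing rank $277$ and the bound $\dim\R^6_{\FZ}(\M_7^{\ct})\leq 278$.
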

In Table \ref{Pixtontheoremtable} below, we record the ranks of the tautological groups for the moduli spaces in Theorem \ref{Pixtontheorem}. 

\begin{center}
\begin{table}[h]
\begin{tabular}{p{0.85cm}||p{0.85cm}|p{0.85cm}|p{0.85cm}|p{0.85cm}|p{0.85cm}|p{0.85cm}|p{0.85cm}|p{0.85cm}|p{0.85cm}|p{0.85cm}|p{0.85cm}|p{0.85cm}|} 
 $(g,n)$ & 0 & 1 & 2 & 3 & 4 & 5 & 6 & 7 & 8 & 9 & 10 & 11\\ 
 \hline
 $(5,2)$ & 1 & 12 & 82 & 314 & \textbf{636} & \textbf{637} & 314 & 82 & 12 & 1 & & \\
 \hline
 $(6,0)$ & 1 & 4 & 15 & 42 & \textbf{71} & \textbf{72} & 42 & 15 & 4 & 1 & & \\
 \hline
  $(7,0)$ & 1 & 4 & 20 & 69 & 171 & \textbf{277} & \textbf{278} & 171 & 69 & 20 & 4 & 1  \\
\end{tabular}

\caption{The ranks of $\R^i(\M_{g,n}^{\ct})\cong \RH^{2i}(\M_{g,n}^{\ct})$ for $(g,n)$ as in Theorem \ref{Pixtontheorem}, with the ranks breaking the symmetry marked in bold.}\label{Pixtontheoremtable}
\end{table}
\end{center}
\subsection{The kernel of the pairing and moduli of abelian varieties}
Let $\mathcal{A}_g$ denote the moduli space of principally polarized abelian $g$-folds, and let $p:\X_g\rightarrow \mathcal{A}_g$ be the universal abelian variety.  The lambda classes on $\mathcal{A}_g$ are defined as $\lambda_i = c_i(p_*\Omega_p)$, and the tautological ring $\R^*(\mathcal{A}_g)$ is the subring of the Chow ring of $\mathcal{A}_g$ generated by the lambda classes. 

The Torelli map $
\mathsf{Tor}:\M_g^{\ct}\rightarrow \mathcal{A}_g$
associates to a curve of compact type its Jacobian, which is the product of the Jacobians of its components. %We have 
%\[\mathsf{Tor}^*\lambda_i=\lambda_i\in \R^i(\M_g^{\ct}).\] 
The first interesting example of a non-tautological class in $\A^*(\mathcal{A}_g)$ was found recently \cite[Theorem 5]{COP}: for $g=6$, we have 
$$[\mathcal{A}_1\times \mathcal{A}_{5}]\notin \R^*(\mathcal{A}_6)\,.$$ %The proof crucially uses Theorem \ref{Pixtontheorem}, and provides a geometric explanation for the failure of the Gorenstein property in Theorem \ref{Gortheorem} when $g=6$ and $n=0$. 
First, by \cite[Proposition 2]{COP}, if $[\mathcal{A}_1\times \mathcal{A}_{g-1}]\in \R^*(\mathcal{A}_g)$, then
\[
[\mathcal{A}_1\times \mathcal{A}_{g-1}] = \frac{g}{6|B_{2g}|}\lambda_{g-1}.
\]
Therefore, if the class
\[
\Delta_g = [\mathcal{A}_1\times \mathcal{A}_{g-1}] - \frac{g}{6|B_{2g}|}\lambda_{g-1}
\]
is nonzero, then $[\mathcal{A}_1\times \mathcal{A}_{g-1}]$ is not tautological. If $\mathsf{Tor}^*\Delta_g\neq 0$, then $\Delta_g\neq 0$. However, by \cite[Theorem 4]{COP} the pullback $\mathsf{Tor}^*\Delta_g\in \R^{g-1}(\M_g^{\ct})$ lies in the kernel of the pairing \eqref{ctpairing}. Therefore, it is difficult to test whether $\mathsf{Tor}^*\Delta_g$ is nonzero. Theorem \ref{Pixtontheorem} provides a complete description of $\R^*(\M_6^{\ct})$, and is therefore an essential ingredient in the proof of \cite[Theorem 5]{COP}.
Using Theorem \ref{Pixtontheorem}, it is shown in \cite[Theorem 5]{COP} that $\mathsf{Tor}^*\Delta_6$ generates the $1$-dimensional kernel of the pairing 
\[
\R^{4}(\M_{6}^{\ct})\times \R^5(\M_6^{\ct})\rightarrow \qq.
\]
Surprisingly, $\mathsf{Tor}^*\Delta_7=0$ \cite[Proposition 7]{COP}, but it is suspected that $\mathsf{Tor}^*\Delta_g\neq 0$ for $g\geq 8$.

The fact that $[\mathcal{A}_1\times \mathcal{A}_5]$ is not tautological provides a geometric explanation for the failure of the Gorenstein property for $\R^*(\M_{6}^{\ct})$. An analogous geometric explanation for the failure of the Gorenstein property of $\R^*(\M_{g,n}^{\ct})$ when $g\geq 2$, $n>0$, and $2g+n=12$ has been pursued in \cite{10author}, where again Theorem \ref{Pixtontheorem} is a crucial ingredient.

\subsection*{Plan of the paper}
Sections \ref{yesgorensteinsection}, \ref{3spinbarct}, and \ref{socleminus1section} deal with the cases when the tautological ring is Gorenstein. In Section \ref{yesgorensteinsection}, we prove Theorem \ref{isGorenstein}. In Section \ref{3spinbarct}, we discuss the relationship between Pixton's Conjecture \ref{Pixton} for stable and compact type moduli. As a quick application, we prove the $i=1$ case of Theorem \ref{Pixtonminus1}. In Section \ref{socleminus1section}, we prove Theorems \ref{socleminus1} and \ref{Pixtonminus1} simultaneously. 

Sections \ref{prelim} and \ref{2gn12} deal with the failure of the Gorenstein property. In Section \ref{prelim}, we reduce Theorem \ref{Gortheorem} to the cases $g\geq 2$ and $2g+n=12$. In Section \ref{2gn12}, we study these cases, proving Theorem \ref{Pixtontheorem} and finishing the proof of Theorem \ref{Gortheorem}. 

Finally, in Section \ref{computationalaspects}, we give a more detailed description of the computer computations used throughout the paper.

\subsection*{Acknowledgments}
We thank Jonas Bergstr\"om, Carel Faber, Dragos Oprea, Rahul Pandharipande, Dan Petersen, Aaron Pixton for helpful conversations. We are grateful to Carel Faber for sharing his point counting data in genus $4$.
We also thank Charles Bouillaguet for advice on using the \texttt{SpaSM} library \cite{spasm} for Gaussian elimination modulo $p$.

Many of the computer checks in this paper were carried out on the servers of ETH Z\"urich and UZH Z\"urich. We thank the respective IT support groups for their help in facilitating these calculations.

\section{When the tautological ring is Gorenstein}\label{yesgorensteinsection}
In this section, assuming Theorem \ref{socleminus1}, we prove Theorem \ref{isGorenstein}. Theorem \ref{socleminus1} will be proven in Section \ref{socleminus1section}. 

The socle $\R^{2g-3+n}(\M_{g,n}^{\ct})$ is generated by the fundamental class of the locus parametrizing maximally degenerate $n+g$ pointed rational curves with $g$ elliptic tails attached \cite[Section 4.1.2]{FaberPandharipande}. That is, the socle is generated by the image of the point class on $\M^{\ct}_{0,n+g}$ under the composition
\[\R^{g+n-3}(\M^{\ct}_{0,n+g}) \to \R^{g+n-3}(\M^{\ct}_{0,n+g} \times (\M_{1,1}^{\ct})^{\times g}) \to \R^{2g + n - 3}(\M_{g,n}^{\ct}),\]
where the first map is the pullback along the projection and the second map is the pushforward along the gluing map.
This description of the socle holds in both Chow and cohomology, so the cycle class map in the socle degree
\begin{equation}\label{socledegreeiso}
c:\R^{2g-3+n}(\M_{g,n}^{\ct})\xrightarrow{\cong} \R\H^{4g-6+2n}(\M_{g,n}^{\ct})
\end{equation}
is an isomorphism. Using the isomorphism \eqref{socledegreeiso}, we see that the Gorenstein property in Chow and cohomology are closely related.

\begin{prop}\label{chowvscoh}
    If the pairing 
    \[
    \R^i(\M_{g,n}^{\ct})\times \R^{2g-3+n-i}(\M_{g,n}^{\ct})\rightarrow \qq
    \]
    is perfect, then so is the pairing
    \[
    \R\H^{2i}(\M_{g,n}^{\ct})\times \RH^{4g-6+2n-2i}(\M_{g,n}^{\ct})\rightarrow \qq.
    \]
\end{prop}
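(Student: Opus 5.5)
The plan is to compare the two pairings through the cycle class map. The cycle class map gives surjective ring homomorphisms
\[
c:\R^i(\M_{g,n}^{\ct})\twoheadrightarrow \RH^{2i}(\M_{g,n}^{\ct}),
\]
which are surjective by definition, since both tautological rings are images of the same strata algebra $\S^*(\M_{g,n}^{\ct})$ and the diagram commutes. The key compatibility is that the pairing \eqref{ctpairing} is given by $(\alpha,\beta)\mapsto\int_{\Mb_{g,n}}\bar\alpha\bar\beta\lambda_g$. Degree is compatible with the cycle class map on $\Mb_{g,n}$, so $\langle \alpha,\beta\rangle = \langle c(\alpha), c(\beta)\rangle$. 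Equivalently, the product $\R^i\times \R^{2g-3+n-i}\to \R^{2g-3+n}$ is compatible with products in cohomology, and the socle isomorphism \eqref{socledegreeiso} identifies the two target copies of $\qq$ compatibly.

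Now I would take perfectness in Chow as given and first show that $c$ is injective in degrees $i$ and $2g-3+n-i$. If $\alpha\in\R^i$ satisfies $c(\alpha)=0$, then for every $\beta\in \R^{2g-3+n-i}$ we get $c(\alpha\beta)=c(\alpha)c(\beta)=0$. Since $c$ is an isomorphism in the socle degree, $\alpha\beta=0$. Perfectness then forces $\alpha=0$. The same argument works in the complementary degree $2g-3+n-i$. Hence $c$ is an isomorphism in both degrees.

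Under these isomorphisms the cohomological pairing is identified with the Chow pairing, so it is perfect. The only point needing care is the compatibility of products and of the socle identification with $c$. This compatibility holds because $c$ is a ring homomorphism from $\A^*(\M^{\ct}_{g,n})$ to $\H^{2*}(\M^{\ct}_{g,n})$, so I do not expect a real obstacle.
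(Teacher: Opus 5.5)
Your proposal is correct and is essentially the paper's proof: both arguments combine surjectivity and multiplicativity of $c$ with the socle isomorphism \eqref{socledegreeiso} to show that a class killed by $c$ pairs to zero with everything, and so vanishes by perfectness in Chow. The paper argues by contradiction, saying that a non-perfect cohomological pairing forces a nonzero kernel of $c$ in degree $i$ or $2g-3+n-i$. You instead prove injectivity in both degrees directly and then transport the pairing, which just makes the same reasoning explicit.
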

\begin{proof}
    The proof is entirely analogous to \cite[Corollary 2.5]{PetersenTommasi}. To obtain a contradiction, assume the latter pairing is not perfect. Then the cycle class map $c:\R^*(\M_{g,n}^{\ct})\rightarrow \R\H^{2*}(\M_{g,n}^{\ct})$ is not an isomorphism in degree $i$ or $2g-3+n-i$. It is by definition surjective, so there must be an element $\alpha$ such that $c(\alpha)=0$. By assumption, there is an element $\beta\in \R^*(\M_{g,n}^{\ct})$ of complementary degree such that $\alpha\cdot \beta\neq 0$. Because $\alpha\cdot \beta$ is in the socle degree,
    \[
    0\neq c(\alpha\cdot\beta)=c(\alpha)\cdot c(\beta),
    \]
    contradicting $c(\alpha)=0$.
\end{proof}

\begin{proof}[Proof of Theorem \ref{isGorenstein}, assuming Theorem \ref{socleminus1}]
By Proposition \ref{chowvscoh}, we can work in the tautological Chow ring $\R^*(\M_{g,n}^{\ct})$. We may also assume $g\geq 2$, as Theorem \ref{isGorenstein} holds when $g=0,1$ by \cite{Keel,Tavakol}. 

Let $g\geq 2$, $2g+n<12$, and $(g,n)\neq (2,7)$. We need to show that the pairings
\[
\R^i(\M_{g,n}^{\ct})\times \R^{2g-3+n-i}(\M_{g,n}^{\ct})\rightarrow \qq
\]
are perfect. %for $0\leq i \leq \lfloor\frac{2g-3+n}{2}\rfloor$. 
For $i=0$, the pairing is perfect because we know $\R^{2g - 3 + n}(\M_{g,n}^{\ct}) \cong \qq$. For $i = 1$, the result follows from Theorem \ref{socleminus1}. We now assume $i \geq 2$.

Using \texttt{admcycles}, we compute a matrix $M^{\FZ,i}(\M_{g,n}^{\ct})$ whose columns are indexed by the decorated graph generators for $\S^i(\M_{g,n}^{\ct})$ and whose rows correspond to elements of a generating set for $\FZ^i(\M_{g,n}^{\ct})$. We reduce the entries of the matrix modulo $p$ for some prime $p$, obtaining a matrix $M^{\FZ,i}_{\mathbb{F}_p}(\M_{g,n}^{\ct})$. Here, we only use that the denominators of the entries in $M^{\FZ,i}(\M_{g,n}^{\ct})$ are not divisible by $p$. We have 
\[
\rank M^{\FZ,i}_{\mathbb{F}_p}(\M_{g,n}^{\ct}) \leq \rank M^{\FZ,i}(\M_{g,n}^{\ct}).
\]
%\hannah{could I also just think of scaling the relations so that the matrix has no denominators and then the above is a true statement for any $p$? (it just would have much less hope of being an equality if $p$ had divided the denominators)}
We compute $\rank M^{\FZ,i}_{\mathbb{F}_p}(\M_{g,n}^{\ct})$. Then
\[
\dim \S^i(\M_{g,n}^{\ct})-\rank M^{\FZ,i}_{\mathbb{F}_p}(\M_{g,n}^{\ct})
\]
is an upper bound for the dimension of $\R^i(\M_{g,n}^{\ct})$.\footnote{When $g=2$, we calculate the upper bound for the dimension of $\R^i(\M_{2,n}^{\ct})$ only when $i\leq \lfloor\frac{2g-3+n}{2}\rfloor$ because the tautological Betti numbers are known to be symmetric when $n\leq 7$ \cite[Theorem 3.6]{Petersen}.}

Next, we bound from below the ranks of the pairings
\[
\R^i(\M_{g,n}^{\ct})\times \R^{2g-3+n-i}(\M_{g,n}^{\ct})\rightarrow \qq,
\]
again using \texttt{admcycles}.
In each case, the upper bound and lower bound agree. The results are recorded in Table \ref{Gortable}. See Section \ref{computationalaspects} for more details on the computer implementation.
\end{proof}
\section{The \texorpdfstring{$3$}{3}-spin relations on \texorpdfstring{$\Mb_{g,n}$}{Mbar\_gn} and \texorpdfstring{$\M_{g,n}^{\ct}$}{M\_gn\^ct} }\label{3spinbarct}
In this section, we give a method for proving the $3$-spin relations are complete for $\M_{g,n}^{\ct}$. The method depends on the completeness of the $3$-spin relations for the cohomology of $\Mb_{g,n}$ and  $\Mb_{g-1,n+2}$.
\begin{lem}\label{tautexact}
Suppose that $\H^{2k-2}(\Mb_{g-1,n+2})=\R\H^{2k-2}(\Mb_{g-1,n+2})$. Then the sequence
\[
\R\H^{2k-2}(\Mb_{g-1,n+2})\rightarrow \R\H^{2k}(\Mb_{g,n})\rightarrow \R\H^{2k}(\M_{g,n}^{\ct})\rightarrow 0
\]
is exact.
\end{lem}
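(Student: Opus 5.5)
The plan is to compare the tautological sequence with the long exact sequence of the pair in cohomology. Set $U=\M_{g,n}^{\ct}$, which is open in $\Mb_{g,n}$, and let $Z=\Mb_{g,n}\smallsetminus \M_{g,n}^{\ct}$ be its closed complement. By definition, $Z$ is the image of the gluing map $\xi_{\mathsf{loop}}:\Mb_{g-1,n+2}\to \Mb_{g,n}$ (the locus of curves with a non-separating node). For Borel--Moore homology there is the exact sequence
\[
\H^{BM}_{2d-2k}(Z)\rightarrow \H^{2k}(\Mb_{g,n})\rightarrow \H^{2k}(\M_{g,n}^{\ct})\rightarrow \cdots,
\]
where $d=3g-3+n$ and we use Poincaré duality on the smooth orbifolds $\Mb_{g,n}$ and $\M^{\ct}_{g,n}$. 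The map $\xi_{\mathsf{loop}}$ is proper and surjective onto $Z$, and we work with rational coefficients. Hence pushforward $\H^{BM}_*(\Mb_{g-1,n+2})\to \H^{BM}_*(Z)$ is surjective in top weight, i.e.\ on the image of $\H^{BM}_{2d-2k}(Z)$ in $\H^{2k}(\Mb_{g,n})$, by the standard weight argument. Concretely, Deligne's theory shows that the image of $\H^{BM}_*(Z)\to \H^*(\Mb_{g,n})$ equals the image of $\H^{BM}_*(\tilde Z)$ for any resolution $\tilde Z\to Z$. The natural resolution $\Mb_{g-1,n+2}\to Z$ factors through the normalization, since it is finite onto its image. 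So the kernel of restriction $\H^{2k}(\Mb_{g,n})\to \H^{2k}(\M_{g,n}^{\ct})$ equals the image of the Gysin pushforward $\xi_{\mathsf{loop}*}:\H^{2k-2}(\Mb_{g-1,n+2})\to \H^{2k}(\Mb_{g,n})$. The target degree is right because $Z$ has codimension one.

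Next I restrict to tautological classes. Exactness on the right, $\R\H^{2k}(\Mb_{g,n})\to \R\H^{2k}(\M_{g,n}^{\ct})\to 0$, holds by definition: both are images of strata algebras, and $\S^*(\Mb_{g,n})\to \S^*(\M_{g,n}^{\ct})$ is surjective. The composite $\R\H^{2k-2}(\Mb_{g-1,n+2})\to \R\H^{2k}(\Mb_{g,n})\to \R\H^{2k}(\M^{\ct}_{g,n})$ is zero, because $\xi_{\mathsf{loop}*}$ sends tautological classes to tautological classes supported on $Z$.

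For exactness in the middle, let $\alpha\in \R\H^{2k}(\Mb_{g,n})$ restrict to zero on $\M_{g,n}^{\ct}$. By the first step, $\alpha=\xi_{\mathsf{loop}*}\beta$ for some $\beta\in \H^{2k-2}(\Mb_{g-1,n+2})$. The hypothesis $\H^{2k-2}(\Mb_{g-1,n+2})=\R\H^{2k-2}(\Mb_{g-1,n+2})$ then gives that $\beta$ is tautological, which finishes the proof.

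The main obstacle is the first step: justifying rigorously that the kernel of restriction is exactly the image of pushforward from $\Mb_{g-1,n+2}$ in cohomology. I would cite the weight argument, for instance as in Petersen's work on compact type or \cite[Section 2]{PetersenTommasi}. There, the kernel of restriction to an open set is the image of pushforward from any proper surjection from a smooth proper (orbifold) variety onto the complement; this uses purity of $\H^*(\Mb_{g,n})$ and the fact that the lowest-weight part of $\H^{BM}_*(Z)$ is hit by the resolution.
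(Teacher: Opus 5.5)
Your proposal is correct and follows essentially the same route as the paper. The paper's proof is a diagram chase comparing the tautological sequence with the exact row $\H^{2k-2}(\Mb_{g-1,n+2})\to \H^{2k}(\Mb_{g,n})\to W_{2k}\H^{2k}(\M^{\ct}_{g,n})\to 0$ coming from the long exact sequence, and your middle-exactness step is exactly that chase. The difference is that you spell out the weight and strictness argument identifying the kernel of restriction with the image of $\xi_{\mathsf{loop}*}$, which the paper leaves implicit. Calling $\Mb_{g-1,n+2}\to Z$ a ``resolution'' is slightly loose, but the argument only needs a proper surjection from a proper space with pure cohomology, which this map is.
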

\begin{proof}
By definition, the restriction map 
\[
\R\H^{2k}(\Mb_{g,n})\rightarrow \R\H^{2k}(\M_{g,n}^{\ct})
\]
is surjective, and the pushforward map $\H^{2k-2}(\Mb_{g-1,n+2})\rightarrow \H^{2k}(\Mb_{g,n})$ sends tautological classes to tautological classes. Therefore, we have a commutative diagram %\hannah{you want the upper left to be $\H^{2k-2}$ right?}
\[
\begin{tikzcd}
{\H^{2k-2}(\Mb_{g-1,n+2})} \arrow[equal]{d} \arrow[r] & {\H^{2k}(\Mb_{g,n})} \arrow[r]                  & {W_{2k} \H^{2k}(\M^{\ct}_{g,n})} \arrow[r]           & 0 \\
{\R\H^{2k-2}(\Mb_{g-1,n+2})} \arrow[r]                                & {\R\H^{2k}(\Mb_{g,n})} \arrow[u, hook] \arrow[r] & {\R\H^{2k}(\M^{\ct}_{g,n})} \arrow[u, hook] \arrow[r] & 0 \, ,
\end{tikzcd}
\]
where the top row, coming from the long exact sequence in cohomology, is exact. A diagram chase shows that the bottom row is exact as well.
\end{proof}

\begin{lem}\label{bartoct}
    Suppose that $\H^{2k-2}(\Mb_{g-1,n+2})=\R\H^{2k-2}(\Mb_{g-1,n+2})$. If the $3$-spin relations are complete for $\R\H^{2k}(\Mb_{g,n})$, then they are complete for $\R\H^{2k}(\M_{g,n}^{\ct})$.
\end{lem}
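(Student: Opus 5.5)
The plan is a diagram chase at the level of strata algebras. It combines Lemma \ref{tautexact} with the fact that $3$-spin relations are compatible with the gluing map $\xi_{\mathsf{loop}}$ and with restriction to compact type.

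First consider the commutative diagram
\[
\begin{tikzcd}
\S^{k-1}(\Mb_{g-1,n+2}) \arrow[r,"\xi_{\mathsf{loop}}"] \arrow[d] & \S^k(\Mb_{g,n}) \arrow[r] \arrow[d] & \S^k(\M^{\ct}_{g,n}) \arrow[r] \arrow[d] & 0\\
\RH^{2k-2}(\Mb_{g-1,n+2}) \arrow[r] & \RH^{2k}(\Mb_{g,n}) \arrow[r] & \RH^{2k}(\M^{\ct}_{g,n}) \arrow[r] & 0.
\end{tikzcd}
\]
The top row is exact by the definition of $\S^*(\M_{g,n}^{\ct})$ as the cokernel of $\xi_{\mathsf{loop}}$. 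The bottom row is exact by Lemma \ref{tautexact}, using the hypothesis $\H^{2k-2}(\Mb_{g-1,n+2})=\RH^{2k-2}(\Mb_{g-1,n+2})$. The vertical maps are surjective, and their kernels are $\mathsf{I}_{\H}$.

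Now take $x\in \mathsf{I}^k_{\H}(\M^{\ct}_{g,n})$. We need to show $x\in \FZ^k(\M^{\ct}_{g,n})$, the image of $\FZ^k(\Mb_{g,n})$.
\begin{enumerate}
\item Lift $x$ to $\tilde x\in \S^k(\Mb_{g,n})$. Its image in $\RH^{2k}(\Mb_{g,n})$ restricts to zero on compact type.
\item By exactness of the bottom row, that image equals the image of some class $y\in\RH^{2k-2}(\Mb_{g-1,n+2})$.
\item Lift $y$ to $\tilde y\in\S^{k-1}(\Mb_{g-1,n+2})$. This is possible because the vertical map is surjective.
\item Then $\tilde x-\xi_{\mathsf{loop}}(\tilde y)$ lies in $\mathsf{I}^k_{\H}(\Mb_{g,n})$. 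By the completeness hypothesis this equals $\FZ^k(\Mb_{g,n})$, which makes sense since $\FZ\subset \mathsf{I}_{\H}$ by \cite{PandharipandePixtonZvonkine}.
\item Project to $\S^k(\M^{\ct}_{g,n})$. The term $\xi_{\mathsf{loop}}(\tilde y)$ dies, so $x$ is the image of an element of $\FZ^k(\Mb_{g,n})$, i.e. $x\in\FZ^k(\M^{\ct}_{g,n})$.
\end{enumerate}

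No completeness of $3$-spin relations on $\Mb_{g-1,n+2}$ is needed. The only input is that $\xi_{\mathsf{loop}}$ on strata descends to the pushforward on cohomology, which is the standard compatibility of strata algebra classes with gluing pushforwards.

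I expect the main subtlety to be bookkeeping rather than substance. One must check that $\FZ^*(\M^{\ct}_{g,n})$ is defined as the image of $\FZ^*(\Mb_{g,n})$ under $\S^*(\Mb_{g,n})\to\S^*(\M^{\ct}_{g,n})$, which the introduction indicates ("by restriction"). One must also check that the left square commutes on the nose.
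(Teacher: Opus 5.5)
Your proposal is correct and is essentially the paper's argument: both combine Lemma~\ref{tautexact}, the surjectivity of $\S^{k-1}(\Mb_{g-1,n+2})\to\RH^{2k-2}(\Mb_{g-1,n+2})$, and the completeness hypothesis on $\Mb_{g,n}$. The only difference is presentation: the paper records the chase as the chain of identifications $\RH^{2k}(\M^{\ct}_{g,n})=\S^k(\Mb_{g,n})/(\FZ^k(\Mb_{g,n})+\S^{k-1}(\Mb_{g-1,n+2}))=\S^k(\M^{\ct}_{g,n})/\FZ^k(\M^{\ct}_{g,n})$, whereas you chase elements explicitly.
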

\begin{proof}
By Lemma \ref{tautexact}, the top row of the diagram below
\begin{center}
\begin{tikzcd}
\R\H^{2k-2}(\Mb_{g-1,n+2}) \arrow{r} & \R\H^{2k}(\Mb_{g,n}) \arrow{r} & \R\H^{2k}(\M_{g,n}^{\ct})\arrow{r} & 0 \\
\S^{k-1}(\Mb_{g-1,n+2}) \arrow[two heads]{u} \arrow{r} & \S^k(\Mb_{g,n}) \arrow[two heads]{u}
\end{tikzcd}
\end{center}
is exact. Hence, $\R\H^{2k}(\M_{g,n}^{\ct})=\R\H^{2k}(\Mb_{g,n})/\S^{k-1}(\Mb_{g-1,n+2})$. By assumption, $\R\H^{2k}(\Mb_{g,n})=\S^{k}(\Mb_{g,n})/\mathsf{FZ}^{k}(\Mb_{g,n})$. %where $\mathsf{FZ}$ denotes the ideal of $3$-spin relations. 
We have
\begin{equation*}
    \begin{split}
        \R\H^{2k}(\M_{g,n}^{\ct})&=\S^{k}(\Mb_{g,n})/(\mathsf{FZ}^{k}(\Mb_{g,n})+\S^{k-1}(\Mb_{g-1,n+2}))\\
        % &= \big(\S^{k}(\Mb_{g,n})/\S^{k-1}(\Mb_{g-1,n+2})\big) /\big(\mathsf{FZ}^{k}(\Mb_{g,n})/[\S^{k-1}(\Mb_{g-1,n+2})\cap \mathsf{FZ}^{k}(\Mb_{g,n})]\big)\\
        &= \big(\S^{k}(\Mb_{g,n})/\S^{k-1}(\Mb_{g-1,n+2})\big) /\big(\mathrm{im}(\mathsf{FZ}^{k}(\Mb_{g,n}))\big)\\
        &= \S^{k}(\M_{g,n}^{\ct})/\mathsf{FZ}^{k}(\M^{\ct}_{g,n}),
    \end{split}
\end{equation*}
showing the $3$-spin relations are complete. Here $\mathrm{im}(\mathsf{FZ}^{k}(\Mb_{g,n}))$ is the image of $\mathsf{FZ}^{k}(\Mb_{g,n})$ in the quotient ring $\S^{k}(\Mb_{g,n})/\S^{k-1}(\Mb_{g-1,n+2})$, which itself is canonically isomorphic to the space $\S^{k}(\M_{g,n}^{\ct})$ of decorated strata of the compact-type moduli space.
%\hannah{small comment: above, we are writing $\S^{k-1}$ as an abbreviation for the image of $S^{k-1}$ under various maps (which are probably not injective)}
\end{proof}
\begin{rem}
    One can also prove versions of Lemmas \ref{tautexact} and \ref{bartoct} in the Chow ring, using the excision exact sequence for Chow groups instead of the long exact sequence in cohomology.
\end{rem}
 Using Lemma \ref{bartoct}, we show the $3$-spin relations are complete for $\M_{1,n}^{\ct}$ and that the $3$-spin relations are complete in codimension $1$.
\begin{cor}\label{genus13spin}
    The $3$-spin relations are complete in Chow and cohomology for $\M_{0,n}^{\ct}$ and $\M_{1,n}^{\ct}$.
\end{cor}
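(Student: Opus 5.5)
The plan is to feed the known genus $0$ and genus $1$ results into Lemma \ref{bartoct}, and then pass from cohomology to Chow.

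Genus $0$ is immediate. Since $\M_{0,n}^{\ct}=\Mb_{0,n}$, there is no loop map and $\S^*(\M_{0,n}^{\ct})=\S^*(\Mb_{0,n})$. Keel \cite{Keel} shows that $\A^*(\Mb_{0,n})=\H^{2*}(\Mb_{0,n})=\R^*(\Mb_{0,n})$, with all relations generated by the WDVV (Keel) relations. The $3$-spin relations contain the WDVV relations; these arise as pullbacks of the basic boundary relation on $\Mb_{0,4}$, and Pixton's system includes them. Hence $\FZ^*(\Mb_{0,n})$ is the full relation ideal, and the genus $0$ case of Pixton's conjecture holds in both Chow and cohomology.

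For genus $1$, I would apply Lemma \ref{bartoct} with $g=1$. Its hypothesis is $\H^{2k-2}(\Mb_{0,n+2})=\R\H^{2k-2}(\Mb_{0,n+2})$, which holds for every $k$ by Keel. It then remains to know that the $3$-spin relations are complete for $\R\H^{2k}(\Mb_{1,n})$ in all degrees. I would cite Petersen \cite{Petersengenus1}: the tautological ring of $\Mb_{1,n}$ is Gorenstein, and $\R^*(\Mb_{1,n})\cong\RH^*(\Mb_{1,n})$ with relations generated by Getzler's relation and the genus $0$ WDVV relations, pulled back and pushed forward along tautological maps. Both of these relation families are known to lie in the span of the $3$-spin relations (see \cite{Pixton,Jandarelations}), and $\FZ$ is closed under these operations. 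Therefore the $3$-spin relations are complete for $\Mb_{1,n}$. Lemma \ref{bartoct} then gives completeness in cohomology for $\M_{1,n}^{\ct}$ in every degree $k$.

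To pass to Chow, I would compare the surjections
\[
\S^*(\M_{1,n}^{\ct})/\FZ^*\twoheadrightarrow \R^*(\M_{1,n}^{\ct})\twoheadrightarrow \RH^{2*}(\M_{1,n}^{\ct}).
\]
The first map is surjective because $\FZ\subset \mathsf{I}_\A$ by Janda. Their composite is an isomorphism by the cohomological statement, so both maps are isomorphisms.

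The main obstacle is justifying that the $3$-spin relations generate all relations on $\Mb_{1,n}$, i.e.\ that Getzler's relation lies in $\FZ$. I would cite this from the literature, or verify it by direct computation in \texttt{admcycles} on $\Mb_{1,4}$. Alternatively, one could use Tavakol's presentation of $\R^*(\M_{1,n}^{\ct})$ \cite{Tavakol} directly and check that his defining relations lie in $\FZ^*(\M_{1,n}^{\ct})$.
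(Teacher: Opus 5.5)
Your proposal is correct and is essentially the paper's proof. Genus $0$ follows from Keel together with the fact that WDVV lies in $\FZ$; genus $1$ follows from Lemma \ref{bartoct}, using Keel's result for $\Mb_{0,n+2}$ and Petersen's theorem that WDVV and Getzler's relation generate the relations on $\Mb_{1,n}$, both of which are known to lie in $\FZ$. Your composite-of-surjections argument for Chow, using $\FZ\subset\mathsf{I}_{\A}$, simply spells out the paper's remark that there can be no more relations in Chow than in cohomology.
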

\begin{proof}
It suffices to prove the result in cohomology, as there can be no more relations in Chow than in cohomology.
The genus $0$ case is well-known. Indeed, by \cite{Keel}, the ideal $\mathsf{I}_{\H}(\Mb_{0,n})$ is generated by the WDVV relations, which are contained in $\FZ^*(\Mb_{0,n})$ \cite[Section 3.6]{PandharipandePixtonZvonkine}.

    When $g=1$, we use Lemma \ref{bartoct}. By \cite{Keel}, we have the equality $\H^*(\Mb_{0,n})=\RH^*(\Mb_{0,n})$. Moreover, the ideal of relations $\mathsf{I}_{\H}^*(\Mb_{1,n})$ is generated by the WDVV and Getzler relations \cite{Petersengenus1}. Both of these relations are known to be contained in $\FZ^*(\Mb_{1,n})$, see \cite[Section 3.6]{PandharipandePixtonZvonkine} and \cite[Section 4.6]{calculus} or \cite[p. 87]{Pixton}. Applying Lemma \ref{bartoct} yields the statement.
\end{proof}

\begin{cor}\label{ctdivisors}
    The $3$-spin relations are complete for $\R^1(\M_{g,n}^{\ct})$ and $\RH^2(\M_{g,n}^{\ct})$.
\end{cor}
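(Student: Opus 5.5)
We apply Lemma \ref{bartoct} with $k=1$. It suffices to treat cohomology. In Chow we then have surjections
\[
\R^1_{\FZ}(\M_{g,n}^{\ct})\twoheadrightarrow \R^1(\M_{g,n}^{\ct})\twoheadrightarrow \RH^2(\M_{g,n}^{\ct}).
\]
The first map is surjective because $\FZ^*\subset \mathsf{I}_{\A}^*$ by Janda. The second is surjective by definition of $\RH$. If the composite is an isomorphism, then both maps are isomorphisms, so completeness in cohomology forces completeness in Chow.

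Lemma \ref{bartoct} needs two inputs. The first is $\H^{0}(\Mb_{g-1,n+2})=\RH^{0}(\Mb_{g-1,n+2})$. This is automatic: $\Mb_{g-1,n+2}$ is irreducible, so $\H^0$ is one-dimensional and spanned by the fundamental class, which is tautological. (When $g\ge 1$ we need $\Mb_{g-1,n+2}$ to be nonempty, i.e.\ $2(g-1)-2+n+2>0$. The degenerate cases, such as $(g,n)=(1,0)$, are excluded or trivial; genus $0$ has no loop map and is handled by Corollary \ref{genus13spin}.) The second input is that the $3$-spin relations are complete for $\RH^2(\Mb_{g,n})$. This is exactly Gubarevich's theorem $\R^1_{\FZ}(\Mb_{g,n})=\R^1(\Mb_{g,n})=\RH^2(\Mb_{g,n})$.

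With both hypotheses satisfied, Lemma \ref{bartoct} gives $\RH^2(\M_{g,n}^{\ct})=\S^1(\M_{g,n}^{\ct})/\FZ^1(\M_{g,n}^{\ct})$, which is completeness in cohomology. The Chow statement then follows from the sandwich above.

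I expect no serious obstacle. The one point to check is that the quotient in Lemma \ref{bartoct} is compatible with restriction of $\FZ$. This holds because $\FZ^*(\M_{g,n}^{\ct})$ is defined as the image of $\FZ^*(\Mb_{g,n})$ under the quotient map $\S^*(\Mb_{g,n})\to\S^*(\M_{g,n}^{\ct})$.
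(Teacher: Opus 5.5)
Your proposal is correct and matches the paper's proof: Gubarevich's theorem together with the fact that the fundamental class $[\Mb_{g-1,n+2}]$ spans $\H^0(\Mb_{g-1,n+2})$ are fed into Lemma \ref{bartoct} with $k=1$. Your sandwich argument deducing the Chow statement from the cohomological one is the reduction the paper leaves implicit here and states explicitly in the proof of Corollary \ref{genus13spin}.
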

\begin{proof}
    By \cite{Gubarevich}, the $3$-spin relations are complete for $\RH^2(\Mb_{g,n})$. The fundamental class $[\Mb_{g-1,n+2}]$ is tautological by definition, and so we may apply Lemma \ref{bartoct}.
\end{proof}

\section{Proof of Theorems \ref{socleminus1} and \ref{Pixtonminus1}}\label{socleminus1section}
\subsection{Overview}
In this section, we prove Theorems \ref{socleminus1} and \ref{Pixtonminus1}. A key tool is the following result.
\begin{thm}\label{theoremstar}
    Any $\alpha \in \R^d_{\FZ}(\Mb_{g,n})$ is a linear combination of decorated strata classes $[\Gamma,\gamma]$ such that $\Gamma$ has at least $d-g+1$ genus $0$ vertices.
\end{thm}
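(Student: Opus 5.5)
The plan is to reproduce the Graber--Vakil proof of Theorem $\star$ \cite{GraberVakil}, but inside $\R^*_{\FZ}(\Mb_{g,n})$. Every relation used must come from $\FZ^*$. The argument only needs two inputs:
\begin{enumerate}
\item[(a)] on $\Mb_{0,m}$, every $\psi$--$\kappa$ monomial of degree $e$ equals, modulo $\FZ^e(\Mb_{0,m})$, a combination of undecorated boundary strata with $e$ edges;
\item[(b)] for $h\geq 1$, every $\psi$--$\kappa$ monomial of degree $e\geq h$ on $\Mb_{h,m}$ equals, modulo $\FZ^e(\Mb_{h,m})$, a combination of decorated strata classes $[\Gamma',\gamma']$ with $|E(\Gamma')|\geq 1$.
\end{enumerate}
Input (a) follows from Keel \cite{Keel}, since $\R^*(\Mb_{0,m})=\H^*(\Mb_{0,m})$ is spanned by boundary strata and all relations are WDVV. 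The WDVV relations lie in $\FZ^*$ \cite[Section 3.6]{PandharipandePixtonZvonkine}. Input (b) is a Looijenga/Ionel-type vanishing $\R^{\geq h}(\M_{h,m})=0$, lifted to $\FZ$-relations with boundary corrections. That this lift exists is exactly the content of \cite{KLLS}. I would quote it in this form: Pixton's relations restricted to the interior contain all psi--kappa monomials of degree $\geq h$.

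Given (a) and (b), take any generator $[\Gamma,\gamma]$ of $\S^d(\Mb_{g,n})$ and rewrite it by two moves.
\begin{itemize}
\item If some vertex $v$ with $g(v)\geq1$ carries a decoration of degree $\geq g(v)$, apply (b) at $v$ and glue back. Up to $\FZ$, this replaces $[\Gamma,\gamma]$ by strata with strictly more edges.
\item If some genus $0$ vertex carries a nonzero decoration, apply (a) at that vertex. This again strictly increases the number of edges.
\end{itemize}
Both moves are legitimate in $\R^*_{\FZ}$ because $\FZ^*$ is compatible with gluing (pushforward along $\xi_\Gamma$) and with products of classes from other vertices. The number of edges of a stable graph of genus $g$ with $n$ legs is bounded by $3g-3+n$, so the process terminates. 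We end with a combination of strata in which genus $0$ vertices are undecorated and each positive-genus vertex $v$ has decoration degree $\leq g(v)-1$.

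For such a stratum, write $V_0$ for the set of genus $0$ vertices and $V_+$ for the set of positive-genus vertices. Then
\[
d=|E|+\sum_{v\in V_+}\deg\gamma_v\leq |E|+\sum_{v\in V_+}(g(v)-1)
\]
and
\[
g=|E|-|V|+1+\sum_{v} g(v).
\]
Subtracting gives $d-g+1\leq |V|-|V_+|=|V_0|$, which is the claim.

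The main obstacle is input (b): making precise that the $3$-spin relations, and not merely true cohomological relations, express high-degree interior monomials through boundary terms. I would handle it by citing \cite{KLLS}, which shows that the $3$-spin relations imply the relations Graber--Vakil use. The rest of the argument is the combinatorial bookkeeping above.
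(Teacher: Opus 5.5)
Your proposal is correct and follows essentially the paper's route. The paper gives no independent argument: it cites Graber--Vakil for the Chow statement and \cite[Proposition 5.7 and Corollary 5.9]{KLLS} for the $\R_{\FZ}$ version, and your input (b) is a special case of that cited result. Your deduction of Theorem~$\star$ from (b) plus genus-$0$ completeness is the standard one and checks out: the rewriting terminates because the edge count strictly increases and is bounded, and the count gives $d-g+1\le |V_0|$.
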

Theorem \ref{theoremstar} is often called Theorem $\star$, and was proven in $\R^d(\Mb_{g,n})$ by Graber and Vakil \cite{GraberVakil}. The stronger statement that Theorem $\star$ holds in $\R^d_{\FZ}(\Mb_{g,n})$ was proven recently by Kramer, Labib, Lewanski, and Shadrin \cite[Proposition 5.7 and Corollary 5.9]{KLLS}. Theorem \ref{Pixtonminus1} for $i=2g-3+n$ follows quickly from Theorem \ref{theoremstar} (see Section \ref{2g3n}).

Theorem \ref{Pixtonminus1} for $i=2g-4+n$ and Theorem \ref{socleminus1} will be proven simultaneously. We will show that the pairing
\begin{equation}  \label{eqn:R1FZ_pairing_is_perfect}
\R^1_{\FZ}(\M_{g,n}^{\ct})\times \R_{\FZ}^{2g-4+n}(\M_{g,n}^{\ct})\rightarrow \R^{2g-3+n}_{\FZ}(\M_{g,n}^{\ct})\cong \qq
\end{equation}
is perfect. The perfectness of the pairing forbids further relations, proving Theorem \ref{Pixtonminus1}. Therefore, the pairing 
\[
\R^1(\M_{g,n}^{\ct})\times \R^{2g-4+n}(\M_{g,n}^{\ct})\rightarrow \qq
\]
is perfect, proving Theorem \ref{socleminus1}.
\subsection{Proof of Theorem \ref{Pixtonminus1} for \texorpdfstring{$i=2g-3+n$}{i=2g-3+n}}\label{2g3n}
We follow \cite[Section 5.6]{GraberVakil}. For $(g,n)=(2,0)$ we have $2g-3+n=1$, so the result follows from Corollary \ref{ctdivisors}. For $(g,n)\neq (2,0)$, any stable graph $\Gamma$ without loops has at most $g-2+n$ genus $0$ vertices. By Theorem \ref{theoremstar}, any generator of $\R_{\FZ}^{2g-3+n}(\M_{g,n}^{\ct})$ thus has exactly $g-2+n$ genus $0$ vertices. Moreover, each genus $0$ vertex must be trivalent, and all other vertices are genus $1$ leaves. There can be no $\kappa$ or $\psi$ decorations. Repeatedly applying the WDVV relation on $\Mb_{0,4}$, which is a $3$-spin relation, we see any two such strata are equivalent in $\R^{2g-3+n}_{\FZ}(\M_{g,n}^{\ct})$. 
\qed
%that the pairings involving codimension $1$ classes are perfect.
%The reader who is only interested in the \emph{failure} of the Gorenstein conjecture does not need to read this section.
\subsection{Proof of Theorem \ref{Pixtonminus1} for \texorpdfstring{$i=2g-4+n$}{i=2g-4+n} and Proof of Theorem \ref{socleminus1}}
We prove that the pairing \eqref{eqn:R1FZ_pairing_is_perfect} is perfect by induction on $g$, using the map $\varphi: \M_{g-1,n+1}^{\ct} \times \M^{\ct}_{1,1} \to \M_{g,n}^{\ct}$, gluing an elliptic tail to the last marked point. The base cases for this induction are that the pairing
\[\R_{\FZ}^{1}(\M_{g,n}^{\ct}) \times \R_{\FZ}^{2g+n-4}(\M_{g,n}^{\ct}) \to \qq\]
is perfect when $g = 0$ \cite{Keel} and when $g = 1$ \cite{Tavakol} and the $3$-spin relations are complete when $g=0,1$ by Corollary \ref{genus13spin}. Note that by the geometric description of the socle as in Section \ref{yesgorensteinsection}, the map
\[
\varphi_*:\R_{\FZ}^{2g - 4 + n}(\M_{g-1,n+1}^{\ct}) \cong \R_{\FZ}^{2g - 4 + n}(\M_{g-1,n+1}^{\ct} \times \M^{\ct}_{1,1}) \to \R_{\FZ}^{2g - 3 + n}(\M_{g,n}^{\ct})
\]
is an isomorphism. 

Combined with the push-pull formula, this implies the following fact: for $w \in \R_{\FZ}^i(\M_{g,n}^{\ct})$ and
$\tilde{v} \in \R_{\FZ}^{2g - 4 + n - i}(\M_{g-1,n+1}^{\ct})$, we have
\begin{equation} \label{pushpull} w \cdot \varphi_* \tilde{v} = 0 \qquad \Longleftrightarrow \qquad \varphi^*w \cdot \tilde{v} = 0. 
\end{equation}
The proof that \eqref{eqn:R1FZ_pairing_is_perfect} is perfect breaks into two parts, given by Propositions \ref{R1part} and \ref{Rsminus1part} below.% The first is the following proposition.
\begin{prop}\label{R1part}
    For any nonzero element $w \in \R^1(\M_{g,n}^{\ct})=\R_{\FZ}^1(\M_{g,n}^{\ct})$, there exists some $v\in \R_{\FZ}^{2g-4+n}(\M_{g,n}^{\ct})$ such that $v\cdot w\neq 0$.
\end{prop}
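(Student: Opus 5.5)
We argue by induction on $g$ along the elliptic-tail gluing map $\varphi$ and the equivalence \eqref{pushpull}. The base cases $g=0,1$ hold because there the pairing is perfect and the $3$-spin relations are complete. So assume $g\geq 2$ and that the pairing \eqref{eqn:R1FZ_pairing_is_perfect} is perfect for $\M_{g-1,n+1}^{\ct}$.

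Let $w\neq 0$ in $\R^1(\M_{g,n}^{\ct})$. We look for $v$ of the form $v=\varphi_*\tilde v$ with $\tilde v\in \R_{\FZ}^{2g-5+n}(\M_{g-1,n+1}^{\ct})$. Since $\varphi_*$ lands in codimension one higher, the degrees match up. By \eqref{pushpull}, it suffices to show $\varphi^*w\neq 0$ in $\R^1_{\FZ}(\M_{g-1,n+1}^{\ct})$. Here $\varphi^*w$ is regarded via the identification with classes on $\M_{g-1,n+1}^{\ct}\times \M_{1,1}^{\ct}$ paired against $[\M^{\ct}_{1,1}]$, so only the $\M_{g-1,n+1}^{\ct}$-factor component matters. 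By Corollary \ref{ctdivisors}, $\R^1_{\FZ}=\R^1$ in all cases. Once $\varphi^*w\neq0$, the inductive hypothesis gives $\tilde v$ with $\varphi^*w\cdot\tilde v\neq 0$, and we are done.

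The heart of the argument is therefore injectivity of $w\mapsto \varphi^*w$ (restricted to the relevant Künneth component) on $\R^1(\M_{g,n}^{\ct})$. I would use an explicit basis of $\R^1(\M_{g,n}^{\ct})$ for $g\geq 2$: the classes $\kappa_1$ and $\psi_i$ together with the compact-type boundary divisors $\delta_{h,S}$, where $S\subseteq[n]$ and $0\le h\le g$, subject to the known relations. In genus $\geq 3$ these relations are just the stability and symmetry identifications. In genus $2$ the Mumford-type relation expressing $\kappa_1$ (or $\lambda_1$) in boundary terms must be taken into account; the contracted locus is absent, since only irreducible nodes are removed.

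Next I would compute $\varphi^*$ of each generator. The $\psi_i$ with $i\le n$ pull back to $\psi_i$. $\kappa_1$ pulls back to $\kappa_1\otimes 1 + 1\otimes\kappa_1$, and I discard the second term. A divisor $\delta_{h,S}$ pulls back to a sum of boundary divisors $\delta_{h,S}$ and $\delta_{h-1,S\cup\{n+1\}}$ according to which side receives the tail. The exceptional case is the divisor $\delta_{1,\{n+1\}}$-type self-intersection, which yields $-\psi_{n+1}$, or $-\psi$ on the tail factor.

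The main obstacle is showing that the resulting linear map between these explicitly presented $\R^1$'s is injective, in particular handling small $g$, where the $\R^1$ relations differ, and the divisor $\delta_{1,\emptyset}$ separating an elliptic tail. One option is to use, instead of a single $\varphi$, the family of all gluing maps attaching elliptic tails, or to symmetrize over which point receives the tail. If injectivity fails for the single map $\varphi$, I would try pairing $w$ directly with explicit test classes $v$: products of boundary strata with $\psi$ decorations on genus-$0$ vertices, together with pushforwards along maps $\M_{h,S\cup\{\star\}}^{\ct}\times\M^{\ct}_{g-h,S^c\cup\{\star\}}\to\M_{g,n}^{\ct}$ combined with induction on both factors. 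These test classes would be chosen to produce a triangular pairing matrix against the divisor basis.
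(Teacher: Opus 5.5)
\textbf{There is a genuine gap: the genus-$2$ case.} You identify the heart of the argument as injectivity of $\varphi^*$ on $\R^1(\M_{g,n}^{\ct})$, but this is false for $g=2$, which is exactly where your induction bottoms out. The class $\lambda_1$ is nonzero in $\R^1(\M_{2,n}^{\ct})$; already on $\M_2^{\ct}$ it spans $\R^1(\M_2^{\ct})\cong\qq$. Yet $\varphi^*\lambda_1=\lambda_1\otimes 1+1\otimes\lambda_1=0$, because $\lambda_1$ vanishes on $\M_{1,n+1}^{\ct}$ and on $\M_{1,1}^{\ct}$. For $n=0$ the target $\R^1(\M_{1,1}^{\ct})$ is simply $0$.

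The same vanishing holds for every gluing map from a product of compact type spaces of genus $\le 1$. So varying or symmetrizing the elliptic-tail maps cannot detect $\lambda_1$: a test class supported on a stratum with a genus-$2$ vertex is unavoidable. Your last sentence gestures at test classes, but it neither identifies $\ker\varphi^*$ nor exhibits a class pairing nontrivially with it. So genus $2$ is not proved, and through the induction neither is any $g\ge 3$.

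\textbf{What is needed (this is what the paper does).}
\begin{enumerate}
\item Show $\ker\varphi^*=\qq\lambda_1$ in genus $2$. Prove $\varphi^*$ is injective on the codimension-one subspace $W$ spanned by the $\psi_i$ and the boundary divisors other than $\delta_{1,\emptyset}$. On $\M_{1,n+1}^{\ct}$, rewrite everything using $\delta_{1,A}=\delta_{0,A^c}$ and $\psi_i=\sum_{i\in A,\,|A|\ge 2}\delta_{0,A}$. Independence of the $\delta_{0,A}$ then forces all coefficients to vanish.
\item Write $w=c\lambda_1+w'$ with $w'\in W$. If $w'\neq 0$, then $\varphi^*w=\varphi^*w'\neq 0$, and Tavakol's genus-$1$ result together with \eqref{pushpull} finishes.
\item If $w=c\lambda_1$, pair it with $z=\xi_*(\psi\otimes\mathrm{pt})$, where $\xi:\M_{2,1}^{\ct}\times\M_{0,n+1}^{\ct}\to\M_{2,n}^{\ct}$. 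Then $\lambda_1\cdot z=\xi_*(\lambda_1\psi_1\otimes\mathrm{pt})$. This is nonzero because $\int_{\Mb_{2,1}}\lambda_1\psi_1\lambda_2\neq 0$ and $\xi_*$ carries socle to socle.
\end{enumerate}
Note that the $\psi$ in step 3 sits on the genus-$2$ vertex, not on genus-$0$ vertices as in your proposed test classes.

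\textbf{On $g\ge 3$.} Your plan there does match the paper's injectivity lemma, but the independence check still has to be carried out. You need that no $\delta_{a,A}$ coincides with a $\delta_{a'-1,A'+x}$, so nothing cancels. For $g=3$ the target has genus $2$, where $\kappa_1$ is a combination of $\psi$ and boundary classes in compact type. There you need that the $(\psi_x,\delta_{1,\emptyset})$-coefficients $(1,\tfrac{7}{5})$ of $\kappa_1$ are not proportional to the coefficients $(-1,1)$ coming from $\varphi^*\delta_{1,\emptyset}=\delta_{1,\emptyset}-\psi_x$.
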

\noindent Using the following lemma, the equivalence \eqref{pushpull}, and induction, we will reduce Proposition \ref{R1part} to $g=2$. The proof of Proposition \ref{R1part} in genus $2$ is similar to the $g\geq 3$ cases, but more technical, so we defer it until later.
\begin{lem} \label{pullbackinjective}
If $g \geq 3$, then 
\[\varphi^*: \R^1(\M_{g,n}^{\ct}) \to \R^1(\M_{g-1,n+1}^{\ct} \times \M^{\ct}_{1,1}) \cong \R^1(\M_{g-1,n+1}^{\ct})\]
is injective.    
\end{lem}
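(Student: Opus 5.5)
The plan is to make $\R^1(\M_{g,n}^{\ct})$ completely explicit for $g\geq 3$, compute $\varphi^*$ on a basis, and check that the images are linearly independent in $\R^1(\M_{g-1,n+1}^{\ct})$.

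\emph{Step 1: a basis of the source.} By Arbarello--Cornalba, for $g\geq 3$ the group $\R^1(\Mb_{g,n})=\RH^2(\Mb_{g,n})$ has as a basis
\begin{itemize}
\item $\lambda_1$,
\item $\psi_1,\dots,\psi_n$,
\item $\delta_{\mathrm{irr}}$,
\item the separating boundary divisors $\delta_{h,S}$, one for each unordered pair $\{(h,S),(g-h,S^c)\}$ with the unstable cases omitted.
\end{itemize}
The kernel of restriction to compact type is spanned by the image of $[\Mb_{g-1,n+2}]$, that is, by $\delta_{\mathrm{irr}}$; this follows from Lemma \ref{tautexact} and its Chow analogue. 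Hence $\lambda_1$, the $\psi_i$ and the $\delta_{h,S}$ form a basis of $\R^1(\M_{g,n}^{\ct})$, and $\kappa_1$ is expressed through them by Mumford's formula. As a sanity check, this count reproduces the entries of Table \ref{Gortable} for $(3,0)$, $(3,1)$ and $(4,0)$.

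\emph{Step 2: the pullbacks.} Write $w=a\lambda_1+\sum_i b_i\psi_i+\sum c_{h,S}\delta_{h,S}$ and use the standard pullback formulas along the elliptic-tail gluing map.
\begin{itemize}
\item $\varphi^*\lambda_1=\lambda_1+\lambda_1^{(1,1)}$, and $\lambda_1^{(1,1)}$ is a multiple of the point class on $\Mb_{1,1}$; in degree $1$ on $\M_{1,1}^{\ct}$ it contributes only through the Künneth factor, and under the identification $\R^1(\M_{g-1,n+1}^{\ct}\times\M_{1,1}^{\ct})\cong \R^1(\M_{g-1,n+1}^{\ct})$ it should be tracked separately.
\item $\varphi^*\psi_i=\psi_i$ for $i\leq n$.
\item For a divisor $\delta_{h,S}$ not equal to the elliptic-tail divisor $\delta_{1,\emptyset}$, the pullback is the sum of the divisors on $\M_{g-1,n+1}^{\ct}$ obtained by placing the new marking $n+1$ on either side, with genus reduced by one on that side: $\delta_{h-1,S\cup\{n+1\}}+\delta_{g-h-1,S^c\cup\{n+1\}}$, dropping unstable or negative-genus terms.
\item For the excess term, $\varphi^*\delta_{1,\emptyset}=-\psi_{n+1}-\psi^{(1,1)}+\delta_{0,\{n+1\}}$-type corrections, namely the divisors where the point $n+1$ lies on a genus-$0$ or elliptic-tail component.
\end{itemize}

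\emph{Step 3: independence of the images.} When $g-1\geq 3$, Step 1 applies to the target as well, so the images can be compared coefficientwise in the target basis.
\begin{itemize}
\item The coefficients of $\psi_i$ for $i\leq n$ recover the $b_i$.
\item The coefficient of $\psi_{n+1}$ recovers $c_{1,\emptyset}$.
\item The coefficient of $\lambda_1$ recovers $a$.
\item Each remaining $c_{h,S}$ is read off from a boundary divisor $\delta_{h',S'}$ of the target. This uses a triangular argument: order the divisors by $h$, and note that a divisor containing $n+1$ on its genus-$(h-1)$ side arises only from $\delta_{h,S}$.
\end{itemize}
Once all coefficients are forced to vanish, $\varphi^*$ is injective.

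\emph{Main obstacle: the case $g=3$.} The target is then $\M_{2,n+1}^{\ct}$, where $\lambda_1$ and $\kappa_1$ are no longer independent of the boundary divisors: in genus $2$, $10\lambda_1=\delta_{\mathrm{irr}}+2\delta_1$ restricts to $\lambda_1=\tfrac15\delta_1$ on compact type. So the coefficient $a$ can no longer be read off directly.

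To handle this, I would first compute a basis of $\R^1(\M_{2,m}^{\ct})$ in the same way, namely $\psi_i$ and $\delta_{h,S}$. I would then rewrite $\varphi^*\lambda_1$ in that basis and verify by hand that the resulting linear system still has only the trivial solution. The point is that $\lambda_1$ contributes to a combination of the divisors $\delta_{1,T}$ that is symmetric in the markings, whereas the $\delta_{h,S}$ of $\M_{3,n}^{\ct}$ contribute asymmetrically, through the position of $n+1$.

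If this bookkeeping becomes unwieldy, the fallback is to test injectivity by intersecting $\varphi^*w$ with explicit test curves in $\M_{2,n+1}^{\ct}$, for example curves moving a marked point or an elliptic tail along a fixed component.
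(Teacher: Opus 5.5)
Your route is the paper's: explicit Arbarello--Cornalba bases, explicit pullbacks, and a no-cancellation check, with $\lambda_1$ in place of the paper's $\kappa_1$. For $g\ge 4$ your Step 3 is correct. No ordering by $h$ is needed: each separating divisor of $\M_{g-1,n+1}^{\ct}$ arises from exactly one divisor of $\M_{g,n}^{\ct}$ (add $1$ to the genus of the side containing $n+1$ and delete $n+1$), and $\psi_{n+1}$ arises only from $\delta_{1,\emptyset}$.

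Two formulas in Step 2 are wrong, though you never use the wrong parts. First, $\M_{1,1}^{\ct}=\M_{1,1}$ and $\R^1(\M_{1,1})=0$, so $\lambda_1^{(1,1)}$ and $\psi^{(1,1)}$ simply vanish. This is exactly why the identification in the statement holds, so nothing needs to be tracked separately. Second, the excess formula is
\[
\varphi^*\delta_{1,\emptyset}=-\psi_{n+1}+\delta_{1,\emptyset}.
\]
Here $\delta_{1,\emptyset}$ on the right is the divisor of unmarked elliptic tails of $\M_{g-1,n+1}^{\ct}$, with $n+1$ on the other side. There are no genus-$0$ corrections, and $\delta_{0,\{n+1\}}$ is not a stable divisor.

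For $g=3$ you identify the correct obstruction ($\lambda_1=\frac15\delta_1$ on $\M_{2,m}^{\ct}$, with $\delta_1$ the sum of all genus-$1$/genus-$1$ divisors) but leave the check undone. The ``symmetric versus asymmetric'' heuristic is not really the mechanism; for $n=0$ there are no markings at all. Test curves are also unnecessary, because the check is short. Write $S^c$ for the complement in $\{1,\dots,n\}$ and use the basis of $\psi$ classes and separating divisors of $\R^1(\M_{2,n+1}^{\ct})$. Then:
\begin{itemize}
\item $\varphi^*\lambda_1=\frac15\sum_{T\ni n+1}\delta_{1,T}$, a sum of genus-$1$/genus-$1$ divisors with no $\psi$ term;
\item $\varphi^*\delta_{0,S}=\delta_{0,S}$;
\item $\varphi^*\delta_{1,S}=\delta_{0,S\cup\{n+1\}}+\delta_{1,S^c\cup\{n+1\}}$ for $S\neq\emptyset$;
\item $\varphi^*\delta_{1,\emptyset}=-\psi_{n+1}+\delta_{1,\{1,\dots,n+1\}}$.
\end{itemize}
Suppose $\varphi^*w=0$. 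The coefficients of $\psi_i$, $\psi_{n+1}$, $\delta_{0,S}$ with $n+1\notin S$, and $\delta_{0,S\cup\{n+1\}}$ force $b_i$, $c_{1,\emptyset}$, $c_{0,S}$ and $c_{1,S}$ to vanish. After that, every $\delta_{1,T}$ has coefficient $a/5$, so $a=0$.

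This replaces the paper's observation that $\psi_x$ and $\delta_{1,\emptyset}$ occur in $\kappa_1$ with coefficients $1$ and $\frac75$, which are not negatives of each other. Because $\lambda_1$ has no $\psi$ component in genus $2$, your choice of generator makes the $g=3$ case marginally simpler than the paper's.
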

\begin{proof}

First suppose that $g \geq 4$ so that all separating boundary, $\psi$ and $\kappa$ classes are independent in $\R^1(\M_{g-1,n+1}^{\ct})$ by \cite[Theorem 2.2]{ArbarelloCornalba}. Let $\delta_{a,A} = [\Gamma_{a,A},1]$ denote the fundamental class of the boundary divisor associated to the graph $\Gamma_{a,A}$, consisting of  a vertex of genus $a$ hosting markings $A \subset \{1, \ldots,n\}$ with a single edge to a vertex of genus $g - a$ hosting markings $A^c$.
In order to avoid overcounting, we assume $a \leq g/2$ and, when $n > 0$, that $1 \in A$ if $a = g/2$.
The generic form of an element in $\R^1(\M_{g,n}^{\ct})$ is
\[w = \sum_{i=1}^n c_i \psi_i + \sum_{\substack{a \leq g/2 \\ 1 \in A \text{ if $a = g/2$} \\ (a, A) \neq (1, \emptyset)}} c_{a,A} \delta_{a, A}+ e \delta_{1, \emptyset}+ f \kappa_1 \in \R^1(\M_{g,n}^{\ct}) \]
for $c_i, c_{a,A}, e, f \in \qq$. Let $\{1, \ldots, n, x\}$ be the marking set on $\M_{g-1,n+1}^{\ct}$, where $\varphi$ glues an elliptic tail in at the marking $x$.
Then
\[\varphi^*w = \sum_{i=1}^n c_i \psi_i+ \sum_{\substack{a \leq g/2 \\ 1 \in A \text{ if $a = g/2$} \\ (a, A) \neq (1, \emptyset)}} c_{a,A} (\delta_{a-1, A+x}+ \delta_{a, A})+ e (-\psi_x+ \delta_{1, \emptyset})+ f \kappa_1 \in \R^1(\M_{g-1,n+1}^{\ct}). \]
Above, we use the convention $\delta_{-1,A+x} = 0$ to avoid writing out separate cases for the $a = 0$ terms in the sum.
We claim the classes appearing in the linear combination above are independent. From this it follows that if $\varphi^* w = 0$, then we have $c_i = c_{a,A}= e= f = 0$, so that $w = 0$, proving injectivity.

To verify the claim, we need to know that as we vary $A$ we never make $\delta_{a, A} = \delta_{a' - 1, A'+x}$, so that there is no way for cancellation to occur in the above sum. In order for that to happen, we would need $g - 1 - a = a' - 1$ and $A^c = A'+ x$. That would mean $g = a+ a'$. Since $a, a' \leq g/2$ this can only happen when $a, a' = g/2$. But in this case, we have assumed that $1 \in A$ and $1 \in A'$, so we cannot have $A^c = A'+ x$.

When $g = 3$, we take $w$ as before and the expression for $\varphi^*w$ is valid. However, this time, the $\psi$ classes and boundary divisors are a basis for
$\R^1(\M_{2,n+1}^{\ct})$, and there is a relation that expresses $\kappa_1$ in terms of them. Nevertheless, since $-\psi_x$ and $\delta_{1, \emptyset}$ are independent and appear only in the term $e(-\psi_x + \delta_{1,\emptyset})$, it suffices to see that the coefficients of $\psi_x$ and $\delta_{1, \emptyset}$ in $\kappa_1$ are not negatives of each other. This relation is given in \cite[Theorem 2.2(b)]{ArbarelloCornalba}. In the notation there, $\delta_a   = \sum_{A} \delta_{a, A}$, and $\psi = \sum \psi_i + \psi_x$, so $\delta_{1, \emptyset}$ appears with coefficient $7/5$ in $\kappa_1$, while $\psi_x$ appears with coefficient $1$.
\end{proof}

\begin{proof}[Proof of Proposition \ref{R1part} for $g \geq 3$ assuming the case $g=2$]
  Let $w\in \R^1(\M_{g,n}^{\ct})$ be nonzero. By Lemma \ref{pullbackinjective}, $\varphi^* w\neq 0$. Thus, there exists a class $\tilde{v}\in \R_{\FZ}^{2g-5+n}(\M_{g-1,n+1}^{\ct})$ such that $\tilde{v}\cdot\varphi^*w\neq 0$ by induction on $g$. Setting $v=\varphi_*\tilde{v}$, we see $v\cdot w\neq 0$ by \eqref{pushpull}.
\end{proof}

The second direction in Theorem \ref{socleminus1} is the following.
\begin{prop}\label{Rsminus1part}
    For any nonzero element $v\in \R_{\FZ}^{2g-4+n}(\M_{g,n}^{\ct})$, there exists some class $w\in \R^1(\M_{g,n}^{\ct})$ such that $v\cdot w\neq 0$.

\end{prop}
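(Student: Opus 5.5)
Our approach is to use Theorem \ref{theoremstar} to describe the generators of $\R_{\FZ}^{2g-4+n}(\M_{g,n}^{\ct})$ explicitly, and then to pair against a well-chosen divisor.

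First we classify the generators. Take a stable tree $\Gamma$ of genus $g$ with $n$ legs. It has at most $g-2+n$ genus $0$ vertices, and Theorem \ref{theoremstar} with $d=2g-4+n$ forces at least $g-3+n$ of them. So every generator $[\Gamma,\gamma]$ of $\R^{2g-4+n}_{\FZ}(\M_{g,n}^{\ct})$ is one of the following:
\begin{enumerate}[label=(\alph*)]
\item a maximally degenerate tree with $g-2+n$ trivalent genus $0$ vertices and $g$ elliptic leaves, carrying a decoration of degree $-1$; this is impossible, so type (a) does not occur;
\item a tree with $g-3+n$ genus $0$ vertices, carrying one degree-one excess. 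There are several ways this can happen:
\begin{itemize}
\item one genus $0$ vertex is $4$-valent with a $\psi$ or $\kappa_1$ decoration, or is undecorated but sits in a graph with one fewer edge;
\item one elliptic vertex carries a $\psi$ or $\kappa_1$ of degree $1$, which on $\Mb_{1,1}$ or $\Mb_{1,2}$ is again boundary plus a multiple of the point class;
\item one genus $2$ leaf (with $\le 1$ half-edge) appears in place of two elliptic leaves, undecorated.
\end{itemize}
\end{enumerate}
Using the genus $0$ relations (WDVV and $\psi$ on $\Mb_{0,4}$ as a boundary point, which are $3$-spin relations) and the genus $1$ relations of Corollary \ref{genus13spin} applied vertexwise, we reduce to a spanning set consisting of undecorated trees. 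These are trees of two shapes: trivalent genus $0$ vertices with $g$ elliptic leaves, where one elliptic leaf has two half-edges (a one-dimensional family); or trivalent genus $0$ vertices with $g-2$ elliptic leaves and one genus $2$ leaf. Relations on the genus $2$ vertex ($\R^0(\M_{2,1}^{\ct})$ versus the rest) can be handled the same way.

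Next we compute $v\cdot w$ for these spanning curves, i.e.\ the degree of a divisor $w$ on these one-dimensional strata. For the two shapes:
\begin{itemize}
\item In the genus $2$ leaf type, the curve is a complete curve in $\M_{2,1}^{\ct}$ (one dimension, via $\lambda_2$-socle on $\M_{2,1}$) glued to a fixed rational spine.
\item In the $\Mb_{1,2}$ type, the curve is an elliptic vertex with two attachment points.
\end{itemize}
Intersections with $\psi_i$, $\kappa_1$ and $\delta_{a,A}$ are computed by the standard excess/normal bundle formula. We then show injectivity of the map $\R_{\FZ}^{2g-4+n}\to \R^1(\M_{g,n}^{\ct})^\vee$. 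The cleanest route is by induction as in Proposition \ref{R1part}, but now on the pushforward side. Write the spanning classes as $\varphi_*$ of socle-minus-one classes on $\M_{g-1,n+1}^{\ct}$, with the marked point $x$ varying; different elliptic leaves give images under different gluing maps. We then show that $\R^{2g-4+n}_{\FZ}(\M_{g,n}^{\ct})$ is spanned by $\varphi_*$-images (symmetrised over which leaf), reducing dimension count. The key point is that $\dim \R^{2g-4+n}_{\FZ}\le \dim \R^1$, which combined with Proposition \ref{R1part} (giving $\operatorname{rank}$ of the pairing $=\dim\R^1$) yields perfection.

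So concretely, the plan is to prove the \emph{inequality} $\dim \R^{2g-4+n}_{\FZ}(\M_{g,n}^{\ct})\le \dim \R^1(\M_{g,n}^{\ct})$ by exhibiting, after the relations above, a spanning set indexed compatibly with the basis of $\R^1$ from \cite{ArbarelloCornalba}. Each spanning curve is a tree whose "moving" vertex is determined by a partition data $(a,A)$ (the side of a separating edge), together with curves moving $\psi_i$-type points. We will match these with $\delta_{a,A}$, $\psi_i$, $\kappa_1$.

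We expect the main obstacle to be this counting and matching, especially in small genus ($g=2,3$). There the Arbarello--Cornalba relation for $\kappa_1$ in genus $2$ alters $\dim\R^1$, and the genus $2$ leaf curves require the relations on $\M_{2,1}^{\ct}$ to be used carefully. We would first try to make the matching triangular with respect to the pairing matrix against $\{\psi_i,\delta_{a,A},\kappa_1\}$, checking nondegeneracy of the diagonal entries directly.
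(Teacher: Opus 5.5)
Your reduction is sound. By Proposition~\ref{R1part} the map $\R^1(\M^{\ct}_{g,n})\to\R^{2g-4+n}_{\FZ}(\M^{\ct}_{g,n})^\vee$ is injective, so the statement is equivalent to $\dim\R^{2g-4+n}_{\FZ}(\M^{\ct}_{g,n})\le\dim\R^1(\M^{\ct}_{g,n})$. But this inequality is the whole content of the proposition, and the proposal does not prove it.

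\emph{The planned verification addresses the wrong direction.} Making a pairing matrix triangular with nonzero diagonal only bounds ranks from below, which Proposition~\ref{R1part} already gives. The upper bound requires enough $3$-spin relations among the spanning strata, and you never produce them.

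\emph{The spanning set is incorrect.}
\begin{itemize}
\item You drop the strata with a $(0,4)$ vertex, and they cannot be dropped. For $\M^{\ct}_{2,2}$, Lemma~\ref{theoremstarconsequence} (after rewriting $\kappa_1$ on the genus $2$ vertex) gives five spanning strata of $\R^2_{\FZ}$: three with a $(1,2)$ vertex, the one whose $(0,4)$ vertex carries both markings and both elliptic tails, and $\xi_*(\psi\otimes\mathrm{pt})$. Since $\dim\R^2(\M^{\ct}_{2,2})=5$ by Table~\ref{Gortable}, these five form a basis.
\item Trees with a $(2,1)$ vertex have only $2g-5+n$ edges, so that vertex must carry $\psi$; it cannot be undecorated.
\end{itemize}

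\emph{Even the correct strata overshoot.} For $\M_5^{\ct}$ the genus $0$ relations leave four spanning strata: the two $(1,2)$-strata with the remaining elliptic tails split $1+3$ and $2+2$, one $(0,4)$-stratum, and the $\psi$-decorated $(2,1)$-stratum. Yet $\dim\R^1(\M^{\ct}_5)=3$. So a relation not coming from genus $0$ relations is needed, and nothing in your plan supplies or counts such relations in general.

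The missing idea is a way to count these relations without writing them down: exactness of the complex \eqref{2tails} combined with inductive duality. For $g\ge 3$ the paper argues as follows.
\begin{itemize}
\item $\varphi_*$ is surjective onto $\R^{2g-4+n}_{\FZ}(\M^{\ct}_{g,n})$ (Lemma~\ref{pushforwardsurjective}).
\item The two ways of gluing in a pair of elliptic tails give $\im(\alpha_*-\beta_*)\subseteq\ker\varphi_*$. This is presumably what your ``symmetrised over which leaf'' is reaching for.
\item Perfectness on $\M^{\ct}_{g-1,n+1}$ and $\M^{\ct}_{g-2,n+2}$ makes $\alpha_*-\beta_*$ the transpose of $\alpha^*-\beta^*$.
\item Exactness of \eqref{2tails} (Lemmas~\ref{cx} and~\ref{cx2}, proved by explicit computation in the Arbarello--Cornalba basis of $\R^1$) then gives $(\im\varphi^*)^\perp=\im(\alpha_*-\beta_*)\subseteq\ker\varphi_*$.
\end{itemize}
In your terms this yields exactly $\dim\R^{2g-4+n}_{\FZ}(\M^{\ct}_{g,n})\le\dim\im\varphi^*\le\dim\R^1(\M^{\ct}_{g,n})$. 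Without exactness, the symmetrization relations come with no count at all.

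Genus $2$ needs extra care. The class $\xi_*(\psi\otimes\mathrm{pt})$ is not in $\im\varphi_*$ (Lemma~\ref{g2pushforwardsurjective}). It is detected by pairing with $\lambda_1$, the generator of $\ker\varphi^*$ (Lemma~\ref{g2ker}). The cases $g=2$, $n\le 1$ are checked by computer.
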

The following two lemmas describe all such classes $v$. Lemma \ref{pushforwardsurjective} plays a role dual to Lemma \ref{pullbackinjective}. 
\begin{lem}\label{theoremstarconsequence}
    Any class $v \in \R^{2g-4+n}_{\FZ}(\M_{g,n}^{\ct})$ is a linear combination of decorated strata classes $[\Gamma,\gamma]$ with the following features:
    \begin{enumerate}
        \item There is one vertex of $\Gamma$ of type $(0,4)$, $(1,2)$, $(2,0)$ or $(2,1)$.
        \item All other vertices of $\Gamma$ are of type $(0,3)$ or $(1,1)$.
        \item If $\Gamma$ has no vertex of type $(2,1)$, then $\gamma$ is of degree $0$.
        \item If $\Gamma$ has a vertex of type $(2,1)$, then $\gamma$ is of degree $1$ on the type $(2,1)$ vertex and degree $0$ on all other vertices.
    \end{enumerate}
\end{lem}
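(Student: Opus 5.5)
The approach is to apply Theorem \ref{theoremstar} (Theorem $\star$ in $\R_{\FZ}$) to a lift of $v$ to $\R^{2g-4+n}_{\FZ}(\Mb_{g,n})$, and then to count vertices and degrees. Concretely, I would lift $v$ to a class in $\R^{2g-4+n}_{\FZ}(\Mb_{g,n})$ using the surjection $\S^*(\Mb_{g,n})\to\S^*(\M_{g,n}^{\ct})$, which is compatible with $\FZ$. Theorem \ref{theoremstar} then writes the lift as a combination of decorated strata $[\Gamma,\gamma]$, each with at least $d-g+1 = g-3+n$ genus $0$ vertices. Restricting to compact type kills every graph with a loop, so only stable trees $\Gamma$ survive. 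The statement then reduces to a combinatorial classification of such decorated trees.

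The key bookkeeping is as follows. For a stable tree with vertex set $V$, write $V_0$ for the set of genus $0$ vertices and $V_+$ for the set of positive genus vertices. The degree of a decorated stratum satisfies
\[
\deg[\Gamma,\gamma] = |E| + \deg\gamma \le \sum_{v\in V}\dim \M^{\ct}_{g(v),n(v)} + |E|,
\]
because a decoration on $v$ of degree above $2g(v)-3+n(v)$ vanishes on compact type. This vanishing holds because $\R^*(\M^{\ct}_{g(v),n(v)})$ is zero above the socle degree; to stay inside $\FZ$ I would justify it by the $i=2g-3+n$ case of Theorem \ref{Pixtonminus1} applied with one higher degree, or directly via Theorem $\star$ on the vertex. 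One has the identity
\[
\sum_v \bigl(2g(v)-3+n(v)\bigr) + |E| = 2g-3+n,
\]
using $\sum_v n(v) = n+2|E|$ and $|E| = |V|-1$. So each decorated stratum in degree $2g-4+n$ has total ``defect'' exactly $1$, where the defect of a vertex is
\[
\bigl(2g(v)-3+n(v)\bigr) - \deg\gamma_v \ge 0 .
\]
Hence exactly one vertex has defect $1$ and all others have defect $0$.

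The next step is to combine this with the genus-$0$ vertex count. A tree of genus $g$ with $n$ legs has at most $g-2+n$ genus $0$ vertices, with equality precisely when all genus $0$ vertices are trivalent and all positive genus vertices are genus $1$ leaves. The Theorem $\star$ bound $|V_0|\ge g-3+n$ allows exactly one unit of slack. I would make this precise through the identity
\[
\sum_{v\in V_0}(n(v)-3) + \sum_{v\in V_+}\bigl(3g(v)-3+n(v)-1\bigr) + \dots = \text{slack},
\]
or more simply by contracting and counting. The slack is spent in one of four ways: one genus $0$ vertex of valence $4$; one genus $1$ vertex of valence $2$; one genus $2$ vertex of valence $1$; or a genus $2$ vertex of valence $0$, which is possible only if $\Gamma$ is a single vertex with $(g,n)=(2,0)$. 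All other vertices must then be of type $(0,3)$ or $(1,1)$.

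With the vertex types fixed, I would read off the decorations from the defect count. Vertices of type $(0,3)$ and $(1,1)$ have $\dim\M^{\ct}=0$, so they carry no decoration and have defect $0$. The exceptional vertex must therefore carry the defect $1$. Its compact-type dimension is $1$ for $(0,4)$, $(1,2)$ and $(2,0)$, forcing degree $0$ there, and $2$ for $(2,1)$, forcing degree $1$. This gives conditions (3) and (4).

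I expect the main obstacle to be justifying that decorations above local compact-type socle degree may be discarded modulo $\FZ$, rather than merely modulo tautological relations. Theorem $\star$ only controls genus $0$ vertex counts, not local decoration degrees. A cleaner alternative is to apply the vertex-count argument alone: that argument already pins down the types $(0,3)$, $(1,1)$ and the exceptional vertex. Degree then forces $\deg\gamma = 2g-4+n-|E|$, and a direct count shows this equals $0$ or $1$ as claimed. Since any decoration of degree $1$ cannot be placed on a $0$-dimensional vertex without exceeding $3g(v)-3+n(v)$, it lands on the exceptional vertex. In the $(1,2)$ case, however, this count allows a $\psi$ class on the vertex, and such a decoration must be removed using the genus $1$ $\FZ$ relations of Corollary \ref{genus13spin}, because $\psi$ on $\M^{\ct}_{1,2}$ is a boundary class. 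I would handle this case, and the analogous ones for $(0,4)$ and $(2,0)$, in that way.
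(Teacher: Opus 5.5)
Your main line is sound and uses the same input as the paper: Theorem~\ref{theoremstar} for a lift to $\Mb_{g,n}$, then discarding graphs with loops. The bookkeeping differs.
\begin{itemize}
\item \emph{Paper.} It classifies vertex types by induction on leaves: stripping a leaf of type $(g',n')$ and applying the genus-$0$ bound to the remaining tree gives $n'\le 4$ or $g'+n'\le 3$. It then gets (3) and (4) from an edge count ($2g-4+n$ edges, or $2g-5+n$ with a $(2,1)$ vertex), plus a one-line remark that a degree-one decoration is only nonvanishing on the $(2,1)$ vertex.
\item \emph{You.} You use a global count for the vertex types, and the defect identity $\sum_v(2g(v)-3+n(v))+|E|=2g-3+n$ for the decorations.
\end{itemize}
Your route makes the paper's remark explicit. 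Since $\Mb_{1,1}$ has dimension $1$, $\psi$ or $\kappa_1$ on an elliptic tail is a legitimate element of $\S^*$, and it must die modulo $\FZ$ on compact type. Your justification ``directly via Theorem $\star$ on the vertex'' does exactly this. A class in $\R^d_{\FZ}(\Mb_{g(v),n(v)})$ with $d>2g(v)-3+n(v)$ is supported on graphs with more than $g(v)-2+n(v)$ genus-$0$ vertices, hence on graphs with loops, and $\FZ$ is closed under gluing into a vertex. (The appeal to Theorem~\ref{Pixtonminus1} is not the right tool, since it says nothing above the socle degree.) So the worry in your last paragraph is unfounded, and you should drop the ``cleaner alternative''. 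It claims a $(1,1)$ vertex cannot carry a degree-one decoration without exceeding $3g(v)-3+n(v)$, which is false because that number is $1$. It also places the difficulty in the $(1,2)$ case, where the edge count already forces $\deg\gamma=0$.

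There are three corrections to the counting.

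\textbf{The slack identity.} Your displayed identity is wrong. From $\sum_v(n(v)-2)=n-2$ and $\sum_{v\in V_+}g(v)=g$ one gets
\[
(g-2+n)-|V_0|=\sum_{v\in V_0}\bigl(n(v)-3\bigr)+\sum_{v\in V_+}\bigl(g(v)+n(v)-2\bigr),
\]
with every term nonnegative. Your term $3g(v)-3+n(v)-1$ would charge a $(2,1)$ vertex $3$ and exclude it.

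\textbf{The missing $(3,0)$ case.} With the correct identity, slack at most $1$ allows:
\begin{itemize}
\item one vertex of type $(0,4)$, $(1,2)$ or $(2,1)$;
\item a lone $(2,0)$ vertex, at zero cost;
\item a lone $(3,0)$ vertex, which your list of four omits.
\end{itemize}
For $(g,n)=(3,0)$ and $d=2$, Theorem~\ref{theoremstar} is vacuous ($d-g+1=0$). The defect count also permits $\deg\gamma=2$, since the local socle degree is $3$. So $\kappa_2$ and $\kappa_1^2$ on the one-vertex graph pass both of your filters but are not of the form in the lemma. Disposing of them needs an extra input, e.g.\ that the genus-$3$ Faber--Zagier relations give $\R^2_{\FZ}(\M_3)=0$, or a direct computation. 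The paper's own enumeration of $g'+n'\le 3$ keeps $(2,0)$ but also drops $(3,0)$, so this is a shared omission rather than a divergence from the paper.

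\textbf{Dimension versus socle degree.} Where you write $\dim\M^{\ct}_{g(v),n(v)}$ you mean the socle degree $2g(v)-3+n(v)$, not the dimension $3g(v)-3+n(v)$. Conflating the two is what produced the confusion in your last paragraph.
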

\begin{proof}

We first prove parts (1) and (2). By Theorem \ref{theoremstar}, any nontrivial generator of the group $\R^{2g-4+n}_{\FZ}(\M_{g,n}^{\ct})$ has at least $g - 3 + n$ genus $0$ vertices, which is one less than the maximum possible number of genus $0$ vertices, $g - 2 + n$. 

We proceed by induction on $g$ and $n$. As $\Gamma$ is a tree, we may suppose it has a leaf of some type $(g',n')$. This leaf is glued to a tree $\Gamma_0$ of genus $g - g'$ with $n - n' + 2$ markings. First suppose $g' = 0$. Then $\Gamma_0$ must have at least $g - 4 + n$ genus $0$ vertices. Hence, we have $g - 4 + n \leq (g - g') - 2 + (n - n' +2)$, which implies $n' \leq 4$. That is, any genus $0$ leaf is type $(0, 4)$ or $(0, 3)$. If the leaf is type $(0, 4)$, then $\Gamma_0$ has the maximal number of genus $0$ components given its genus so all other vertices are type $(0, 3)$ or $(1, 1)$. If the leaf is type $(0, 3)$, then $\Gamma_0$ has one less than the maximal number of genus $0$ components and the claim follows by induction.
Now suppose $g' > 0$. Then $\Gamma_0$ must have at least $g - 3 + n$ genus $0$ vertices. Hence, we have $g - 3 + n \leq (g - g') - 2 + (n - n' + 2)$, which implies $g' +n' \leq 3$. Thus, the allowable $(g', n')$ are $(1, 1), (1, 2),(2, 1),$ and $(2, 0)$.  If the leaf is type $(1,2)$ or $(2, 1)$, then $\Gamma_0$ has the maximal number of genus $0$ components, so all other vertices are type $(0, 3)$ or $(1, 1)$. If the leaf is type $(1, 1)$, then $\Gamma_0$ has one less than the maximal or the maximal number of genus $0$ components, and thus has the claimed form by induction.

To prove parts (3) and (4), note that the number of edges in a graph $\Gamma$ satisfying (1) and (2) is $2g - 4 + n$, unless $\Gamma$ has a vertex of type $(2, 1)$, in which case there are $2g - 5 + n$ edges. In the latter case, there must be a degree $1$ decoration and the only place it gives a non-vanishing generator is on the $(2, 1)$ vertex.
\end{proof}

\begin{lem}\label{pushforwardsurjective}%%%\item 
If $g \geq 3$, the map
\[\varphi_*:\R_{\FZ}^{2g-5+n}(\M_{g-1,n+1}^{\ct}) \cong \R_{\FZ}^{2g-5+n}(\M_{g-1,n+1}^{\ct} \times \M^{\ct}_{1,1}) \to \R_{\FZ}^{2g-4+n}(\M_{g,n}^{\ct})\]
is surjective.
\end{lem}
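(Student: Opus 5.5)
Lemma \ref{theoremstarconsequence} already tells us that $\R_{\FZ}^{2g-4+n}(\M_{g,n}^{\ct})$ is spanned by decorated strata $[\Gamma,\gamma]$ of a very restricted shape. So it suffices to show that each such generator lies, modulo $3$-spin relations, in the image of $\varphi_*$. Equivalently, we show it is $\FZ$-equivalent to a stratum in which some genus $1$ vertex of type $(1,1)$ is an elliptic tail, after relabeling markings. Since $\varphi$ attaches the elliptic tail at a fixed marking $x$, "in the image of $\varphi_*$" means the following: the graph has a $(1,1)$ leaf, and removing it leaves a compact type tree of genus $g-1$ with $n+1$ legs. Any graph with an elliptic tail is of this form, because the remaining tree is itself a decorated stratum on $\M_{g-1,n+1}^{\ct}$ with the decoration restricted. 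Moreover $\varphi_*$ is compatible with $\FZ$ relations, since $3$-spin relations are tautological relations preserved under boundary pushforward. Thus the whole task is combinatorial: show that every generator from Lemma \ref{theoremstarconsequence} is equivalent to one with a $(1,1)$ vertex.

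We argue by cases on the special vertex of Lemma \ref{theoremstarconsequence}(1). The genus of $\Gamma$ is the sum of the vertex genera, so when $g\geq 3$ we count genus $1$ vertices. If the special vertex is of type $(0,4)$, all genus is carried by $(1,1)$ vertices, and there are $g\ge 3$ of them. If it is of type $(1,2)$, there are $g-1\geq 2$ further genus $1$ vertices, all of type $(1,1)$. If it is of type $(2,0)$ or $(2,1)$, there are $g-2\geq 1$ remaining vertices of type $(1,1)$. In every case at least one $(1,1)$ vertex exists. A vertex of type $(1,1)$ has valence one, so it is a leaf, i.e.\ an elliptic tail. Its decoration has degree $0$ by Lemma \ref{theoremstarconsequence}(3),(4). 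The only exception is the $(2,1)$ case, where the degree-$1$ decoration sits on the genus $2$ vertex and not on the elliptic tail.

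Contracting the elliptic tail therefore exhibits $[\Gamma,\gamma]=\varphi_*[\Gamma',\gamma']$. Here $\Gamma'$ is $\Gamma$ with the tail removed and its edge half replaced by the leg $x$, and $\gamma'$ is the restriction of $\gamma$. We must check that $\Gamma'$ is stable of genus $g-1$ with $n+1$ legs; this holds because the attaching vertex keeps its valence. We must also check that the degrees match: $[\Gamma',\gamma']$ has one fewer edge, so its degree is $2g-5+n$. One subtlety is the identification $\R^*_{\FZ}(\M_{g-1,n+1}^{\ct}\times\M_{1,1}^{\ct})\cong \R^*_{\FZ}(\M_{g-1,n+1}^{\ct})$ used in the statement. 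This is harmless because $\R^*(\M_{1,1}^{\ct})$ is $\qq$ in degree $0$, and degree-$\geq1$ decorations on the $(1,1)$ vertex vanish in the relevant degree.

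The main obstacle I anticipate is making sure that "linear combination modulo $\FZ$" is handled correctly. Specifically, $\varphi_*$ on the $\FZ$ quotients should be well defined, meaning the pushforward of a $3$-spin relation on $\M_{g-1,n+1}^{\ct}$ is a $3$-spin relation on $\M_{g,n}^{\ct}$. I would take this from the fact that $\FZ^*$ forms an ideal closed under boundary gluing pushforwards, a standard property of Pixton's relations in the strata algebra.

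Given this, surjectivity follows immediately from Lemma \ref{theoremstarconsequence}. No hard geometry is needed beyond the counting of genus $1$ vertices. That counting is exactly where the hypothesis $g\geq3$ enters: for $g=2$, a $(2,0)$ or $(2,1)$ vertex can absorb all the genus, leaving no elliptic tail.
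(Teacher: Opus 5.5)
Your proposal is correct and follows the paper's proof. By Lemma \ref{theoremstarconsequence}, when $g\geq 3$ every generator of $\R_{\FZ}^{2g-4+n}(\M_{g,n}^{\ct})$ has a vertex of type $(1,1)$; this vertex is an elliptic tail, so the class is pushed forward from the elliptic tail divisor. Your explicit genus count (at least $g-2\geq 1$ such vertices) and your checks on degrees, stability, and $\varphi_*$ preserving $3$-spin relations fill in details the paper leaves implicit.
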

\begin{proof}
If $g \geq 3$, any class in $\R^{2g-4+n}_{\FZ}(\M_{g,n}^{\ct})$ must be supported on a graph with a $(1, 1)$ vertex by Lemma \ref{theoremstarconsequence}. Thus, all classes in codimension $2g - 4 + n$ are pushed forward from the elliptic tail divisor.
\end{proof}

Now consider the commutative diagram obtained by attaching two elliptic tails:
\begin{equation} \label{beq}
\begin{tikzcd}
& \M_{g-1,n+1}^{\ct} \times \M_{1,1}^{\ct} \arrow{dl}[swap]{\varphi} \\
 \M_{g,n}^{\ct}  & & \M_{g-2,n+2}^{\ct}\times \M_{1,1}^{\ct} \times \M_{1,1}^{\ct}. \arrow{ul}[swap]{\alpha} \arrow{dl}{\beta}\\
 & \M_{g-1,n+1}^{\ct} \times \M_{1,1}^{\ct} \arrow{ul}{\varphi}
\end{tikzcd}
\end{equation}
There is a complex
\begin{equation} \label{2tails}
\begin{tikzcd} \R^1(\M_{g,n}^{\ct}) \arrow{r}{\varphi^*} & \R^1(\M_{g-1,n+1}^{\ct} \times \M_{1,1}^{\ct}) \arrow{r}{\alpha^* - \beta^*} & \R^1(\M_{g-2,n+2}^{\ct} \times \M_{1,1}^{\ct} \times \M_{1,1}^{\ct}).
\end{tikzcd}
\end{equation}

\begin{lem} \label{cx}
If $g\geq 3$, then the complex \eqref{2tails} is exact.
\end{lem}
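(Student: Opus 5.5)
Since $g\geq 3$, Lemma \ref{pullbackinjective} shows that $\varphi^*$ is injective on $\R^1(\M_{g,n}^{\ct})$. The complex property $(\alpha^*-\beta^*)\circ\varphi^*=0$ is immediate from the commutativity of \eqref{beq}. So it suffices to show $\ker(\alpha^*-\beta^*)\subseteq \im \varphi^*$, or equivalently that $\dim\ker(\alpha^*-\beta^*) \le \dim \R^1(\M_{g,n}^{\ct})$. I would prove this by an explicit coefficient computation, parallel to the proof of Lemma \ref{pullbackinjective}.

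\textbf{Setup.} Let $M^{\ct}_{g-2,n+2}$ have markings $\{1,\dots,n,x,y\}$. The map $\alpha$ attaches an elliptic tail at $y$, landing in $\M^{\ct}_{g-1,n+1}$ with marking $x$, where $\varphi$ then glues a tail. The map $\beta$ attaches a tail at $x$, with $y$ the marking glued by $\varphi$; the two $\M_{1,1}^{\ct}$ factors are swapped accordingly. Write a general element $u\in \R^1(\M^{\ct}_{g-1,n+1}\times\M^{\ct}_{1,1})$ as
\[
u=\sum_i c_i\psi_i + c_x\psi_x + \sum_{(b,B)} c_{b,B}\,\delta_{b,B} + f\kappa_1 + t\,\psi_{\mathrm{tail}},
\]
where $(b,B)$ runs over separating divisors of $\M^{\ct}_{g-1,n+1}$ (with $B\subseteq\{1,\dots,n,x\}$) and $\psi_{\mathrm{tail}}$ is pulled back from the $\M_{1,1}^{\ct}$ factor; omit the last term if that group vanishes. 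The pullback rules are those already used in Lemma \ref{pullbackinjective}:
\begin{itemize}
\item $\psi_i\mapsto\psi_i$ and $\kappa_1\mapsto\kappa_1$ (plus tail $\kappa$/$\psi$ terms);
\item $\delta_{b,B}\mapsto \delta_{b-1,B+\bullet}+\delta_{b,B}$, where $\bullet$ is the newly created marking;
\item the elliptic tail divisor $\delta_{1,\{\bullet\}^c}$ also picks up the excess term $-\psi_\bullet-\psi_{\mathrm{tail}}$.
\end{itemize}
Computing $\alpha^*u-\beta^*u$ expresses it in terms of $\psi_i,\psi_x,\psi_y$, the divisors $\delta_{a,A}$ on $\M^{\ct}_{g-2,n+2}$ with $A\subseteq\{1,\dots,n,x,y\}$, $\kappa_1$, and the two tail $\psi$'s.

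\textbf{Generic case $g-2\geq 3$.} Here these classes are independent by \cite[Theorem 2.2]{ArbarelloCornalba}. Setting every coefficient of $\alpha^*u-\beta^*u$ to zero gives linear conditions on the coefficients of $u$:
\begin{itemize}
\item divisors with $x\in B$ must be matched with divisors not containing $x$, via $c_{b,B+x}=c_{b+1,B}$ (up to the usual complementation normalization);
\item $c_x$ is tied to the coefficient of $\delta_{1,\emptyset}$, as in the term $e(-\psi_x+\delta_{1,\emptyset})$;
\item $t$ is tied to the same coefficient.
\end{itemize}
These are exactly the relations satisfied by the explicit formula for $\varphi^*w$ in the proof of Lemma \ref{pullbackinjective}. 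So any $u$ in the kernel equals $\varphi^*w$, where $w$ has $\psi_i$ and $\kappa_1$ coefficients $c_i,f$ and divisor coefficients read off from the $c_{b,B}$ with $x\notin B$. One point needs checking: a divisor $\delta_{a,A}$ on $\M^{\ct}_{g-2,n+2}$ that contains exactly one of $x,y$ arises from exactly one divisor on each side. This is the same non-cancellation argument as in Lemma \ref{pullbackinjective}, now using the symmetry $x\leftrightarrow y$.

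\textbf{Main obstacle: $g=3,4$.} Here $\M^{\ct}_{g-2,n+2}$ has genus $1$ or $2$, where $\kappa_1$ and some $\psi$'s are expressible via boundary divisors, so the target classes are no longer independent. For $g=4$ I would substitute the Arbarello--Cornalba expression for $\kappa_1$ on $\M^{\ct}_{2,n+2}$ and redo the comparison. The key check is that the only new coincidences involve $\kappa_1$, whose coefficient $f$ is common to $\alpha^*u$ and $\beta^*u$ and cancels. For $g=3$ the target is genus $1$, where $\psi_i$ and $\kappa_1$ are boundary combinations and far more cancellation is possible. There I would instead bound dimensions directly: $\R^1$ of the genus-$1$ and genus-$2$ spaces is known to have a basis of boundary divisors (plus $\psi$'s in genus $2$). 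One then compares $\dim\ker(\alpha^*-\beta^*)$ with $\dim\R^1(\M^{\ct}_{3,n})$, using the coefficients $7/5$ and $1$ from Lemma \ref{pullbackinjective} to rule out an extra kernel vector along $\psi_x$. I expect this low-genus bookkeeping to be the hardest step.
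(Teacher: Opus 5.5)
\paragraph{Verdict.} For $g\geq 4$ your argument is the paper's, reorganized. The case $g=3$ is only outlined, and that is the genuine gap; your plan does close it once the count is carried out.

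\paragraph{The case $g\geq 4$.} You read off the kernel conditions $c_x=-c_{1,\emptyset}$ and $c_{b,B+x}=c_{b+1,B}$ and match them with the shape of $\varphi^*w$. The paper instead subtracts an explicit $\varphi^*w$ and shows the remainder dies. Both need only that $\psi_x-\psi_y$ and the divisors separating $x$ from $y$ are independent on $\M^{\ct}_{g-2,n+2}$. This holds as soon as $g-2\geq 2$, because $\kappa_1$ cancels in $\alpha^*-\beta^*$, so $g=4$ needs no separate treatment.

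\paragraph{The gap at $g=3$.} On $\M^{\ct}_{1,n+2}$ one has
$\psi_x-\psi_y=\sum_{\emptyset\neq C\subseteq\{1,\dots,n\}}(\delta_{0,C+x}-\delta_{0,C+y})$.
Hence $c_x=-c_{1,\emptyset}$ is no longer forced. The kernel really does contain vectors outside the generic pattern, for example $\psi_x-\sum_{C\neq\emptyset}\delta_{0,C+x}$. Such vectors cannot be ruled out; they must be shown to lie in $\im\varphi^*$. Your dimension count does exactly this:
\begin{itemize}
\item Use the basis $\psi_1,\dots,\psi_n,\psi_x$ together with the separating divisors of $\R^1(\M^{\ct}_{2,n+1})$; this space has dimension $3\cdot 2^n-1$.
\item $\alpha^*-\beta^*$ kills the $\psi_i$ and the $\delta_{0,S}$ with $x\notin S$.
\item It sends both $\psi_x$ and the unmarked elliptic tail to $\psi_x-\psi_y$.
\item It sends $\delta_{0,C+x}$ to $\delta_{0,C+x}-\delta_{0,C+y}$.
\item It sends the genus-$1$/genus-$1$ divisor whose $x$-side carries $B+x$, with $B\neq\{1,\dots,n\}$, to $\delta_{0,B^c+y}-\delta_{0,B^c+x}$.
\end{itemize}
So $\alpha^*-\beta^*$ has rank $2^n-1$. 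Therefore $\dim\ker(\alpha^*-\beta^*)=2^{n+1}=\dim\R^1(\M^{\ct}_{3,n})$, counting $n$ classes $\psi_i$, $\kappa_1$, and $2^{n+1}-n-1$ divisors. Injectivity of $\varphi^*$ then finishes the case. The numbers $\tfrac75$ and $1$ enter only through that injectivity, i.e.\ through $1+\tfrac75\neq 0$. This is what lets $\varphi^*\kappa_1$ account for the extra kernel direction along $\psi_x$.

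\paragraph{Comparison with the paper at $g=3$.} The paper instead puts a $\kappa_1$-term into $w$, with coefficient $c_x$. As written, \eqref{bc2} is then not correct. Since $\varphi^*\delta_{1,\emptyset}=\delta_{1,\emptyset}-\psi_x$, the difference $u-\varphi^*w$ retains the term $(a_{\Gamma_e}-\tfrac75 c_x)\psi_x$. In genus $1$ its image under $\alpha^*-\beta^*$ mixes with the divisor terms, so one cannot conclude $b_\Gamma=0$. For instance, with $n=1$ and $u=\psi_x-\delta_{0,\{1,x\}}$, which lies in the kernel, one gets $u-\varphi^*w=-\tfrac75 u$. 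The fix is to replace $c_x$ by $t=\tfrac{5}{12}(c_x+a_{\Gamma_e})$ throughout the definition of $w$. This again uses $1+\tfrac75\neq 0$. Your instinct about where the difficulty lies is therefore right, and the dimension count is arguably the cleaner route. It just has to be written out.
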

%\hannah{changed $v$'s in the proof to $u$'s in order to reserve $v$ for a class in socle minus $1$}
\begin{proof}
Let $x$ be the last marking on $\M_{g-1,n+1}^{\ct}$. 
Suppose $u \in \ker(\alpha^* - \beta^*) \subset \R^1(\M_{g-1,n+1}^{\ct})$. We wish to produce a class $w \in \R^1(\M_{g,n}^{\ct})$ such that $\varphi^* w = u$.
We first treat the case $g - 1 \geq 3$.
In this case, we can write $u$ uniquely as a sum of compact type graphs with one edge and $\psi$ and $\kappa$ classes
\[u = \sum_{\Gamma} a_{\Gamma} [\Gamma] + \sum_{i = 1}^{n} c_i \psi_i + c_x \psi_x + d \kappa_1.\]
Given a graph $\Gamma$, write $g(x)$ for the genus of the vertex containing $x$. Write $\Gamma\langle -x+1 \rangle$ for the graph obtained from $\Gamma$ by removing the marking $x$ and adding $1$ to the genus of the vertex that contained $x$. 

Let
\[w = \sum_{\Gamma : g(x) \geq g - 1 - g(x)} a_{\Gamma} [\Gamma\langle -x+1\rangle ] + \sum_{i=1}^n c_i \psi_i + d \kappa_1.\]
Recall that each of the $\psi$ classes pulls back to the $\psi$ class of the same name.
Note also that $\varphi^*\delta_{1,\emptyset} = \delta_{1,\emptyset} - \psi_x$.
By construction
\begin{equation} \label{bc} u - \varphi^*w = c_x' \psi_x + \sum_{\Gamma : g(x) < g - 1 - g(x)} b_{\Gamma} [\Gamma] 
\end{equation}
for some $b_{\Gamma}$ and $c_x' =c_x + a_{\Gamma_e}$, where $\Gamma_e$ is the elliptic tail graph.
%\johannes{Shouldn't it be $c_x' =c_x + a_{\Gamma_e}$ because of two minusses cancelling?}

We have $\alpha^*[\Gamma]$ is the sum of graphs where we decrease the genus of one of the vertices by one and add the marking $y$ on that vertex. Write $\Gamma\langle y \neq x \rangle$ for the term in $\alpha^*[\Gamma]$ where $y$ and $x$ are not on the same vertex. Then we have
\[\alpha^*(u - \varphi^*w) = c_x' \psi_x + \sum_{\substack{\Gamma \\ 2g(x) < g-1}} b_{\Gamma} [\Gamma\langle y \neq x \rangle] + \text{terms with $x, y$ on same vertex}.\]
Let $\tau$ be the automorphism of $\M_{g-2,n+2}^{\ct}$ that swaps $x$ and $y$.
Then,
\begin{equation} \label{diff} 0 = (\alpha^* - \beta^*)(u - \varphi^*w) = c_x'(\psi_x - \psi_y) + \sum_{\substack{\Gamma \\ 2g(x) < g - 1}} b_{\Gamma}[\Gamma\langle y \neq x \rangle] - \sum_{\substack{\Gamma \\ 2g(x) < g - 1}} b_{\Gamma} \tau^*[\Gamma\langle y \neq x \rangle]. 
\end{equation}
The left-hand side vanishes because $u$ and $\varphi^*w$ are both in $\ker(\alpha^* - \beta^*)$.

If $g - 1$ is even, then $2g(x) < g - 1$ means that $g(x) < (g - 1)/2$, so $g - 1 - g(x) \geq g(x) + 2$. This means that $g(y) > g(x)$ in the graph $\Gamma \langle y \neq x \rangle$. 
Since $g(y) > g(x)$, the graphs in the second sum in \eqref{diff} are distinct from those in the first. Assuming that $g - 2 \geq 2$, all terms on the right hand side of \eqref{diff} are independent by \cite[Theorem 2.2]{ArbarelloCornalba}.
Thus, we have $c_x' = 0$ and $b_{\Gamma} = 0$ for all $\Gamma$. Hence, considering \eqref{bc}, we have $u - \varphi^*w = 0$ so $u$ lies in the image of $\varphi^*$.

Now suppose $g - 1$ is odd and that $g - 2 \geq 2$.
As in the previous paragraph, we learn that $c_x' = 0$ and $b_{\Gamma} = 0$ for $\Gamma$ with $g(x) < (g - 2)/2$. However, when $g(x) = (g - 2)/2$, then there is another graph $\Gamma'$ with the property that $\tau(\Gamma'\langle y \neq x \rangle) = \Gamma$ and vanishing of \eqref{diff} implies $b_{\Gamma} = b_{\Gamma'}$, rather than that both vanish. 
If $\Gamma$ has markings $x \cup A$ on the vertex of genus $g/2 - 1$, then $\Gamma'$ is the graph with $x \cup A^c$ on the vertex of genus $g/2 - 1$.
In particular, $[\Gamma] + [\Gamma'] = \varphi^*[\Gamma \langle - x + 1 \rangle]$. Hence, considering \eqref{bc} we see that $u - \varphi^* w$ is actually in the image of $\varphi^*$, so $u$ lies in the image of $\varphi^*$.

Finally, we treat the case $g -1 = 2$.
Recall that $\varphi^*\kappa_1 = \kappa_1$, but in genus $2$ there is a relation that expresses $\kappa_1$ in terms of $\psi$ and boundary classes \cite[Theorem 2.2(b)]{ArbarelloCornalba}. Let us define $\epsilon_\Gamma$ to be $-1$ if $\Gamma$ has a vertex of genus $0$ and $\frac{7}{5}$ if $\Gamma$ has a vertex of genus $1$. Then the relation is
\[\kappa_1 = \psi_x + \sum_{i=1}^{n} \psi_i + \sum_{\Gamma} \epsilon_\Gamma [\Gamma]  \in \R^1(\M_{2,n+1}^{\ct}).\]
When $g -1 = 2$, we can write $u$ uniquely as a sum of compact type graphs and $\psi$ classes
\[u = \sum_{\Gamma} a_{\Gamma} [\Gamma] + \sum_{i = 1}^{n} c_i \psi_i + c_x \psi_x  \in \R^1(\M_{2,n+1}^{\ct}).\]
Now, let us set
\[w = c_x\kappa_1 + \sum_{i=1}^n (c_i - c_x) \psi_i + \sum_{\Gamma : g(x) \geq g - 1 - g(x)} (a_{\Gamma} - c_x \epsilon_\Gamma) [\Gamma\langle -x+1\rangle ] \in \R^1(\M_{3,n}^{\ct}).\]
Then,
\begin{equation} \label{bc2} u - \varphi^*w = \sum_{\Gamma : g(x) < g - 1 - g(x)} b_{\Gamma} [\Gamma]
\end{equation}
for some $b_\Gamma$.
Notice that $g(x) < 2 - g(x)$ implies $g(x) = 0$, so the sum is over graphs with $g(x) = 0$.
 The rest of the proof proceeds similarly. We find that
\begin{equation} \label{forg2} 0 = (\alpha^* - \beta^*)(u - \varphi^*w) = \sum_{\Gamma} b_{\Gamma} [\Gamma \langle y \neq x \rangle] - \sum_{\Gamma} b_{\Gamma} [\Gamma \langle y \neq x \rangle].
\end{equation}
The first sum consists of graphs where $g(x) = 0$ and $g(y) = 2$ while the second sum consists of graphs where $g(x) = 2$ and $g(y) = 0$. In genus $1$, there are no relations among the boundary divisors. Thus, the terms on the right are independent, so all $b_{\Gamma} = 0$. Considering \eqref{bc2}, we see that $u - \varphi^*w = 0$, so $u$ lies in the image of $\varphi^*$.
\end{proof}

%Meanwhile, using that the pairing is perfect for smaller $g$ and \eqref{pushpull}, we find that 
%\[\alpha_* - \beta_*: R^{2g - 6 +n}(\M_{g-2,n+2}^{\ct}) \to R^{2g - 5 + n}(\M_{g-1,n+1}^{\ct})\]
%is the dual of $\alpha^* - \beta^*$. Thus,
%Thus,
%\[(\im \varphi^*)^\perp = \ker(\alpha^* - \beta^*)^\perp = \im(\alpha_* - \beta_*) \subset \ker \varphi_*,\]
%where the last containment follows form commutativity of \eqref{beq}.

\begin{proof}[Proof of Proposition \ref{Rsminus1part} for $g\geq 3$ assuming the $g=2$ case]
Suppose $v\in \R_{\FZ}^{2g-4+n}(\M_{g,n}^{\ct})$. By Lemma \ref{pushforwardsurjective}, we can write $v=\varphi_*\tilde{v}$ for some $\tilde{v} \in \R_{\FZ}^{2g - 5+n}(\M_{g-1,n+1}^{\ct})$. For any $w\in \R_{\FZ}^{1}(\M_{g,n}^{\ct})$, we have
\[
v\cdot w = \varphi_*\tilde{v}\cdot w=\tilde{v}\cdot \varphi^*w
\]
by \eqref{pushpull}. We will show that $(\im \varphi^*)^{\perp}\subset \ker \varphi_*$, so that 
$v\cdot w=0$ for all $w$ only if $v=0$.

    By induction on $g$, the pairing induces isomorphisms
    \[
    \R^1(\M_{g-2,n+2}^{\ct})^{\vee}\cong \R_{\FZ}^{2g-6+n}(\M_{g-2,n+2}^{\ct}) \quad \text{ and } \quad \R^1(\M_{g-1,n+1}^{\ct})^{\vee}\cong \R_{\FZ}^{2g-5+n}(\M_{g-1,n+1}^{\ct}).
    \]
   By \eqref{pushpull}, it follows that under these dualities, the map 
   \[
   \alpha_*-\beta_*:\R_{\FZ}^{2g-6+n}(\M_{g-2,n+2}^{\ct})\rightarrow \R_{\FZ}^{2g-5+n}(\M_{g-1,n+1}^{\ct})
   \]
   is dual to the pullback map 
   \[
   \alpha^*-\beta^*:\R_{\FZ}^{1}(\M_{g-1,n+1}^{\ct})\rightarrow \R_{\FZ}^{1}(\M_{g-2,n+2}^{\ct}).
   \]
By Lemma \ref{cx} and duality,
\[(\im \varphi^*)^\perp = \ker(\alpha^* - \beta^*)^\perp = \im(\alpha_* - \beta_*) \subset \ker \varphi_*,\]
where the last containment follows from the commutativity of \eqref{beq}.
\end{proof}
Theorem \ref{socleminus1} will thus follow once we prove Propositions \ref{R1part} and \ref{Rsminus1part} when $g=2$.
\subsubsection{Genus 2}
The genus $2$ analogue of Lemma \ref{pullbackinjective} is the following.
\begin{lem} \label{g2ker}
If $g = 2$, then the kernel of $\varphi^*: \R^1(\M_{2,n}^{\ct}) \to \R^1(\M_{1,n+1}^{\ct})$ is the span of $\lambda_1$. 
\end{lem}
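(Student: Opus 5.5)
Our plan is to mirror the proof of Lemma \ref{pullbackinjective}, taking into account the extra relations in low genus. First we describe $\R^1(\M_{2,n}^{\ct})$. By \cite[Theorem 2.2]{ArbarelloCornalba}, in genus $2$ the classes $\psi_1,\dots,\psi_n$ and the separating boundary divisors $\delta_{a,A}$ form a basis of $\R^1(\Mb_{2,n})$ modulo the nonseparating divisor $\delta_{\mathrm{irr}}$. Here $\kappa_1$ is expressed via the relation quoted above, with coefficients $-1$ on genus-$0$ type divisors and $\tfrac75$ on genus-$1$ type divisors. Restricting to compact type kills $\delta_{\mathrm{irr}}$. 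Hence $\R^1(\M_{2,n}^{\ct})$ is spanned by the $\psi_i$ and the $\delta_{a,A}$, where $a\in\{0,1\}$ and $|A|\geq 2$ if $a=0$ (with the usual normalization $1\in A$ when $a=1$). Mumford's relation $10\lambda_1=\delta_{\mathrm{irr}}+2\delta_1$ on $\Mb_2$, pulled back to $\Mb_{2,n}$, gives $\lambda_1=\tfrac15\sum_{A}\delta_{1,A}$ in $\R^1(\M_{2,n}^{\ct})$. In particular $\lambda_1$ is nonzero: its image in $\RH^2(\M_{2,n}^{\ct})$ is the pullback of a nonzero class on $\M_2^{\ct}$, which is nonzero by the socle statement of Section \ref{yesgorensteinsection} when $n=0$, and pullback along forgetful maps is injective after pairing with $\psi$ classes. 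The relations among these generators in $\R^1(\M_{2,n}^{\ct})$ are exactly those coming from $\delta_{\mathrm{irr}}$, so they form a basis.

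Next we verify that $\lambda_1$ lies in the kernel. Since $\lambda_1$ is pulled back along the Torelli map and $\varphi$ lands in $\M_{1,n+1}^{\ct}\times\M_{1,1}^{\ct}$, we have $\varphi^*\lambda_1=\lambda_1\otimes 1+1\otimes\lambda_1$. Here $\lambda_1$ vanishes on $\M_{1,1}^{\ct}$ in Chow with $\qq$-coefficients in degree $1$? This is not true in general. Instead, we use that in genus $1$ compact type $\lambda_1$ is a pullback from $\Mb_{1,1}$, where $\lambda_1=\tfrac1{12}\delta_{\mathrm{irr}}$, which vanishes on compact type. So $\varphi^*\lambda_1=0$. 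Alternatively, we compute directly using the formula for $\varphi^*\delta_{a,A}$ from Lemma \ref{pullbackinjective}, namely $\varphi^*\delta_{1,A}=\delta_{0,A+x}+\delta_{1,A}$, and $\varphi^*\delta_{1,\emptyset}=\delta_{1,\emptyset}-\psi_x$. Then we check, using the genus-$1$ relations $\psi_i=\sum_{i\in S,\,|S|\geq 2}\delta_{0,S}$-type expressions (the pullback of $\psi_1=\tfrac1{12}\delta_{\mathrm{irr}}$ plus boundary on $\Mb_{1,n}$), that the sum is zero.

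For the reverse inclusion, write a general $w=\sum c_i\psi_i+\sum c_{0,A}\delta_{0,A}+\sum c_{1,A}\delta_{1,A}$. Its pullback is
\[\varphi^*w=\sum c_i\psi_i+\sum c_{0,A}\delta_{0,A}+\sum c_{1,A}(\delta_{0,A+x}+\delta_{0,A^c+x})+\text{(correction for }A=\emptyset\text{ or }A^c=\emptyset).\]
Each genus-$1$-with-$A$ divisor on $\M_{2,n}$ splits into two genus-$0$ divisors on $\M_{1,n+1}$ containing $x$. We then use Getzler/Petersen's description of $\R^1(\M_{1,n+1}^{\ct})$: a basis is given by the boundary divisors $\delta_{0,S}$ with $|S|\geq2$, and each $\psi_i$ equals the sum of $\delta_{0,S}$ over $S\ni i$ (since $\lambda_1=0$ there). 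Substituting, we express $\varphi^*w$ in the basis $\{\delta_{0,S}\}$ and obtain a linear system. Divisors $\delta_{0,S}$ with $x\notin S$ force relations between $c_{0,S}$ and the $c_i$. Divisors with $x\in S$ force $c_{1,A}+c_{1,A^c}$-type combinations, which identify the two parametrizations of the same divisor, together with $\psi$ contributions. The main obstacle is bookkeeping this system so that the solution space is exactly one-dimensional, namely all $c_i=0$, all $c_{0,A}=0$, and all $c_{1,A}$ equal. We expect this to come out cleanly by first solving for the $x\notin S$ coordinates, which kill the $c_i$ and the $c_{0,A}$ once we compare $S$ with $|S|=2$ against larger $S$. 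After that we compare the $x\in S$ coordinates for varying $S$, which forces the $c_{1,A}$ to be constant. The result is a multiple of $\lambda_1$.
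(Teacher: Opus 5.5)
Most of your setup is sound. The $\psi_i$ and separating divisors span $\R^1(\M_{2,n}^{\ct})$; spanning is all you need, since $\ker\varphi^*$ is the image of the kernel on coefficient vectors. $\lambda_1$ is a nonzero multiple of $\sum_A\delta_{1,A}$. Your pullback formulas are right, including the excess term $-\psi_x$ for $\delta_{1,\emptyset}$. The argument $\varphi^*\lambda_1=\lambda_1\otimes 1+1\otimes\lambda_1=0$ is a valid alternative to the paper's reduction to $n=0$; despite your aside, $\lambda_1$ does vanish on $\M_{1,m}^{\ct}$.

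The gap is the step you call the ``main obstacle''. It is the whole content of the lemma, and the route you sketch for it does not work. The coordinates $\delta_{0,S}$ with $x\notin S$ give exactly the equations $c_{0,S}+\sum_{i\in S}c_i=0$. Each unknown $c_{0,S}$ appears in exactly one of them, so these equations only determine the $c_{0,S}$ and impose no condition on the $c_i$. Comparing $|S|=2$ with larger $S$ yields nothing. Your second stage, forcing the $c_{1,A}$ to be constant, needs $c_i=0$, which you have not obtained.

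The constraints on the $c_i$ come from the coordinates with $x\in S$. Write $e$ for the coefficient of $\delta_{1,\emptyset}$, and $a_S$ (with $1\in S$, $S^c\neq\emptyset$) for the remaining $\delta_{1,S}$.

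First, the coefficient of $\delta_{0,\{1,\ldots,n,x\}}$ is $\sum_i c_i+e-e$. The $+e$ from $\delta_{1,\emptyset}$ cancels the $-e$ coming from $-e\psi_x$, so $\sum_i c_i=0$.

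Second, the coefficients of $\delta_{0,S+x}$ and $\delta_{0,S^c+x}$ are $a_S-e+\sum_{i\in S}c_i$ and $a_S-e+\sum_{i\in S^c}c_i$. Hence $\sum_{i\in S}c_i=\sum_{i\in S^c}c_i$, and with the first equation both sums vanish.

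So $\sum_{i\in U}c_i=0$ for every nonempty $U\subset\{2,\ldots,n\}$, and therefore all $c_i=0$. Only now do the $x\notin S$ coordinates give $c_{0,S}=0$, and the $x\in S$ coordinates give $a_S=e$, i.e.\ $w\in\qq\lambda_1$.

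This is the paper's computation. The paper streamlines it by proving $\varphi^*$ is injective on the codimension-one subspace $W$ spanned by all generators except $\delta_{1,\emptyset}$, so $e$ never appears. Combined with $0\neq\lambda_1\in\ker\varphi^*$, this gives $\ker\varphi^*=\qq\lambda_1$.
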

\begin{proof}
We have $0\neq \lambda_1\in \R^1(\M_2^{\ct})$ and $\R^1(\M_2^{\ct}) \to \R^1(\M_{1,1}) = 0$, so $\lambda_1$ is in the kernel of $\varphi$ with no markings. Now assume $n > 0$. Considering the commutative diagram
\begin{center}
\begin{tikzcd}
\R^1(\M_{2,n}^{\ct}) \arrow{r}{\varphi^*} & \R^1(\M_{1,n+1}^{\ct}) \\
\R^1(\M_2^{\ct}) \arrow{u} \arrow{r} & \R^1(\M_{1,1}) \arrow{u}
\end{tikzcd}
\end{center}
we see that $\lambda_1$ must lie in the kernel of $\varphi^*$ for all $n$.

It suffices to show that $\varphi^*|_W$ is injective for some codimension $1$ subspace $W \subset \R^1(\M_{2,n}^{\ct})$. We take
$W$ to be the subspace spanned by $\psi$ classes and boundary divisors besides $\delta_{1, \emptyset}$. 
Let
\begin{equation} \label{vo} w =  \sum_{\substack{1 \in S  \\ S^c \neq \emptyset}} a_S \delta_{1,S}
+ \sum_{|S| \geq 2} b_S \delta_{0,S} + \sum_{i=1}^n c_i \psi_i
 \in W,
 \end{equation}
where the sums run over subsets $S \subset \{1, \ldots, n\}$. The pullback is
\begin{align} 
\varphi^*w &= \sum_{\substack{1 \in S \\ S^c \neq \emptyset}}a_S (\delta_{1,S} + \delta_{0, S + x}) + \sum_{|S|\geq 2}b_S \delta_{0, S} + \sum_{i=1}^n c_i \psi_i, \notag \\
\intertext{where the sum still runs over sets $S \subset \{1, \ldots, n\}$. Using the relations $\delta_{1,A} = \delta_{0, A^c}$ and $\psi_i = \sum_{i \in A, |A| \geq 2} \delta_{0,A}$ in $\R^1(\M_{1,n+1}^{\ct})$, we rewrite this in terms of the boundary divisors
$\delta_{0,A}$:}
\varphi^* w&=\sum_{\substack{1 \in S \\ S^c \neq \emptyset}} a_{S} (\delta_{0,S^c + x}
+
\delta_{0, S + x}) + \sum_{|S|\geq 2}b_S \delta_{0, S} \label{phiv}\\
&\qquad + \sum_{i=1}^n c_i \left(\sum_{i \in A, |A| \geq 2} \delta_{0, A} \right), \notag
\end{align}
where the sums run over sets $S \subset \{1, \ldots, n\}$ and the last sum over sets $A \subset \{1, \ldots, n, x\}$.

Now suppose that $\varphi^*w = 0$.
Since the boundary divisors $\delta_{0,A}$ form an independent set in $\R^1(\M_{1,n+1}^{\ct})$, when we collect terms, the coefficient of each $\delta_{0,A}$ above vanishes. We use this to prove that all $a_S, b_S$ and $c_i$ vanish.
Take some set $S \subset \{1,\ldots, n\}$ with $1 \in S$ and $S^c \neq \emptyset$. Considering the coefficient of $\delta_{0,S +x}$ in \eqref{phiv}, we have
\begin{equation} \label{as} 0 = a_S + \sum_{i \in S} c_i,
\end{equation}
while from considering the coefficient of $\delta_{0,S^c+x}$, we have
\[0 = a_S + \sum_{i \in S^c} c_i.\]
Hence, for every set with $1 \in S$ and $S^c \neq 0$, we have
\[\sum_{i \in S} c_i = \sum_{i \in S^c} c_i.\]
In addition, considering the coefficient of $\delta_{0, \{1, \ldots, n,x\}}$, we see
\[0 = \sum_{i=1}^n c_i.\]
This implies all $c_i = 0$. (Indeed, $c_i =\sum_{j \neq i}c_k$ and
$0 = \sum c_k$ forces $c_i = -c_i$.)
Hence, \eqref{as} implies all $a_S = 0$. Finally, given $S \subset \{1, \ldots, n\}$ with $|S| \geq 2$, considering the coefficient of $\delta_{0,S}$ shows
\[0 = b_S + \sum_{i \in S} c_i,\]
which implies $b_S = 0$ as well.
Thus, considering \eqref{vo}, we have $w = 0$.
\end{proof}
\begin{proof}[Proof of Proposition \ref{R1part} when $g=2$] 
    Let $w\in \R^1(\M_{2,n}^{\ct})$ be nonzero and write
    \[
    w = c\lambda_1 + w'
    \]
    where $w'$ does not lie in the span of $\lambda_1$. First we assume $w'\neq 0$. Then by Lemma \ref{g2ker}
    \[
    \varphi^* w = \varphi^* w'\neq 0.
    \]
    By the Gorenstein property in genus $1$ \cite{Tavakol}, there exists $\tilde{v}\in\R_{\FZ}^{n-1}(\M_{1,n+1}^{\ct})$ such that \[
    0 \neq \tilde{v}\cdot \varphi^*w=\varphi_*\tilde{v}\cdot w,
    \]
    where the equality is \eqref{pushpull}. 
    
    Now we assume $w'=0$ and $c\neq 0$. Let 
    \[
    \xi: \M_{2,1}^{\ct}\times \M_{0,n+1}^{\ct}\rightarrow \M_{2,n}^{\ct}
    \]
    be the map gluing the last marked point on each component. Set $z=\xi_*(\psi\otimes \mathrm{pt})\in \R_{\FZ}^{n}(\M_{2,n}^{\ct})$. Then
    \begin{equation} \label{zpair} w \cdot z =
    c\lambda_1\cdot z = c\xi_*(\xi^*\lambda_1\cdot (\psi\otimes \mathrm{pt}))=c\xi_*(\lambda_1\psi_1\otimes \mathrm{pt}).
    \end{equation}
    Above, $\lambda_1 \psi \in \R_{\FZ}^2(\M_{2,1}^{\ct}) \cong \qq$ lies in the socle degree and is non-zero, which can be seen by lifting $\lambda_1\psi$ to $\Mb_{2,1}$ and pairing with $\lambda_2$. By the geometric description of the socle in Section \ref{yesgorensteinsection}, it is clear that the map $\xi_*$ sends the generator of the socle of $\M_{2,1}^{\ct}$ to the generator of the socle of $\M_{2,n}^{\ct}$, so \eqref{zpair} is nonzero.
\end{proof}
Next we prepare to prove Proposition \ref{Rsminus1part} in genus $2$. The $g=2$ analogue of Lemma \ref{pushforwardsurjective} is the following.
\begin{lem}\label{g2pushforwardsurjective}
If $n\geq 1$, the group $\R_{\FZ}^{n}(\M_{2,n}^{\ct})$ is spanned by the image of 
\[\R_{\FZ}^{n-1}(\M_{1,n+1}^{\ct}) \cong \R_{\FZ}^{n-1}(\M_{1,n+1}^{\ct})\otimes \R_{\FZ}^0(\M^{\ct}_{1,1}) \to \R_{\FZ}^{n}(\M_{2,n}^{\ct})\]
together with the pushforward of $\psi \otimes \mathrm{pt} \in \R_{\FZ}^{n-1}(\M_{2,1}^{\ct}) \otimes \R_{\FZ}^0(\M^{\ct}_{0,n+1}) \to \R_{\FZ}^{n}(\M_{2,n}^{\ct})$. 
\end{lem}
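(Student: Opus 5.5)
We will deduce Lemma \ref{g2pushforwardsurjective} from Lemma \ref{theoremstarconsequence}, specialized to $g=2$, $2g-4+n=n$. That lemma tells us that $\R^n_{\FZ}(\M_{2,n}^{\ct})$ is spanned by decorated strata $[\Gamma,\gamma]$ in which all vertices are of type $(0,3)$ or $(1,1)$, except for exactly one special vertex of type $(0,4)$, $(1,2)$, $(2,0)$ or $(2,1)$. We split into cases according to the type of the special vertex.

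\textbf{Special vertex of genus $0$ or $1$.} Since the total genus is $2$ and the graph is a tree, the remaining genus must be carried by vertices of type $(1,1)$. So $\Gamma$ has at least one elliptic tail. If the special vertex is $(0,4)$, there are two $(1,1)$ leaves. If it is $(1,2)$, there is one further $(1,1)$ vertex, which is necessarily a leaf.

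Choose one such elliptic tail. Because the decoration has degree $0$, the stratum is $\varphi_*$ of the undecorated stratum of $\M_{1,n+1}^{\ct}$ obtained by deleting that tail and replacing it by the marking $x$. This undecorated stratum has codimension $n-1$, so it lies in the image of $\R^{n-1}_{\FZ}(\M_{1,n+1}^{\ct})$. One should check that $\varphi_*$ is well defined on $\FZ$-quotients. This holds because pushforwards of $3$-spin relations under gluing maps are again $3$-spin relations, and we use this freely, as the paper does in \eqref{pushpull}.

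\textbf{Special vertex $(2,0)$.} This vertex has valence one and no decoration, so all other vertices are $(0,3)$. Such a graph is a genus $0$ trivalent tree with the $n$ markings together with one extra leg attached to a genus $2$ vertex, i.e. the class $\xi_*(1\otimes \mathrm{pt}')$ with $\mathrm{pt}'$ a point class on $\M_{0,n+1}$. The codimension count works: it equals $(n-2)+1$. However, $\R^0(\M_2^{\ct})$ pulls back to $1$ on the $(2,1)$ side, so this is really the stratum where the genus $2$ vertex is unmarked. We need to rewrite it. Using WDVV on the genus $0$ part, move to a configuration where the genus $2$ vertex sits next to a $(0,3)$ vertex. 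The main point is then to relate the class of the $(2,0)$-tail to the $(2,1)$-vertex strata. We expect to use the genus $2$ relation expressing $\lambda_1$, or equivalently $\delta_{1,\emptyset}$ and $\psi$, on $\M_{2,1}^{\ct}$ (cf.\ \cite[Theorem 2.2(b)]{ArbarelloCornalba}). This shows the contribution is a combination of $\xi_*(\psi\otimes\mathrm{pt})$ and strata with an elliptic tail.

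\textbf{Special vertex $(2,1)$.} All other vertices are $(0,3)$, so the genus $0$ part is a maximally degenerate tree on $n+1$ points. By WDVV its class is $\mathrm{pt}$ in $\R^{n-2}(\M_{0,n+1})$. The decoration is a degree $1$ class on $\M_{2,1}^{\ct}$. Here $\R^1(\M_{2,1}^{\ct})$ is spanned by $\psi$ and $\delta_{1,\emptyset}$, since $\kappa_1$ and $\lambda_1$ are combinations of these by Corollary \ref{ctdivisors} and the Arbarello--Cornalba relation. Hence the stratum is a combination of $\xi_*(\psi\otimes\mathrm{pt})$ and $\xi_*(\delta_{1,\emptyset}\otimes\mathrm{pt})$. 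The latter has an elliptic tail and so lies in the image of $\varphi_*$.

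\textbf{Main obstacle.} We expect the $(2,0)$-vertex case to be the hardest step, together with ensuring that all rewriting uses only $3$-spin relations. The plan there is to express the unmarked genus $2$ vertex glued to a $(0,3)$ vertex as the pushforward of $[\M_{2,1}^{\ct}]$ times a boundary class, and then apply the same genus $2$ divisor relation, which is a $3$-spin relation by Gubarevich.
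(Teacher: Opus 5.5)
Your handling of the cases $(0,4)$, $(1,2)$ and $(2,1)$ is the paper's argument. In the first two an elliptic tail is present, so the stratum lies in the image of $\varphi_*$. For the $(2,1)$ vertex you use WDVV on the genus $0$ part together with the Arbarello--Cornalba relation expressing $\kappa_1$ via $\psi$ and $\delta_{1,\emptyset}$ on $\M_{2,1}^{\ct}$. Your remark that this relation is a $3$-spin relation by Corollary \ref{ctdivisors} is a nice extra bit of care.

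The problem is your case ``special vertex $(2,0)$''. It comes from misreading the notation and contains a false claim. In Lemma \ref{theoremstarconsequence}, the type of a vertex is $(g(v),n(v))$ with $n(v)$ the full valence (legs plus half-edges), exactly as in $\Mb_\Gamma=\prod_v \Mb_{g(v),n(v)}$. A $(2,0)$ vertex therefore has no edges and no legs. It occurs only for the one-vertex graph when $(g,n)=(2,0)$, so for $n\geq 1$ the case is empty. This is why the paper's proof lists only $(0,4)$, $(1,2)$, $(2,1)$. The graph you actually describe is an undecorated genus $2$ vertex joined by one edge to a trivalent genus $0$ tree. That is a vertex of type $(2,1)$ with degree-$0$ decoration, which part (4) of Lemma \ref{theoremstarconsequence} rules out. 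Your own count $(n-2)+1=n-1$ shows this class has codimension $n-1$, not $n$. So the assertion that ``the codimension count works'' is wrong, and the class does not lie in $\R^n_{\FZ}(\M_{2,n}^{\ct})$ at all.

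So the step you flag as the main obstacle does not exist. The plan you sketch for it could not succeed anyway: the $3$-spin relations are homogeneous, so nothing can rewrite a codimension $n-1$ class as a combination of codimension $n$ strata. Delete this case. The three remaining cases, as you wrote them, give a complete proof, and it coincides with the paper's.
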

\begin{proof}
By Lemma \ref{theoremstarconsequence}, classes in codimension $2g - 4 + n = n$ are generated by decorated graphs where all but one vertex have $(g(v), n(v)) = (0,3)$ or $(1,1)$ and 
one vertex has $(g(v), n(v)) = (0,4), (1,2)$ or $(2,1)$. If there are no $(2, 1)$ vertices, then such a graph has a $(1,1)$ vertex.

Meanwhile, there is one such graph that has no elliptic tails, namely
we take a $(2,1)$ vertex decorated with a codimension $1$ decoration and the rest of the graph is $(0,3)$ vertices. We can take the decoration to be $\psi_1$ because $\kappa_1$ is proportional to $\psi_1$ modulo boundary divisors on $\M_{2,1}^{\ct}$ by \cite[Theorem 2.2(b)]{ArbarelloCornalba}.
\end{proof}

 Lemma \ref{cx} holds when $g=2$ as well.
\begin{lem} \label{cx2}
When $g=2$ and $n \geq 2$, the complex \eqref{2tails} is exact.
\end{lem}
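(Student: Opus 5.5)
The plan is to imitate the proof of Lemma \ref{cx}, working in the boundary basis of $\R^1(\M_{1,n+1}^{\ct})$ used in the proof of Lemma \ref{g2ker}. The one new difficulty is that the target $\R^1(\M_{0,n+2})$ has relations among its boundary divisors. Throughout, let $x$ be the last marking of $\M_{1,n+1}^{\ct}$ and $y$ the extra marking on $\M_{0,n+2}$, as in Lemma \ref{cx}.

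\emph{Step 1 (normal form for $u$).} Take $u\in\ker(\alpha^*-\beta^*)$ and write
\[
u=\sum_A u_A\,\delta_{0,A}, \qquad A\subset\{1,\dots,n,x\},\ |A|\ge 2.
\]
This uses that the $\delta_{0,A}$ form a basis of $\R^1(\M_{1,n+1}^{\ct})$, as in Lemma \ref{g2ker}. From formula \eqref{phiv}, the image of $\varphi^*|_W$ contains:
\begin{itemize}
\item $\delta_{0,S}$ for every $S\subset\{1,\dots,n\}$ with $|S|\ge 2$ (from $b_S$);
\item $\delta_{0,S+x}+\delta_{0,S^c+x}$ for every $S$ with $1\in S$ and $S^c\neq\emptyset$ (from $a_S$);
\item the images of the $\psi_i$ (from $c_i$).
\end{itemize}
Subtracting a suitable $\varphi^*w$ with $w\in W$, I first kill all $u_A$ with $x\notin A$. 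I then use the $a_S$ terms to kill $u_{S+x}$ for $1\in S$, $S^c\neq\emptyset$. After this, $u$ is supported on the divisors $\delta_{0,A+x}$ with $1\notin A$, together with $\delta_{0,\{1,\dots,n,x\}}$. Since $\varphi^*w$ lies in $\ker(\alpha^*-\beta^*)$, the new $u$ still does. It then suffices to show that such a reduced $u$ either vanishes or lies in the image of $\varphi^*$, for instance as a combination of the pulled-back $\psi_i$ contributions.

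\emph{Step 2 (computing $\alpha^*-\beta^*$).} The map $\alpha$ glues an elliptic tail at $y$ on the genus-$1$ vertex. Hence $\alpha^*\delta_{0,A}$ is the divisor $D_{A\mid A^c\cup\{y\}}$ on $\M_{0,n+2}$, with the genus-$1$ side acquiring $y$. Likewise, $\beta^*\delta_{0,A}=\tau^*D_{A\mid A^c\cup\{y\}}$, where $\tau$ swaps $x$ and $y$. For $x\in A$, the term $\alpha^*\delta_{0,A}$ separates $x$ from $y$, while $\beta^*\delta_{0,A}$ puts $x$ and $y$ on the same side, namely the side $A^c\cup\{x\}$. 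So the condition $(\alpha^*-\beta^*)u=0$ becomes an explicit identity among boundary divisors of $\M_{0,n+2}$.

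\emph{Step 3 (the main obstacle).} Unlike the genus $\ge 2$ case, where \cite[Theorem 2.2]{ArbarelloCornalba} gives independence, the boundary divisors of $\M_{0,n+2}$ satisfy Keel's relations. So I cannot simply read off coefficients. To extract enough linear conditions on the $u_A$, I would instead intersect the identity with explicit test curves, namely F-curves in $\M_{0,n+2}$. The natural choice is the F-curves where $x$ and $y$ lie on distinct legs of the central $4$-pointed component. These pair to $\pm1$ or $0$ with the divisors $D_{B\mid B^c}$ in a controllable way, and they kill every divisor that does not separate the four leg-blocks compatibly. For each $A$ with $1\notin A$, I would choose the four blocks $A$, $\{x\}$, $\{y\}$ and the rest, so that only $u_{A+x}$ and a few $\psi$-type terms survive. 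This should force $u_{A+x}$ to be determined by the $c_i$, and hence the reduced $u$ to equal $\varphi^*$ of a combination of $\psi_i$ and $\delta_{1,S}$. The hypothesis $n\ge 2$ is needed so that these four-block F-curves exist.

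If the F-curve bookkeeping becomes unwieldy, the fallback is a dimension count. By Lemma \ref{g2ker}, $\dim\im\varphi^*=\dim\R^1(\M_{2,n}^{\ct})-1$. Separately, I would compute $\dim\ker(\alpha^*-\beta^*)$ using Keel's presentation of $\R^1(\M_{0,n+2})$ and the symmetry under $\tau$. Exactness would then follow from equality of the two numbers, since $\im\varphi^*\subset\ker(\alpha^*-\beta^*)$ by commutativity of \eqref{beq}.
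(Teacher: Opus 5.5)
Your Step~1 is correct. However, Step~2 contains two computational errors, and they lead you to test curves that carry no information. Write $N=\{1,\dots,n\}$, $N'=N\smallsetminus\{1\}$, $X=N\cup\{x\}$.

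The first error: for $x\in A\neq X$,
\[
\beta^*\delta_{0,A}=\tau^*D_{A\mid A^c\cup\{y\}}=D_{(A\smallsetminus\{x\})\cup\{y\}\,\mid\,A^c\cup\{x\}},
\]
which \emph{also} separates $x$ from $y$. The second error: $\delta_{0,X}$, which survives your normal form, is the image of $\alpha$ itself. So by excess intersection $\alpha^*\delta_{0,X}=-\psi_y$ and $\beta^*\delta_{0,X}=-\psi_x$; there is no divisor $D_{X\mid\{y\}}$.

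With $u=\sum_{\emptyset\neq A\subset N'}u_A\,\delta_{0,A+x}+u_X\,\delta_{0,X}$, the correct identity is
\[
(\alpha^*-\beta^*)u=\sum_{A}u_A\bigl(D_{A+x}-D_{A+y}\bigr)+u_X\bigl(\psi_x-\psi_y\bigr).
\]
Since $\beta^*=\tau^*\alpha^*$, every $\tau$-invariant curve class $F$ satisfies $F\cdot(\alpha^*-\beta^*)u=F\cdot\alpha^*u-\tau_*F\cdot\alpha^*u=0$ identically. Your F-curves, with blocks $A,\{x\},\{y\},N\smallsetminus A$, are $\tau$-invariant. Concretely, $D_{A+x}$ and $D_{A+y}$ each meet them in degree $1$, and so do $\psi_x$ and $\psi_y$, so everything cancels. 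Step~3 therefore extracts no conditions at all. The fallback (``compute $\dim\ker(\alpha^*-\beta^*)$'') restates the lemma rather than giving an argument.

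The missing step is to finish the reduction \emph{before} applying $\alpha^*-\beta^*$, as the paper does; then Keel's relations never interfere. Work modulo your $a_S$- and $b_S$-terms, with all sums below over nonempty $A\subset N'$. One finds
\[
\varphi^*\psi_1\equiv\delta_{0,X}-\sum_{A}\delta_{0,A+x},\qquad \varphi^*(\psi_i-\psi_1)\equiv 2\sum_{A\ni i}\delta_{0,A+x}\quad (i\in N').
\]
These let you kill $u_X$ and every $u_{\{j\}}$ (the coefficient of $\delta_{0,\{j,x\}}$). Then $u$ is supported on $\delta_{0,A+x}$ with $|A|\geq 2$. In that case $(\alpha^*-\beta^*)u=\sum u_A(D_{A+x}-D_{A+y})$ is a combination of pairwise distinct divisors $D_P$ with $1\notin P$ and $|P|\geq 3$. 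These are linearly independent on $\Mb_{0,n+2}$: the paper uses \cite[Lemma 3.9]{ArbarelloCornalba}, and they are the exceptional divisors of Kapranov's blow-up model given by $\psi_1$. Hence $u=0$.

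If you prefer to keep test curves, you must break the $x\leftrightarrow y$ symmetry. For instance, take the F-curve with blocks $A\cup\{x\},\{y\},B,R$, where $A,B\subset N'$ are nonempty and disjoint and $1\in R$. It yields $u_{A\cup B}=u_A+u_B+u_X$. This cuts the solution space down to the $n$-dimensional span of the reduced $\varphi^*\psi_i$.
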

%\hannah{It looks like the argument for peeling off the coefficient of $\delta_{0,\{1, \ldots, n,x\}}$ only works when $n \geq 2$, so I added that to the statement and we can say the cases $g = 2, n \leq 2$ have been verified by computer in the proof of Prop 17}
\begin{proof}
The idea is similar to the proof of Lemma \ref{cx}, except there are more relations in low genus. Suppose for contradiction that there exists $u \in \ker(\alpha^* - \beta^*) \subset  \R^1(\M_{1,n+1}^{\ct})$ such that $u \notin \im \varphi^*$.
Since $\R^1(\M_{1,n+1}^{\ct})$ is spanned by the boundary divisors $\delta_{0, S}$, we can write
\[u = \sum_{|S| \geq 2} c_S \delta_{0,S}.\]
Note that $\varphi^*\psi_i = \psi_i = \sum_{i \in S} \delta_{0,S} = \delta_{0, \{i,x\}} + \ldots$.
Replacing $u$ with $u -  \sum_{i = 1}^n c_{\{i,x\}}\varphi^*\psi_i$, we can assume $c_{S} = 0$ when $x \in S$ and $|S| = 2$. Next, observe that, when written in terms of boundary divisors, \[\varphi^*\left(\delta_{1,\emptyset} + \sum_{i=1}^n \psi_i\right)= \delta_{1, \emptyset} - \psi_x + \sum_{i=1}^n \psi_i = (n-1)\delta_{0, \{1, \ldots, n, x\}} + \ldots\]
has coefficient $n-1$ on $\delta_{0, \{1, \ldots, n, x\}}$ and coefficient $0$ on all $\delta_{0, \{i,x\}}$. Thus, by replacing $u$ with $u - \frac{1}{n-1}c_{\{1, \ldots, n, x\}} \varphi^*\left(\delta_{1, \emptyset} + \sum_{i=1}^n \psi_i \right)$, we can assume $c_S = 0$ when $x \in S$ and $|S| = 2$ and when $S = \{1, \ldots, n,x\}$.
Next, replacing $u$ with
\[u - \sum_{x,1 \in  S}c_S \cdot \varphi^*\delta_{1, S - x}\]
we can assume that $c_S = 0$ when $x, 1 \in S$, or $|S| = 2$, or $S = \{1, \ldots, n,x\}$. (Note that if $1 \in S$, and $S \neq \{1, \ldots, n, x\}$, then $\varphi^*\delta_{1,S - x} = \delta_{0, S} + \delta_{1,S} = \delta_{0, S} + \delta_{0, S^c + x}$; since $1 \notin S^c + x$, this second term does not change other coefficients we have already fixed to be zero.)
Finally, replacing $u$ with
\[u - \sum_{x \notin S} c_S \cdot \varphi^* \delta_{0,S}\]
we may also assume that $c_S = 0$ when $x \notin S$.

In summary, after adjusting $u$ by elements in $\im \varphi^*$, 
we may assume that $u$ has the form
\[u = \sum_{\substack{x \in S, 1 \notin S \\ |S| \geq 3 \\ S^c \neq \emptyset}} c_S \delta_{0,S}.\]
Now, since $u \in \ker(\alpha^* - \beta^*)$, we have
\[0 = (\alpha^* - \beta^*)(v) = \sum_{\substack{x \in S, 1 \notin S \\ |S| \geq 3 \\ S^c \neq \emptyset}} c_S \delta_{0, S} - \sum_{\substack{x \in S, 1 \notin S \\ |S| \geq 3 \\ S^c \neq \emptyset}} c_S \delta_{0, S -x + y}.\]
Note that in the terms $\delta_{0,P}$ appearing above, $|P| \geq 3$ and $1 \in P^c$. 
On $\Mb_{0,n+2}$, the $\delta_{0,P} = \delta_{0, P^c}$ with $1 \in P^c$ and $|P| \geq 3$ are independent by \cite[Lemma 3.9]{ArbarelloCornalba}. It follows that all $c_S = 0$. Thus, $u = 0 \in \im \varphi^*$, which is a contradiction.
\end{proof}

\begin{proof}[Proof of Proposition \ref{Rsminus1part} when $g=2$]
We have computationally checked the cases $n \leq 1$, so assume $n \geq 2$.
Let $v\in \R_{\FZ}^n(\M_{2,n}^{\ct})$. By Lemma \ref{g2pushforwardsurjective}, we can write 
\[v=\varphi_*\tilde{v}+c\xi_*(\psi\otimes \mathrm{pt}),\] where $\tilde{v}\in \R_{\FZ}^{n-1}(\M_{1,n+1}^{\ct})$ and $c$ is a constant. Note that $\varphi_*\tilde{v}\cdot\lambda_1= \tilde{v} \cdot\varphi^*\lambda_1=0$. Therefore, 
\[
v\cdot \lambda_1=c\xi_*(\psi\otimes \mathrm{pt})\cdot \lambda_1.
\]
If $c\neq 0$, then $v\cdot \lambda_1\neq 0$ by the same argument as in the $g=2$ case of the proof of Proposition \ref{R1part}. Now we can assume $c=0$, and in this case the proof is exactly the same as in the case when $g\geq 3$, using Lemma \ref{cx2}.
\end{proof}

\noindent Combining Propositions \ref{R1part} and \ref{Rsminus1part}
shows that the pairing
\[
\R^1_{\FZ}(\M_{g,n}^{\ct})\times \R_{\FZ}^{2g-4+n}(\M_{g,n}^{\ct})\rightarrow \R^{2g-3+n}_{\FZ}(\M_{g,n}^{\ct})\cong \qq
\]
is perfect. The perfectness of the pairing forbids further relations, thereby proving Theorems \ref{Pixtonminus1} and \ref{socleminus1}.
\qed

\section{The Gorenstein property and invisibility} \label{prelim}
%Recall that we say that $\R^*(\M_{g,n}^{\ct})$ is Gorenstein if the pairing
%\begin{equation}\label{pairing}
%    \R^i(\M_{g,n}^{\ct})\times \R^{2g-3+n-i}(\M_{g,n}^{\ct})\rightarrow \R^{2g-3+n}(\M_{g,n}^{\ct})\cong \qq
%\end{equation}
%is perfect for all $i$, and likewise for the analogous statement in cohomology. 
In this section, we study some generalities about the failure of the Gorenstein property. The failure of the Gorenstein property in Chow follows from its failure in cohomology, by Proposition \ref{chowvscoh}. We will thus work in cohomology here.
\begin{definition}
    We say that a nonzero class $\alpha \in \R\H^{2i}(\M_{g,n}^{\ct})$ is \emph{invisible} if
    \[
    \alpha\cdot \beta = 0
    \]
    for all $\beta \in \R\H^{4g-6+2n-2i}(\M_{g,n}^{\ct})$.
\end{definition}
\noindent The tautological ring is not Gorenstein if and only if there exists an invisible class.

 We show that certain natural operations on compact type moduli spaces send invisible classes to invisible classes. This structure arises because of certain operations that preserve the socle.
Let $\pi:\M_{g,n+1}^{\ct}\rightarrow \M_{g,n}^{\ct}$ be the map forgetting the last marked point.
Recall that the socle $\RH^{4g-6+2n}(\M_{g,n}^{\ct})$ is generated by the fundamental class of the locus parametrizing maximally degenerate $n+g$ pointed rational curves with $g$ elliptic tails. It follows that
\begin{equation}\label{forgetpointiso}
\pi_*: \R\H^{4g-4+2n}(\M_{g,n+1}^{\ct})\xrightarrow{\cong} \R\H^{4g-6+2n}(\M_{g,n}^{\ct})
\end{equation}
is an isomorphism.

\begin{lem}\label{morepoints}
    Suppose $\alpha\in \R\H^{2i}(\M_{g,n}^{\ct})$ is invisible. Then $\pi^*\alpha \in \R\H^{2i}(\M_{g,n+1}^{\ct})$ is invisible.
\end{lem}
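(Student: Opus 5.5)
The plan is to prove two things: that $\pi^*\alpha$ pairs to zero with every tautological class of complementary degree, and that $\pi^*\alpha\neq 0$. Both should follow from the projection formula combined with the socle isomorphism \eqref{forgetpointiso}.

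\emph{Vanishing of the pairing.} Let $\beta\in\R\H^{4g-4+2n-2i}(\M_{g,n+1}^{\ct})$ be arbitrary; this is the complementary degree on $\M_{g,n+1}^{\ct}$. The projection formula for the proper map $\pi$ gives
\[
\pi_*(\pi^*\alpha\cdot\beta)=\alpha\cdot\pi_*\beta .
\]
Here $\pi_*\beta$ lies in $\R\H^{4g-6+2n-2i}(\M_{g,n}^{\ct})$, because pushforward along the forgetful map preserves tautological classes and lowers degree by $2$. Since $\alpha$ is invisible, the right-hand side vanishes. The class $\pi^*\alpha\cdot\beta$ lies in the socle $\R\H^{4g-4+2n}(\M_{g,n+1}^{\ct})$, and $\pi_*$ is an isomorphism on socles by \eqref{forgetpointiso}. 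Therefore $\pi^*\alpha\cdot\beta=0$.

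\emph{Nonvanishing of $\pi^*\alpha$.} It remains to show $\pi^*\alpha\neq 0$. I would use $\pi_*(\pi^*\alpha\cdot\psi_{n+1})=\alpha\cdot\pi_*\psi_{n+1}$. For the compact type forgetful map, $\pi_*\psi_{n+1}=2g-2+n$ times the fundamental class; this is the dilaton equation, and it restricts from $\Mb_{g,n}$ to the open set. We may assume $2g-2+n>0$ (stability), so
\[
\pi_*(\pi^*\alpha\cdot\psi_{n+1})=(2g-2+n)\,\alpha\neq 0 .
\]
Hence $\pi^*\alpha\neq 0$. Alternatively, one could use that $\pi$ has a section, for instance by gluing a $(0,3)$ vertex at a marking, but the dilaton argument is cleaner.

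\emph{Main obstacle.} The only real care needed is in the nonvanishing step, since invisibility requires a nonzero class by definition. There I must justify that the dilaton identity holds in cohomology on compact type. This follows by restricting the identity on $\Mb_{g,n+1}$, using that $\pi^{-1}(\M_{g,n}^{\ct})=\M_{g,n+1}^{\ct}$, so pushforward commutes with restriction. Everything else is formal.
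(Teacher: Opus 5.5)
Your proposal is correct and follows the paper's proof essentially step for step. You show $\pi^*$ is injective via the dilaton identity $\pi_*(\pi^*\gamma\cdot\psi_{n+1})=(2g-2+n)\gamma$, then combine the projection formula, the invisibility of $\alpha$, and the socle isomorphism \eqref{forgetpointiso}. The paper additionally reduces to $g\geq 2$ at the start, which your argument does not need, since there are no invisible classes when $g=0,1$.
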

\begin{proof}
Because the Gorenstein property holds for $g=0,1$, we can assume $g\geq2$.  For any $\gamma\in \R\H^{2i}(\M_{g,n}^{\ct}),$ we have
\[
 \gamma = \frac{1}{2g -2+n} \pi_*((\pi^* \gamma) \cdot \psi_{n+1}).
\]
Therefore, $\pi^*$ is injective. Let $\beta\in \R\H^{4g-4+2n-2i}(\M_{g,n+1}^{\ct})$. We have
\[
\pi_*(\pi^*\alpha\cdot \beta)=\alpha\cdot \pi_*\beta=0,
\]
where the second equality follows from the assumption that $\alpha$ is invisible.
 Because $\pi^*\alpha$ is not zero and $\pi^*\alpha\cdot \beta$ is in the socle degree, the claim follows from the fact that the map in \eqref{forgetpointiso} is an isomorphism.
 %By \cite[Section 4.1.2]{FaberPandharipande}, the socle for $\M_{g,n}^{\ct}$ is represented by the locus of maximally degenerate $g+n$ pointed rational curves with $g$ elliptic tails attached to $g$ marked points. Looking at these representatives, we see the pushforward is nonzero.
\end{proof}

Invisible classes also play well with pushforward along gluing maps
\[\varphi: \M_{g,n}^{\ct} \times \M_{g',n'}^{\ct} \to \M_{g+g',n+n'-2}^{\ct}.\]
Given $\alpha \in \H^*(\M_{g,n}^{\ct})$ and $\gamma \in \H^*(\M_{g',n'}^{\ct})$, we write $\alpha \otimes \gamma \in \H^*(\M_{g,n}^{\ct} \times \M_{g',n'}^{\ct})$ for the product of the pullbacks of these two classes along the projection maps. For $\beta\in \H^*(\M_{g,n}^{\ct}\times \M^{\ct}_{g',n'})$, we write $\beta \in \RH^*(\M_{g,n}^{\ct}\times \M^{\ct}_{g',n'})$ if it admits a tautological K\"unneth decomposition.
Because the tautological ring vanishes beyond the socle degree, we have
\[   \R\H^{4(g+g') - 12 + 2(n+n')}(\M^{\ct}_{g,n} \times \M^{\ct}_{g',n'}) \cong \R\H^{4g - 6 + 2n}(\M^{\ct}_{g,n}) \otimes 
    \R\H^{4g' - 6 + 2n'}(\M^{\ct}_{g',n'}) \cong \qq.
\]
When we glue together two maximally degenerate genus $0$ curves, the result is a maximally degenerate genus $0$ curve.
Thus, using the geometric description of the socle, we see that the pushforward map
\begin{equation}\label{gluingmap}
  \varphi_*: 
\R\H^{4g - 6 + 2n}(\M^{\ct}_{g,n}) \otimes 
    \R\H^{4g' - 6 + 2n'}(\M^{\ct}_{g',n'}) 
 \to
\R\H^{4(g'+n')-6+2(n+n'-2)}(\M_{g+g',n+n'-2}^{\ct}),
\end{equation}
is an isomorphism.
If $\alpha \in \R\H^{2i}(\M_{g,n}^{\ct})$ is invisible, it readily follows that, \emph{if it is nonzero}, then $\varphi_*(\alpha \otimes \gamma)$ is invisible for any $\gamma \in \R\H^*(\M_{g',n'}^{\ct})$. Indeed, 
for any $\beta$ in complementary degree to $\varphi_*(\alpha \otimes \gamma)$, we have
\[\varphi_*(\alpha \otimes \gamma) \cdot \beta = \alpha \otimes \gamma \cdot \varphi^*\beta  
= 0.\]
Above, the first equality follows from the fact that \eqref{gluingmap} is an isomorphism;
the second equality follows because the only nonzero terms come from K\"unneth components of $\varphi^*\beta$ where the first factor lies in complementary degree to $\alpha$, and we are assuming $\alpha$ is invisible.

In general, it may be difficult to determine when the pushforward 
$\varphi_*(\alpha \otimes \gamma)$ is non-zero. One sufficient criterion is if 
$\varphi^*\varphi_*(\alpha \otimes \gamma) \neq 0$. This class can be computed by considering the fiber product of the gluing map with itself and using the excess intersection formula:
\begin{equation} \label{pp} \varphi^*\varphi_*(\alpha \otimes \gamma) = \begin{cases} 
-(\alpha \psi_p) \otimes \gamma - \alpha \otimes (\psi_{p'} \gamma) 
& \text{if $n' > 1$} \\
(\alpha (\delta_{g', \emptyset} -\psi_p)) \otimes \gamma - \alpha \otimes (\psi_{p'} \gamma)
&\text{if $n' = 1$.}
\end{cases}
\end{equation}
One case where we can see such classes are non-zero is when $\gamma=1$ and $(g',n') \neq (1,1)$, so that $\psi_{p'} \neq 0$. %Taking $\gamma = 1$ above, we see that if $\alpha$ is invisible, then $\varphi_*(\alpha\otimes 1)$ is also invisible.
Another case where we can verify the above pushforward is non-zero is when $\alpha$ is pulled back from a moduli space with less markings. This is the idea behind the following result.

\begin{lem} \label{gluelem}
Let $\varphi$ be the gluing map  \eqref{gluingmap} that glues $p$ and $p'$
and let $\pi: \M_{g,n}^{\ct} \to \M_{g,n-1}^{\ct}$ be the map that forgets the marking $p$.
Suppose $\alpha \in \R\H^{2i}(\M_{g,n-1}^{\ct})$ is invisible. Then for any nonzero $\gamma \in \R\H^{2j}(\M_{g',n'}^{\ct})$, we have $\varphi_*(\pi^*\alpha \otimes \gamma) \in \R\H^{2i + 2j + 2}(\M_{g+g',n+n'-2})$ is invisible.
\end{lem}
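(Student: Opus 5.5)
The plan is to split the claim into two parts: $\varphi_*(\pi^*\alpha\otimes\gamma)$ pairs to zero with everything of complementary degree, and $\varphi_*(\pi^*\alpha\otimes\gamma)$ is nonzero. The first part is the general observation made just before the lemma. We may assume $g\geq 2$, since the tautological rings are Gorenstein in genus $0,1$ and so no invisible class exists there.

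\textbf{Vanishing of all pairings.} By Lemma \ref{morepoints}, $\pi^*\alpha\in\RH^{2i}(\M_{g,n}^{\ct})$ is invisible. Let $\beta$ have degree complementary to $\varphi_*(\pi^*\alpha\otimes\gamma)$. We compute
\[
\varphi_*(\pi^*\alpha\otimes\gamma)\cdot\beta=\varphi_*\bigl((\pi^*\alpha\otimes\gamma)\cdot\varphi^*\beta\bigr).
\]
Take a tautological K\"unneth decomposition of $\varphi^*\beta$. The top-degree product lives in the socle of the product, which is $\RH^{\mathrm{top}}(\M^{\ct}_{g,n})\otimes\RH^{\mathrm{top}}(\M^{\ct}_{g',n'})$. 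So the only K\"unneth terms that can contribute have first factor of degree complementary to $\pi^*\alpha$. Each such term is killed by the invisibility of $\pi^*\alpha$. Hence every pairing vanishes, and it remains to show nonvanishing.

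\textbf{Nonvanishing.} I would prove the stronger statement $\varphi^*\varphi_*(\pi^*\alpha\otimes\gamma)\neq 0$, using the excess intersection formula \eqref{pp}. For $n'>1$ it gives
\[
\varphi^*\varphi_*(\pi^*\alpha\otimes\gamma)=-(\pi^*\alpha\cdot\psi_p)\otimes\gamma-\pi^*\alpha\otimes(\psi_{p'}\gamma).
\]
For $n'=1$ the first term is replaced by $(\pi^*\alpha\cdot(\delta_{g',\emptyset}-\psi_p))\otimes\gamma$. The idea is to apply $\pi_*\otimes\mathrm{id}$, pushing forward along the forgetful map in the first factor only, and to check that the result is nonzero.

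We treat the terms in turn.
\begin{itemize}
\item By the projection formula, $\pi_*(\pi^*\alpha\cdot\psi_p)=\alpha\cdot\pi_*\psi_p=(2g-3+n)\,\alpha$. Here $\pi_*\psi_p=2g-2+(n-1)$ on $\M_{g,n-1}^{\ct}$.
\item $\pi_*\pi^*\alpha=\alpha\cdot\pi_*1=0$, since $\pi$ has one-dimensional fibres.
\item For $n'=1$: $\pi_*(\pi^*\alpha\cdot\delta_{g',\emptyset})=\alpha\cdot\pi_*\delta_{g',\emptyset}$. The divisor $\delta_{g',\emptyset}$ (genus $g'$ tail carrying no markings, with $p$ on the other side) maps under $\pi$ onto a locus of strictly smaller dimension, namely a boundary divisor of $\M^{\ct}_{g,n-1}$. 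Hence $\pi_*\delta_{g',\emptyset}=0$.
\end{itemize}

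Altogether, in both cases,
\[
(\pi_*\otimes\mathrm{id})\,\varphi^*\varphi_*(\pi^*\alpha\otimes\gamma)=-(2g-3+n)\,\alpha\otimes\gamma.
\]
This is nonzero: $2g-3+n>0$ because $g\geq 2$, and $\alpha\neq 0$, $\gamma\neq0$, so $\alpha\otimes\gamma\neq0$ by the K\"unneth formula with $\qq$-coefficients. Therefore $\varphi_*(\pi^*\alpha\otimes\gamma)\neq0$, and combined with the first part it is invisible.

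\textbf{Main obstacle.} The delicate step is applying \eqref{pp} correctly. In particular, one must make sure that in the $n'=1$ case the extra boundary term $\delta_{g',\emptyset}$ really pushes forward to zero under $\pi$ and does not reintroduce something that cancels the $\psi_p$ contribution. The other terms are routine consequences of the projection formula and the fibre-dimension count.
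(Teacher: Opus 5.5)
Your proposal follows the paper's proof. Like the paper, you use the preceding discussion (with Lemma~\ref{morepoints}) for the vanishing of all pairings, reduce nonvanishing to $\varphi^*\varphi_*(\pi^*\alpha\otimes\gamma)\neq 0$ via \eqref{pp}, and push forward along $\pi$ to obtain a nonzero multiple of $\alpha$. The one small difference is how you dispose of the second term $\pi^*\alpha\otimes(\psi_{p'}\gamma)$. You push the whole expression forward along $\pi\times\mathrm{id}$, so that $\pi_*\pi^*\alpha=0$ kills it. The paper instead relies on the two terms having different K\"unneth bidegrees.

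However, your claim that $\pi_*\delta_{g',\emptyset}=0$ in the case $n'=1$ is false in one edge case: $g'=g$ and $n=2$. There, $\delta_{g,\emptyset}\subset\M^{\ct}_{g,2}$ is the divisor where $p$ and the remaining marking sit together on a rational component. This divisor is the image of a section of $\pi$, and $\pi$ maps it isomorphically onto $\M^{\ct}_{g,1}$. So $\pi_*\delta_{g,\emptyset}=1$, not $0$: the image is not of smaller dimension. In that case your formula should read $(\pi_*\otimes\mathrm{id})\,\varphi^*\varphi_*(\pi^*\alpha\otimes\gamma)=\bigl(1-(2g-1)\bigr)\,\alpha\otimes\gamma=(2-2g)\,\alpha\otimes\gamma$. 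This is still nonzero because $g\geq 2$, so the conclusion survives.

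In every other case your fibre-dimension argument is correct. These are $g'<g$, and $g'=g$ with $n\geq 3$; in both, the genus $g-g'$ side stays stable after forgetting $p$. This is precisely the step you flagged as delicate: the extra boundary term does contribute in this edge case, it just does not cancel the $\psi_p$ contribution.
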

\begin{proof}
   We can assume $g\geq 2$, as otherwise the assumption is vacuous. By the discussion above, it suffices to show that
   $\varphi^*\varphi_*(\pi^*\alpha \otimes \gamma) \neq 0$. By \eqref{pp} (with $\alpha$ replaced by $\pi^*\alpha$) it suffices to show that $\pi^*\alpha \cdot \psi_p \neq 0$ if $n' > 1$ or $\pi^*\alpha \cdot (\delta_{g',\emptyset} - \psi_p) \neq 0$ if $n' = 1$.
To see this class is non-zero, we consider its pushforward along $\pi$: in either case, we obtain a non-zero multiple of $\alpha$. 
\end{proof}

Applying Lemma \ref{gluelem} when $(g', n') = (1,1)$ and $\gamma = 1$, we obtain:
\begin{lem}\label{moregenus}
If $\R\H^*(\M_{g-1,n}^{\ct})$ is not Gorenstein, then $\R\H^*(\M_{g,n}^{\ct})$ is not Gorenstein.
\end{lem}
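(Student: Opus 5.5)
Since $\R\H^*(\M_{g-1,n}^{\ct})$ is not Gorenstein, we may fix an invisible class $\alpha \in \R\H^{2i}(\M_{g-1,n}^{\ct})$. The plan is to transport $\alpha$ to $\M_{g,n}^{\ct}$ in two steps: first pull it back along a forgetful map, then glue on an elliptic tail. Lemma \ref{gluelem} is designed to show that this composite remains invisible.

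Concretely, we let $\pi:\M_{g-1,n+1}^{\ct}\to \M_{g-1,n}^{\ct}$ forget the extra marking $p$. We then let
\[
\varphi:\M_{g-1,n+1}^{\ct}\times \M_{1,1}^{\ct}\to \M_{g,n}^{\ct}
\]
glue $p$ to the unique marking $p'$ of an elliptic tail. We apply Lemma \ref{gluelem} with $(g',n')=(1,1)$ and $\gamma=1\in \R\H^0(\M_{1,1}^{\ct})$, which is nonzero. The lemma then says that $\varphi_*(\pi^*\alpha\otimes 1)\in \R\H^{2i+2}(\M_{g,n}^{\ct})$ is invisible. In particular, this class is nonzero and pairs to zero with every class of complementary degree. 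So $\R\H^*(\M_{g,n}^{\ct})$ is not Gorenstein.

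The main point to check is that Lemma \ref{gluelem} really covers the case $n'=1$ with $\gamma=1$. In this case $\psi_{p'}=0$ on $\M_{1,1}^{\ct}$, so the second term in \eqref{pp} vanishes. We are left with needing $\pi^*\alpha\cdot(\delta_{1,\emptyset}-\psi_p)\neq 0$. The proof of Lemma \ref{gluelem} handles this by pushing forward along $\pi$. Since $\pi_*\psi_p=2(g-1)-2+n$ and $\pi_*\delta_{1,\emptyset}=0$, the pushforward is a nonzero multiple of $\alpha$.

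The multiple is nonzero provided $2g-4+n\neq 0$, which holds whenever $g-1\geq 2$. This is automatic, since for $g-1\leq 1$ the ring is Gorenstein by Keel and Tavakol, so the hypothesis would be vacuous. Finally, since $\alpha\otimes 1$ is tautological, the class $\varphi_*(\pi^*\alpha\otimes 1)$ lies in $\R\H^*(\M_{g,n}^{\ct})$.
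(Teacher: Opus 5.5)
Your proposal is correct and is exactly the paper's argument: Lemma \ref{moregenus} is deduced there by applying Lemma \ref{gluelem} with $(g',n')=(1,1)$ and $\gamma=1$. Since that lemma is stated for arbitrary $(g',n')$, citing it already completes the proof, and your check of the case $n'=1$ is optional.

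If you want that check to be self-contained, be careful with the double-point term. In $\varphi^*\varphi_*(\pi^*\alpha\otimes 1)$ this term pulls the class back along the \emph{other} branch of \eqref{beq}, so it is not literally $(\pi^*\alpha\cdot\delta_{1,\emptyset})\otimes 1$, and its image under $\pi_*$ need not vanish. For example, for $\alpha=\delta_{1,\emptyset}\in\RH^2(\M_3^{\ct})$ one gets $\pi_*\varphi^*\varphi_*(\pi^*\alpha\otimes 1)=-6\alpha$ rather than $-4\alpha$. So an extra step is needed to rule out cancellation with $-(2g-4+n)\alpha$. One way: the correction has the form $BA$, where $AB$ is the operator $\pi_*\varphi^*\varphi_*(\pi^*(\cdot)\otimes 1)$ one genus lower. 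Induction on $g$ then shows all eigenvalues of this operator are at most $-(2g-4+n)<0$, so $\pi_*\varphi^*\varphi_*(\pi^*\alpha\otimes 1)\neq 0$.
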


In order to prove that $\RH^*(\M_{g,n}^{\ct})$ is not Gorenstein for $2g+n\geq 12$, it suffices to show it is not Gorenstein in the cases $2g+n-12=0$ by Lemmas \ref{morepoints} and \ref{moregenus}. This idea is illustrated in Figure \ref{proofscheme}.
Indeed, once invisible classes are found at the pairs $(g,n)$ marked by red boxes, we apply Lemma
\ref{moregenus} (indicated by the purple horizontal arrow below) followed by
Lemma \ref{morepoints} (indicated by red vertical arrows) to see that all boxes above and to the right are also red. 
The case $(g,n)=(2,8)$ (dark red) was established by Petersen \cite{Petersen}. The remaining cases will be dealt with in Section \ref{2gn12}.
\begin{figure}
\begin{tikzpicture}
\begin{axis}[
    xmin=-0, xmax=9.5,
    ymin=-0, ymax=12,
    axis lines=center,
    axis line style={->},
    xlabel={$g$},
    ylabel={$n$},
    xtick={0,1,...,9},
    ytick={0,1,...,11},
    xticklabels={-1,...,8},
    yticklabels={-1,...,11},
    xticklabel style={xshift=-0.35cm},
    yticklabel style={yshift=-0.25cm},
    grid=both,
    grid style={line width=.1pt, draw=black},    
    minor tick num=0,
    enlargelimits={abs=0.5},
    axis on top,
    clip=false
]
% Black tiles
\foreach \i in {0,...,2} {
    \addplot [fill=black!10!white] coordinates {(0,\i) (0,\i+1) (1,\i+1) (1,\i)};
}
\foreach \i in {0} {
    \addplot [fill=black!10!white] coordinates {(1,\i) (1,\i+1) (2,\i+1) (2,\i)};
}

% Green Tiles
\foreach \i in {3,...,10} {
    \addplot [fill=green!30!white] coordinates {(0,\i) (0,\i+1) (1,\i+1) (1,\i)};
}
\foreach \i in {1,...,10} {
    \addplot [draw=black,fill=green!30!white] coordinates {(1,\i) (1,\i+1) (2,\i+1) (2,\i)};
}
\foreach \i in {0,...,7} {
    \addplot [draw=black,fill=green!30!white] coordinates {(2,\i) (2,\i+1) (3,\i+1) (3,\i)};
}
\foreach \i in {0,...,5} {
    \addplot [draw=black,fill=green!30!white] coordinates {(3,\i) (3,\i+1) (4,\i+1) (4,\i)};
}
\foreach \i in {0,...,3} {
    \addplot [draw=black,fill=green!30!white] coordinates {(4,\i) (4,\i+1) (5,\i+1) (5,\i)};
}
\foreach \i in {0,...,1} {
    \addplot [draw=black,fill=green!30!white] coordinates {(5,\i) (5,\i+1) (6,\i+1) (6,\i)};
}

% dark red boxes
\foreach \i in {0,...,3} {
    \addplot [draw=black,fill=red!80!black] coordinates {(3+\i,6-2*\i) (3+\i,6-2*\i+1) (4+\i,6-2*\i+1) (4+\i,6-2*\i)};
}

\foreach \i in {-1} {
    \addplot [draw=black,fill=red!50!black] coordinates {(3+\i,6-2*\i) (3+\i,6-2*\i+1) (4+\i,6-2*\i+1) (4+\i,6-2*\i)};
}

\draw[color=red, thick, ->] (2.5, 8.5) -- (2.5, 11.5);
\draw[color=red, thick, ->] (3.5, 6.5) -- (3.5, 11.5);
\draw[color=red, thick, ->] (4.5, 4.5) -- (4.5, 11.5);
\draw[color=red, thick, ->] (5.5, 2.5) -- (5.5, 11.5);
\draw[color=red, thick, ->] (6.5, 0.5) -- (6.5, 11.5);
\draw[color=red, thick, ->] (7.5, 0.5) -- (7.5, 11.5);
\draw[color=red, thick, ->] (8.5, 0.5) -- (8.5, 11.5);
\draw[color=violet, thick,->] (6.5, 0.5) -- (9.5, .5);

\end{axis}
\end{tikzpicture}
\caption{Lemmas \ref{morepoints} and \ref{moregenus} reduce the proof of Theorem \ref{Gortheorem} to the cases when $2g+n=12$.}\label{proofscheme}
\end{figure}

\section{The tautological ring when \texorpdfstring{$2g+n=12$}{2g+n=12}}\label{2gn12}
Here, we show that $\R\H^*(\M_{g,n}^{\ct})$ is not Gorenstein for $(g,n)=(6,0), (5,2), (4,4),$ and $(3,6)$, thereby proving Theorem \ref{Gortheorem}. Then, we will prove Theorem \ref{Pixtontheorem}. 
%The basic strategy for proving Theorem \ref{Pixtontheorem} is to show that the $3$-spin relations are complete on $\Mb_{g,n}$ for $(g,n)$ in the statement of the Theorem, and then use Lemma \ref{bartoct}. The approach is only possible because the Chow and cohomology ring of $\Mb_{g,n}$ is entirely tautological for small $g$ and $n$ \cite{CL-CKgP}, which implies that the tautological ring of $\Mb_{g,n}$ is Gorenstein.

 %By Proposition \ref{yesgorenstein} and Lemmas \ref{morepoints} and \ref{moregenus}, this will finish the proof of Theorem \ref{Gortheorem}.

\subsection{Genus 5 and 6}
The cases $g=5,6$ are simplest, so we treat them first.
%In \cite[page 122]{Pixton}, Pixton predicted that $\dim \R\H^8(\M_6^{\ct})=71$, but $\dim \R\H^{10}(\M^{\ct}_6)=72$, using the $3$-spin relations. Similarly, he predicted $\dim \R\H^{8}(\M_{5,2}^{\ct})=636$ and $\dim \R\H^{10}(\M_{5,2}^{\ct})=637$ \samc{Pixton did not predict the latter (he only did the mod $\ss_2$ calculation.}.
\begin{prop}\label{5and6}
    The tautological rings $\RH^*(\M_{6}^{\ct})$ and $\RH^*(\M_{5,2}^{\ct})$ are not Gorenstein.
\end{prop}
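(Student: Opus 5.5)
To prove Proposition \ref{5and6}, I would show that in both cases the ranks of $\RH^{2i}$ are not symmetric in the middle. That is enough: if the pairing were perfect, then $\dim \RH^{2i} = \dim \RH^{2(s-i)}$ for all $i$, where $s = 2g-3+n = 9$ in both cases. So the goal is an upper bound on one middle group and a lower bound on the complementary one, as in Table \ref{Pixtontheoremtable}:
\[
\dim \RH^{8}(\M_6^{\ct})\leq 71 <72\leq \dim \RH^{10}(\M_6^{\ct}),
\qquad
\dim \RH^{8}(\M_{5,2}^{\ct})\leq 636<637\leq \dim\RH^{10}(\M_{5,2}^{\ct}).
\]
With these inequalities in hand, nondegeneracy fails on the side of $\RH^{10}$, which therefore contains an invisible class.

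For the upper bounds I would follow the method of the proof of Theorem \ref{isGorenstein}. Using \texttt{admcycles}, I would generate the $3$-spin relations $\FZ^4(\M_{g,n}^{\ct})$ as the restriction of $\FZ^4(\Mb_{g,n})$. These lie in $\mathsf{I}_{\H}$ by \cite{PandharipandePixtonZvonkine}. I would reduce the relation matrix modulo a prime $p$ not dividing its denominators. Then $\dim \S^4(\M^{\ct}_{g,n})-\rank M^{\FZ,4}_{\mathbb F_p}$ bounds $\dim \RH^8$ from above. To keep the matrices manageable, I would first discard decorated strata whose decoration exceeds the local dimension at some vertex.

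For the lower bound on $\RH^{10}$, the pairing into the one-dimensional socle only detects $71$ (resp.\ $636$) dimensions, so it cannot certify $72$. I would instead evaluate against a larger test space that is still computable. Concretely, I would lift classes to $\Mb_{g,n}$ and intersect with $\lambda_g$ times tautological classes of complementary degree. Equivalently, I would pull back along the gluing maps $\varphi$ to boundary strata and pair there: pulling back via the elliptic-tail map to $\M_{g-1,n+1}^{\ct}\times\M^{\ct}_{1,1}$ or to products of smaller spaces, and then pairing in those Gorenstein rings (Theorem \ref{isGorenstein}). I would choose $72$ (resp.\ $637$) explicit decorated strata in $\S^{5}$ and compute the matrix of these numerical invariants. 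If it has full rank, the classes are independent in cohomology.

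The main obstacle is this lower bound. It requires finding enough test functionals, since the pairing alone misses exactly one dimension. It also requires showing that the detected extra class is nonzero, for instance that its pullback under some $\varphi^*$ to $\M_{5,1}^{\ct}$, $\M_{4,3}^{\ct}$ or a product with $\M_{1,1}^{\ct}$ is nonzero there, where Gorenstein-ness makes nonvanishing checkable. I would first try the elliptic-tail pullback $\varphi^*$ combined with pairings on $\M_{5,1}^{\ct}$ (resp.\ $\M_{4,3}^{\ct}$). The size of the Gaussian elimination for $\M_{5,2}^{\ct}$ is a secondary computational difficulty.
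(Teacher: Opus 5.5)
\paragraph{The lower bound on $\RH^{10}$ has a genuine gap.} Your upper bounds on $\RH^8$ are fine and match the paper ($\dim\R^4_{\FZ}=71$, resp.\ $636$). The lower bound on $\RH^{10}$, which you rightly call the crux, cannot come from the functionals you propose.

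\begin{itemize}
\item Lifting to $\Mb_{g,n}$ and integrating against $\lambda_g$ times tautological classes of complementary degree is literally the pairing \eqref{ctpairing}. It is not a larger test space.
\item Pulling back along a gluing map $\xi\colon\prod_v\M^{\ct}_{g(v),n(v)}\to\M^{\ct}_{g,n}$ and pairing there is no better. Since $\xi^*\lambda_g=\prod_v\lambda_{g(v)}$, push--pull gives $\langle\xi^*\alpha,\tilde v\rangle=\langle\alpha,\xi_*\tilde v\rangle$. So an invisible $\alpha$ pairs to zero with every class on every stratum.
\end{itemize}

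Moreover, every vertex of a proper compact type stratum of $\M^{\ct}_6$ or $\M^{\ct}_{5,2}$ has $2g(v)+n(v)\le 11$ and is not of type $(2,7)$. These rings are therefore Gorenstein by Theorem \ref{isGorenstein}, and hence $\xi^*\alpha=0$ for every such $\xi$. Your proposed certificate, nonvanishing of the pullback of the extra class to $\M^{\ct}_{5,1}$ or $\M^{\ct}_{4,3}$, is guaranteed to fail. None of the functionals you list can detect the $72$nd (resp.\ $637$th) dimension.

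\paragraph{The missing idea.} You need to certify the dimension on the compact space $\Mb_{g,n}$ and then descend to compact type.

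First, the paper shows the $3$-spin relations are complete for $\RH^{10}(\Mb_6)$ and $\RH^{10}(\Mb_{5,2})$. The mod-$p$ upper bounds $988$ and $7147$ are matched by the ranks of the pairings $\RH^{10}(\Mb_6)\times\RH^{20}(\Mb_6)\to\qq$ and $\RH^{10}(\Mb_{5,2})\times\RH^{18}(\Mb_{5,2})\to\qq$.

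Second, it applies Lemma \ref{bartoct}. The kernel of $\H^{10}(\Mb_{g,n})\to W_{10}\H^{10}(\M^{\ct}_{g,n})$ is the image of $\H^{8}(\Mb_{g-1,n+2})$. By \cite{CL-CKgP}, $\H^8(\Mb_{5,2})$ and $\H^8(\Mb_{4,4})$ are tautological. So on tautological classes this kernel is exactly the image of $\S^4(\Mb_{g-1,n+2})$. This yields $\RH^{10}(\M^{\ct}_{g,n})=\S^5(\M^{\ct}_{g,n})/\FZ^5(\M^{\ct}_{g,n})$, of dimension $72$ (resp.\ $637$).

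The input that controls the kernel of restriction to compact type is the vanishing of non-tautological cohomology in $\H^8(\Mb_{g-1,n+2})$. Your plan has no counterpart to it.
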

\begin{proof}
Using \texttt{admcycles}, we calculate
\[
\dim \R_{\FZ}^{4}(\M_{6}^{\ct})=71, \quad \dim \R_{\FZ}^{5}(\M_{6}^{\ct})=72, \quad \dim \R_{\FZ}^{4}(\M_{5,2}^{\ct})=636, \quad \dim \R_{\FZ}^{5}(\M_{5,2}^{\ct})=637.
\]
It thus suffices to show the $3$-spin relations are complete for $\RH^{10}(\M_{6}^{\ct})$ and $\RH^{10}(\M_{5,2}^{\ct})$, which we do using Lemma \ref{bartoct}.

As in Section \ref{yesgorensteinsection}, we calculate upper bounds \[
\dim \R\H^{10}(\Mb_{6})\leq 988\quad \text{ and }\quad \dim \R\H^{10}(\Mb_{5,2})\leq 7147\]
by computing the rank of the matrices of $3$-spin relations modulo a prime $p$.
Next, we compute lower bounds for the ranks of the pairings
\[
\R\H^{10}(\Mb_{6})\times \R\H^{20}(\Mb_6)\rightarrow \qq
\]
and
\[
\R\H^{10}(\Mb_{5,2})\times \R\H^{18}(\Mb_{5,2})\rightarrow \qq.
\]
The ranks are $988$ and $7147$, respectively, and thus the $3$-spin relations are complete for $\RH^{10}(\Mb_{6})$ and $\RH^{10}(\Mb_{5,2})$. Moreover, by \cite[Theorem 1.4]{CL-CKgP}, $\R\H^{8}(\Mb_{5,2})=\H^{8}(\Mb_{5,2})$ and $\R\H^{8}(\Mb_{4,4})=\H^{8}(\Mb_{4,4})$. Applying Lemma \ref{bartoct}, we see that the $3$-spin relations are complete for $\RH^{10}(\M_{6}^{\ct})$ and $\RH^{10}(\M_{5,2}^{\ct})$. %Thus, by Lemma \ref{bartoct}, to prove Theorem \ref{Gortheorem} when $g=5,6$, it suffices to show that the $3$-spin relations are complete for $\R\H^{10}(\Mb_6)$ and $\R\H^{10}(\Mb_{5,2})$. Moreover, by Proposition \ref{awayfrommiddle}, this will prove Theorem \ref{Pixtontheorem} for $(g,n)=(6,0)$ and $(5,2)$.
\end{proof}
\subsection{Genus 3}\label{3}
Here, we study the case $(g,n)=(3,6)$.
\begin{prop}\label{36notGor}
    The tautological ring $\RH^*(\M_{3,6}^{\ct})$ is not Gorenstein.
\end{prop}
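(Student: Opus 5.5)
To prove Proposition \ref{36notGor}, I would first try to reuse the strategy of Proposition \ref{5and6}, but I expect it to break down. That strategy needs completeness of the $3$-spin relations on $\Mb_{3,6}$ in the relevant degree, together with $\H^{2k-2}(\Mb_{2,8})=\RH^{2k-2}(\Mb_{2,8})$ to apply Lemma \ref{bartoct}. The socle degree is $2g-3+n=9$, so the middle degrees are $4$ and $5$. For $\Mb_{2,8}$ in degree $4$, however, non-tautological odd cohomology and Petersen's non-Gorenstein phenomena are already around. So the direct computation of both $\R_{\FZ}$ ranks and their comparison with the true ranks would probably be too large as well.

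Instead, I would produce an explicit invisible class, using the structure of Section \ref{prelim}. The plan is to start from Petersen's result \cite{Petersen} that $\RH^*(\M_{2,8}^{\ct})$ is not Gorenstein. Petersen's argument gives an explicit invisible class: it comes from the failure of the $\ss_8$-equivariant structure, namely from a non-tautological weight-$13$-type (cusp form) contribution to $\H^*(\M_{2,8}^{\ct})$ that forces the tautological Betti numbers or pairings to be asymmetric. I would transport this class to $(3,6)$ via a gluing map. Lemma \ref{gluelem} and Lemma \ref{moregenus} only increase $2g+n$, so the transport has to lose markings. The natural candidate is to pair two markings of $\M_{2,8}$-type data into a genus-one component: take the gluing $\varphi:\M_{2,7}^{\ct}\times\M_{1,1}^{\ct}\to\M_{3,6}^{\ct}$, or more generally $\M_{2,k}^{\ct}\times \M_{1,m}^{\ct}$. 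Here the invisible input must live on $\M_{2,7}^{\ct}$. This ties in with the observation after Theorem \ref{isGorenstein} that the pairing $\R^4(\M_{2,7}^{\ct})\times\R^4(\M_{2,7}^{\ct})\to\qq$ is the one pairing not known to be perfect. I would try to show it has a kernel, giving an invisible $\alpha\in\RH^8(\M_{2,7}^{\ct})$, and then apply Lemma \ref{moregenus} (i.e.\ Lemma \ref{gluelem} with $(g',n')=(1,1)$, $\gamma=1$) to get invisibility on $\M_{3,7}^{\ct}$. This still overshoots by one marking.

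The cleaner route I would actually commit to is computational, mirroring the $(g,n)=(2,7)$ remark. Using \texttt{admcycles}, I would find a class $\alpha\in\S^4(\M_{3,6}^{\ct})$ (or in degree $5$) that pairs to zero with all of $\S^{5}(\M_{3,6}^{\ct})$, restricting to $\ss_2\times\ss_2\times\ss_2$- or $\ss_6$-invariant parts to make the matrices manageable. It then remains to certify $\alpha\neq 0$ in $\RH^8(\M_{3,6}^{\ct})$. For that I would find a geometric test: a map $f$ with $f^*\alpha\neq0$ in a space where the tautological ring is known to be Gorenstein or to equal the full cohomology. Candidates are the boundary restrictions to $\M_{2,7}^{\ct}\times\M_{1,1}^{\ct}$ and $\M_{1,k}^{\ct}\times\M_{2,8-k}^{\ct}$, where Theorem \ref{isGorenstein} and Tavakol's genus-one result let pairings detect nonvanishing. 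I would compute $\varphi^*\alpha$ via excess intersection as in \eqref{pp} and show that it pairs nontrivially on a product of Gorenstein factors.

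The main obstacle is this certification of $\alpha\neq 0$. Unlike genus $5,6$, completeness of $3$-spin relations on $\Mb_{3,6}$ in degree $4$ combined with $\H^{6}(\Mb_{2,8})$ being tautological is not available, and the pairing cannot see $\alpha$ by definition. The first thing I would try is to find a boundary pullback landing in Gorenstein pieces. Failing that, I would attempt to verify $\H^6(\Mb_{2,8})=\RH^6(\Mb_{2,8})$ together with a full $3$-spin rank computation on $\Mb_{3,6}$ in degree $4$, and then use Lemma \ref{bartoct}.
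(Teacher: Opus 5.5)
There is a genuine gap, and it sits at the step you flag as the main obstacle. Certifying that an invisible $\alpha$ is nonzero by pulling back to boundary strata and pairing there cannot work in principle. Suppose $\alpha\in\RH^{2i}(\M_{3,6}^{\ct})$ is invisible and $\varphi$ is any gluing map into $\M_{3,6}^{\ct}$. For every tautological $\beta$ on the source of complementary degree, the projection formula gives $\varphi_*(\varphi^*\alpha\cdot\beta)=\alpha\cdot\varphi_*\beta=0$. Since $\varphi_*$ is an isomorphism on socles (as in \eqref{gluingmap}), this forces $\varphi^*\alpha\cdot\beta=0$; this is just \eqref{pushpull}. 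So $\varphi^*\alpha$ is either zero or itself invisible, and it never pairs nontrivially with any tautological class on the source. In fact, when $\alpha$ has degree $5$, every boundary divisor of $\M_{3,6}^{\ct}$ is Gorenstein in the relevant degrees (Theorem \ref{isGorenstein}, plus the $(2,7)$ computations away from the middle degree), so $\varphi^*\alpha=0$ there. The same applies to $\pi_*$ or any other test built from tautological operations: it is the compact type pairing in disguise. Nonvanishing of an invisible class can only come from a lower bound on $\dim\RH^*(\M_{3,6}^{\ct})$ that uses input from outside the tautological ring. Searching in $\S^4$ is also misdirected: the expected asymmetry is that the degree-$5$ group is the larger one, and an element of $\S^4$ orthogonal to $\S^5$ is expected to be a relation.

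That outside input is exactly what you set aside. Your reason for abandoning Lemma \ref{bartoct} is incorrect: $\H^8(\Mb_{2,8})=\RH^8(\Mb_{2,8})$ by \cite[Theorem 1.4]{CL-CKgP}, which is what the lemma needs with $k=5$. Petersen's phenomenon concerns $\M_{2,8}^{\ct}$, not this group. Completeness of the $3$-spin relations in $\RH^{10}(\Mb_{3,6})$ is then reachable without the infeasible pairing computation. The reason is that $\H^*(\Mb_{3,6})$ is entirely tautological \cite{CL-CKgP} and its $\ss_6$-character is known \cite{BergstromFaber}. With $G=\ss_2\times\ss_2\times\ss_2$, Frobenius reciprocity gives $\dim\RH^{10}(\Mb_{3,6})^G=80863$, which matches the mod $p$ upper bound on $\dim\R^5_{\FZ}(\Mb_{3,6})^G$. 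Lemma \ref{bartoct} then yields $\dim\RH^{10}(\M_{3,6}^{\ct})^G=\dim\R^5_{\FZ}(\M_{3,6}^{\ct})^G=13160$, whereas $\dim\RH^8(\M_{3,6}^{\ct})^G\le\dim\R^4_{\FZ}(\M_{3,6}^{\ct})^G\le 13159$. Since the pairing is $\ss_6$-equivariant, perfection would force these invariant dimensions to agree, so no explicit invisible class is ever needed. Your final fallback also aims at the wrong side. Completeness in degree $4$ (via $\H^6(\Mb_{2,8})$) only pins down the smaller group, and pairings can never show that $\dim\RH^{10}(\M_{3,6}^{\ct})$ exceeds $\dim\RH^8(\M_{3,6}^{\ct})$. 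You need the exact dimension in degree $5$.
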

In principle, we could follow the same approach as in the proof of Proposition \ref{5and6}. Unfortunately, because of the computational complexity of verifying that the $3$-spin relations are complete for $\R\H^{10}(\Mb_{3,6})$ using the pairing method, we need a different approach to study $\R\H^{10}(\Mb_{3,6})$. 

The cohomology groups $\H^{k}(\Mb_{g,n})$ are $\ss_n$ representations. For a partition $\lambda$ of $n$, we denote by $s_\lambda$ the corresponding $\ss_n$ representation. The $\ss_6$ representation $\H^*(\Mb_{3,6})$ was calculated by Bergstr\"om and Faber \cite{BergstromFaber} and recorded in \cite{BergstromData}. Additionally, by \cite[Theorem 1.4]{CL-CKgP}, $\H^*(\Mb_{3,6})=\R\H^*(\Mb_{3,6})$. From these results, we have the following proposition.
\begin{prop}\label{Mb36}
As an $\ss_6$ representation, $\RH^{10}(\Mb_{3,6})=\H^{10}(\Mb_{3,6})$ is
\begin{multline}
44 s_{2,1^4} +1086 s_{3,1^3}+767 s_{2^2,1^2}+5851s_{4,1^2}+6034s_{3,2,1}\\+1144s_{2^3}+10327s_{5,1}+10389s_{4,2}+4266s_{3,3}+5713s_6.
\end{multline}
\end{prop}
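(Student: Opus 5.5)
The statement has two ingredients, and the plan treats them separately. First, the equality $\RH^{10}(\Mb_{3,6})=\H^{10}(\Mb_{3,6})$ is quoted directly from \cite[Theorem 1.4]{CL-CKgP}, which shows that all cohomology of $\Mb_{3,6}$ is tautological. Second, the $\ss_6$-character of $\H^{10}(\Mb_{3,6})$ is read off from the computations of Bergstr\"om and Faber \cite{BergstromFaber}, as recorded in \cite{BergstromData}. There is nothing to construct. The work is in correctly extracting and packaging the data.

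The steps run as follows. Bergstr\"om--Faber determine the $\ss_n$-equivariant Hodge (or $\ell$-adic) Euler characteristic of $\M_{3,n}$ for $n\le 7$, via point counts over finite fields. From these one assembles the equivariant Euler characteristic of $\Mb_{3,6}$ by summing over stable graphs. Following Getzler--Kapranov, this is the modular operad plethysm of the Euler characteristics of the open strata $\M_{g',n'}$ with $g'\le 3$. Since $\H^*(\Mb_{3,6})$ is all tautological, it is pure of Tate type and concentrated in even degrees. The Euler characteristic, graded by weight, therefore determines each $\H^{2k}$ as an $\ss_6$-representation. Concretely, I would take the coefficient of $\mathbb{L}^5$ in the weight-graded equivariant Euler characteristic and expand it in Schur functions $s_\lambda$ with $\lambda\vdash 6$, which gives the displayed multiplicities.

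For sanity checks, I would verify two things. First, the total dimension $\sum_\lambda m_\lambda \dim s_\lambda$ should agree with the known Betti number $h^{10}(\Mb_{3,6})$. Second, the multiplicity of the trivial representation $s_6$ should equal $\dim \H^{10}(\Mb_{3,6}/\ss_6)$; this is also the number of $\ss_6$-invariant tautological classes in degree $5$, which one can compute independently in \texttt{admcycles}.

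The main obstacle I expect is bookkeeping rather than any new idea: performing the graph sum (plethysm) correctly and extracting the weight-$10$ piece. I would rely on the recorded tables in \cite{BergstromData} rather than recompute them, and cross-check the trivial isotypic component computationally.
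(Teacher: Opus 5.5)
Your proposal is correct and matches the paper. The paper likewise proves the statement purely by citing Bergstr\"om--Faber's computation of the $\ss_6$-representation $\H^*(\Mb_{3,6})$ (recorded in \cite{BergstromData}), together with \cite[Theorem 1.4]{CL-CKgP} for the equality $\H^*(\Mb_{3,6})=\RH^*(\Mb_{3,6})$. Your sketch of how the cited data arises (a graph sum of open-stratum Euler characteristics, then splitting by weight) is background only. The degree-by-degree splitting already follows from purity of the smooth proper $\Mb_{3,6}$, so the tautological input is needed only for the identification $\RH^{10}=\H^{10}$.
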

\begin{cor}\label{36invariants}
We have $\dim \R\H^{10}(\Mb_{3,6})^{\ss_2\times \ss_2 \times \ss_2}=80863$.
\end{cor}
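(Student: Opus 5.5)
The plan is to compute the dimension of the invariants of $\ss_2\times\ss_2\times\ss_2$ directly from the $\ss_6$-decomposition in Proposition \ref{Mb36}, using Frobenius reciprocity to reduce it to counting semistandard tableaux. I take $\ss_2\times\ss_2\times\ss_2\subset\ss_6$ to be the Young subgroup $\ss_{(2,2,2)}$ permuting the marking pairs $\{1,2\}$, $\{3,4\}$, $\{5,6\}$; any conjugate subgroup gives the same count.

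For a Young subgroup $\ss_\mu$, Frobenius reciprocity gives
\[
\dim (s_\lambda)^{\ss_\mu}=\langle \mathrm{Ind}_{\ss_\mu}^{\ss_6}\mathbf{1},\, s_\lambda\rangle=\langle h_\mu, s_\lambda\rangle=K_{\lambda,\mu},
\]
where $K_{\lambda,\mu}$ is the Kostka number. So it suffices to compute $K_{\lambda,(2,2,2)}$ for each $\lambda$ occurring in Proposition \ref{Mb36} and form $\sum_\lambda m_\lambda K_{\lambda,(2,2,2)}$.

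The Kostka numbers are counts of semistandard Young tableaux of shape $\lambda$ with content $(2,2,2)$.
\begin{itemize}
\item Any $\lambda$ with more than three rows gives $0$, since a column can contain at most three distinct values. This kills $s_{2,1^4}$, $s_{3,1^3}$ and $s_{2^2,1^2}$.
\item A direct enumeration of the remaining shapes gives
\[
K_{6}=1,\quad K_{5,1}=2,\quad K_{4,2}=3,\quad K_{3,3}=1,\quad K_{4,1^2}=1,\quad K_{3,2,1}=2,\quad K_{2^3}=1.
\]
For example, for $(4,2)$ the second row can be $22$, $23$ or $33$. For $(3,3)$ only the tableau with rows $112$ and $233$ is semistandard.
\end{itemize}

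Summing with the multiplicities from Proposition \ref{Mb36} gives
\[
5713+2\cdot 10327+3\cdot 10389+4266+5851+2\cdot 6034+1144=80863,
\]
which is the claim. The only step where an error could slip in is the tableau enumeration; I would double-check it against the hook content formula or against the known expansion of $h_2^3$ in Schur functions. Otherwise the argument is routine.
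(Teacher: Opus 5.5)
Your proposal is correct and follows essentially the same route as the paper: Frobenius reciprocity against $\mathrm{Ind}^{\ss_6}_{\ss_2\times\ss_2\times\ss_2}\mathbf{1}$, then summing multiplicities from Proposition \ref{Mb36}. Your Kostka numbers $K_{\lambda,(2,2,2)}$ are exactly the coefficients in the paper's stated decomposition of the induced representation ($h_2^3$). The dimensions also check out ($1+10+27+10+5+32+5=90=6!/8$), as does your final sum.
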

\begin{proof}
    To compute the $\ss_2\times \ss_2\times \ss_2$ invariants, we need to find the number of copies of the trivial representation in the restricted representation
    \[
    \mathrm{Res}^{\ss_6}_{\ss_2\times \ss_2 \times \ss_2} \R\H^{10}(\Mb_{3,6}).
    \]
    By Frobenius reciprocity, this number is the same as the inner product of $\R\H^{10}(\Mb_{3,6})$ with the induced representation
    \[
    \mathrm{Ind}^{\ss_6}_{\ss_2\times \ss_2 \times \ss_2} (\mathbf{1}\boxtimes \mathbf{1}\boxtimes \mathbf{1})=    s_{2^3} + 2s_{3,2,1} + s_{3^2} + s_{4,1^2} + 3s_{4, 2} + 2s_{5, 1} + s_{6}.
    \]
    Thus, by Proposition \ref{Mb36}, we have
    \begin{equation}
        \begin{split}
            \dim \R\H^{10}(\Mb_{3,6})^{\ss_2\times \ss_2 \times \ss_2}&=1144+2(6034)+4266+ 5851 +3(10389) + 2(10327) + 5713 \\
            &=80863. \qedhere
        \end{split}
    \end{equation}
\end{proof}

\begin{proof}[Proof of Proposition \ref{36notGor}]
    Because the intersection pairing 
    \[
    \R\H^8(\M_{3,6}^{\ct})\times \R\H^{10}(\M_{3,6}^{\ct})\rightarrow \qq
    \]
    is $\ss_6$-equivariant, it suffices to show that
    \[
    \dim \R\H^8(\M_{3,6}^{\ct})^{\ss_2\times \ss_2 \times \ss_2}\neq \dim\R\H^{10}(\M_{3,6}^{\ct})^{\ss_2\times \ss_2 \times \ss_2}.
    \]
    Using \texttt{admcycles}, we calculate\footnote{These calculations are done by reducing the matrix of relations modulo $p$, which is the reason we obtain only an inequality.} 
    \[\dim \R_{\FZ}^4(\M_{3,6}^{\ct})^{\ss_2\times\ss_2\times\ss_2}\leq 13159 \qquad\text{and} \qquad \R_{\FZ}^5(\Mb_{3,6})^{\ss_2\times\ss_2\times \ss_2} \leq 80863.\] 
Hence, we have
\[80863 = \dim \R\H^{10}(\Mb_{3,6})^{\ss_2 \times \ss_2 \times \ss_2} \leq \dim \R^5_{\FZ}(\Mb_{3,6})^{\ss_2 \times \ss_2 \times \ss_2} \leq 80863,\]
where the first equality is Corollary  \ref{36invariants}. It follows that the above inequalities are all equalities and that the 
$3$-spin relations are complete for $\R\H^{10}(\Mb_{3,6})^{\ss_2\times \ss_2 \times \ss_2}$. By \cite[Theorem 1.4]{CL-CKgP}, we have $\H^{8}(\Mb_{2,8}) = \R\H^8(\Mb_{2,8})$, so Lemma \ref{bartoct} shows that the $3$-spin relations are complete for $\R\H^{10}(\M_{3,6}^{\ct})^{{\ss_2\times \ss_2 \times \ss_2}}$.
    
Meanwhile, computing the rank of the matrix of relations (this time over $\qq$ to obtain the exact rank), we have
\[
\dim \R^5_{\FZ}(\M_{3,6}^{\ct})^{\ss_2\times\ss_2\times\ss_2} =  13160.
\]
Thus, we conclude
 \begin{align*} \dim \R\H^8(\M_{3,6}^{\ct})^{\ss_2 \times \ss_2 \times \ss_2} &\leq 
 \dim \R_{\FZ}^4(\M_{3,6}^{\ct})^{\ss_2\times\ss_2\times\ss_2} \leq 13159\\
 &< 13160 = \dim \R^5_{\FZ}(\M_{3,6}^{\ct})^{\ss_2\times\ss_2\times\ss_2} = \dim \R\H^{10}(\M_{3,6}^{\ct})^{\ss_2 \times \ss_2 \times \ss_2},
 \end{align*}
 and so $\RH^*(\M_{3,6}^{\ct})$ is not Gorenstein.
 %Using \texttt{admcycles}, we calculate that
    %\[
    %\dim \RH^{8}(\M^{\ct}_{3,6})\leq\dim (\S^{4}(\M^{\ct}_{3,6})/\mathsf{FZ}^{4}(\M^{\ct}_{3,6}))^{\ss_2\times \ss_2 \times \ss_2}<?<\dim \R\H^{10}(\M_{3,6}^{\ct})^{{\ss_2\times \ss_2 \times \ss_2}}=??.
    %\] Thus, $\R\H^*(\M_{3,6}^{\ct})$ is not Gorenstein. \samc{Need to calculate these question mark numbers. }
    %\samc{Calculated $\dim \RH^{8}(\M_{3,6}^{\ct})^{\ss_2}\leq 35292$}
\end{proof}
\begin{rem}
    The only reason for taking $\ss_2\times \ss_2\times\ss_2$ invariants in the above proof is computational. We are unable to compute $\dim \R_{\FZ}^5(\Mb_{3,6})$ using \texttt{admcycles}.
\end{rem}

\subsection{Genus 4}
Here, we study the case $(g,n)=(4,4)$.
\begin{prop}\label{genus4notgor}
    The tautological ring $\RH^*(\M^{\ct}_{4,4})$ is not Gorenstein.
\end{prop}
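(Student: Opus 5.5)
I will follow the template of Propositions \ref{5and6} and \ref{36notGor}. The socle degree of $\M_{4,4}^{\ct}$ is $2g-3+n=9$, so the relevant pairing is $\R\H^8(\M_{4,4}^{\ct})\times\R\H^{10}(\M_{4,4}^{\ct})\to\qq$, i.e.\ codimensions $4$ and $5$. It suffices to show $\dim \R\H^8(\M_{4,4}^{\ct})^{G}<\dim \R\H^{10}(\M_{4,4}^{\ct})^{G}$ for some subgroup $G\subset\ss_4$ (possibly $G$ trivial), since the pairing is $\ss_4$-equivariant. The upper bound on the left side is cheap: $\dim\R\H^8(\M_{4,4}^{\ct})^G\le\dim\R^4_{\FZ}(\M_{4,4}^{\ct})^G$, which I compute with \texttt{admcycles} by reducing the $3$-spin relation matrix modulo a prime $p$. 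The exact value $\dim\R^5_{\FZ}(\M_{4,4}^{\ct})^G$ I compute over $\qq$; I expect it to exceed the codimension $4$ bound by exactly one, as in the other $2g+n=12$ cases.

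The real work is showing that the $3$-spin relations are complete for $\R\H^{10}(\M_{4,4}^{\ct})^G$, so that the computed $\R^5_{\FZ}$ dimension is the true tautological dimension. For this I use Lemma \ref{bartoct}. Its hypothesis $\H^8(\Mb_{3,6})=\R\H^8(\Mb_{3,6})$ holds by \cite[Theorem 1.4]{CL-CKgP}, as already used above. It then remains to show completeness of the $3$-spin relations in $\R\H^{10}(\Mb_{4,4})$, or in its $G$-invariant part.

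I would first try the method of Proposition \ref{36notGor}, which needs the full cohomology $\H^{10}(\Mb_{4,4})$. Assuming \cite{CL-CKgP} gives $\H^{10}(\Mb_{4,4})=\R\H^{10}(\Mb_{4,4})$, I would determine $\H^{10}(\Mb_{4,4})$ as an $\ss_4$-representation. In genus $4$ this is not in the Bergstr\"om--Faber tables, so I would compute it from Faber's point counts in genus $4$ (acknowledged above), together with purity and the fact that all cohomology is tautological (so the Euler characteristic determines the Betti numbers). Taking $G$-invariants via Frobenius reciprocity, as in Corollary \ref{36invariants}, gives an exact value $N=\dim\R\H^{10}(\Mb_{4,4})^G$. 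Computing $\dim\R^5_{\FZ}(\Mb_{4,4})^G\le N$ modulo $p$ then forces equality, so the relations are complete. If the rank computation turns out to be feasible, the alternative is the pairing method of Proposition \ref{5and6}: match the mod-$p$ upper bound against a lower bound for the rank of $\R\H^{10}(\Mb_{4,4})\times\R\H^{12}(\Mb_{4,4})\to\qq$.

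The main obstacle is this completeness step on $\Mb_{4,4}$. The space of decorated strata in codimension $5$ is large, so both the mod-$p$ rank of the $3$-spin matrix and the pairing rank are computationally heavy. On the other hand, the point-count input must be solid enough to pin down $H^{10}$ exactly. I expect to need symmetrization under a suitable $G$ (e.g.\ $\ss_2\times\ss_2$ or all of $\ss_4$) to make the linear algebra tractable, choosing $G$ so that the invariant dimensions still differ by one.
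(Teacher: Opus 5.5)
Your overall architecture is the paper's. You show the $3$-spin relations are complete in $\RH^{10}(\Mb_{4,4})$, transfer this to $\M_{4,4}^{\ct}$ by Lemma~\ref{bartoct} (with hypothesis $\H^8(\Mb_{3,6})=\RH^8(\Mb_{3,6})$ from \cite{CL-CKgP}), and then compare $\dim\R^4_{\FZ}(\M_{4,4}^{\ct})\le 6222<6224=\dim\R^5_{\FZ}(\M_{4,4}^{\ct})$. In the paper this is done with $G$ trivial, and the difference is at least two rather than one.

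The gap is the step that pins down $\dim\H^{10}(\Mb_{4,4})$. Since $\H^*(\Mb_{4,4})$ is tautological and pure Tate, $\#\Mb_{4,4}(\mathbb{F}_q)=\sum_{j=0}^{13}b_{2j}q^j$ with $b_{2j}=\dim\RH^{2j}(\Mb_{4,4})$. After Poincar\'e duality and $b_0=1$, six unknowns $b_2,\dots,b_{12}$ remain. The available inputs are $\chi(\Mb_{4,4})=327584$ (Lemma~\ref{Euler}) and Faber's two counts over $\mathbb{F}_2$ and $\mathbb{F}_3$ (Theorem~\ref{pointcounts}), which give only three linear equations. Your parenthetical claim that the Euler characteristic determines the Betti numbers is wrong. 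Vanishing of odd cohomology only makes $\chi$ the sum of the Betti numbers, which is a single equation, so the system is underdetermined. Moreover, these are total counts, not $\ss_4$-equivariant ones, so they cannot produce $\H^{10}(\Mb_{4,4})$ as an $\ss_4$-representation. Only the $G$-trivial version of your plan can work.

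The missing idea is to obtain the other three equations from the pairing method in low codimension, where it is cheap. Your fallback of using it in codimension $5$ does not help: that is out of computational reach, which is the whole reason point counts are needed. For $i=1,2,3$, the mod-$p$ $3$-spin upper bounds $41$, $589$, $4467$ for $\dim\R^i_{\FZ}(\Mb_{4,4})$ are matched by the ranks of the pairings $\RH^{2i}(\Mb_{4,4})\times\RH^{26-2i}(\Mb_{4,4})\to\qq$ (Lemma~\ref{lowdegreeterms}). This leaves $b_8,b_{10},b_{12}$, and solving the three linear equations gives $b_{10}=52761$. The mod-$p$ bound $\dim\R^5_{\FZ}(\Mb_{4,4})\le 52761$ then forces completeness, and the rest of your argument goes through unchanged.
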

 We follow the approach in Section \ref{3}, but we need a few new results about the cohomology of $\Mb_{4,4}$. Because all cohomology of $\Mb_{4,4}$ is tautological \cite[Theorem 1.4]{CL-CKgP}, we can write its Poincar\'e polynomial as
\begin{equation}\label{ppoly}
\sum_{i=0}^{26}(-1)^i \dim \H^i(\Mb_{4,4})t^i = \sum_{j=0}^{13}\dim \RH^{2j}(\Mb_{4,4})t^{2j}.
\end{equation}
By Poincar\'e duality, the polynomial \eqref{ppoly} is determined by $\dim \RH^{2j}(\Mb_{4,4})$ for $0\leq j\leq 6$. The Euler characteristic $\chi(\Mb_{4,4})$ determines one linear relation among the dimensions of the cohomology groups. It was calculated by Bini and Harer \cite[Table 2]{BiniHarer}.

\begin{lem}\label{Euler}
    We have $\chi(\Mb_{4,4})=327584$.
%\[
%\chi(\Mb_{4,4})%=43450s_4+56810s_{3,1}+26894s_{2^2}+19504s_{2,1^2}+1404s_{1^4}
%=327584\]
\end{lem}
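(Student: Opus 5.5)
The lemma is a numerical input, and I would obtain it from the known Euler characteristics of the open moduli spaces by summing over the boundary stratification of $\Mb_{4,4}$. The number itself appears in \cite[Table 2]{BiniHarer}, so the plan amounts to explaining how that entry is produced and how I would check it independently.

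\textbf{Step 1: stratify.} Write $\Mb_{4,4}$ as the disjoint union of its locally closed strata
\[
\M_\Gamma=\Big(\prod_{v\in V(\Gamma)}\M_{g(v),n(v)}\Big)\Big/\mathrm{Aut}(\Gamma),
\]
where $\Gamma$ runs over stable graphs of genus $4$ with $4$ legs. The topological Euler characteristic (compactly supported, which equals the ordinary one for complex varieties) is additive over such a stratification. Hence
\[
\chi(\Mb_{4,4})=\sum_\Gamma \chi(\M_\Gamma).
\]

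\textbf{Step 2: handle the automorphism quotients.} Since $\mathrm{Aut}(\Gamma)$ acts by permuting the factors and the half-edge markings, $\chi$ of the quotient is not simply $\chi$ of the product divided by $|\mathrm{Aut}(\Gamma)|$. The clean way to organize this is equivariantly.
\begin{itemize}
\item Use the $\ss_n$-equivariant Euler characteristics $e^{\ss_n}(\M_{g,n})$, as symmetric functions, for all $g\le 4$ and $2g-2+n\le 6$.
\item Apply the Getzler--Kapranov modular operad formula, which expresses $\sum e^{\ss_n}(\Mb_{g,n})$ as a plethystic exponential-type transform of $\sum e^{\ss_n}(\M_{g,n})$. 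This correctly accounts for every quotient by graph automorphisms.
\item Take the dimension of the resulting genus-$4$, $n=4$ component. This should give $327584$.
\end{itemize}
As a side check, the decomposition into irreducibles gives the $\ss_4$-character $43450s_4+56810s_{3,1}+\cdots$.

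\textbf{Step 3: obtain the inputs.} The inputs $e^{\ss_n}(\M_{g,n})$ in low genus are known. The non-equivariant values come from Bini--Harer, who start from the Harer--Zagier orbifold Euler characteristic and correct it by an inclusion--exclusion over loci of curves with automorphisms. The equivariant values are due to Gorsky.

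\textbf{Main obstacle.} The main obstacle is bookkeeping rather than any conceptual issue. There are a large number of stable graphs of type $(4,4)$, and the automorphism and plethysm contributions are error-prone by hand. I would therefore carry out Step 2 by computer, for instance by enumerating the graphs with \texttt{admcycles} and computing each $\chi(\M_\Gamma)$ via the orbit-counting (Lefschetz) formula. I would then compare the total with the entry in \cite[Table 2]{BiniHarer}. For the paper itself, it suffices to cite that table.
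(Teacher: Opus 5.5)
Your proposal is correct and takes the same approach as the paper: the lemma rests on citing \cite[Table 2]{BiniHarer}. Your sketch of how that entry arises, via stratification and the Getzler--Kapranov formula with Gorsky's equivariant inputs, is a sound optional check, but the range you state for the inputs is wrong: you need $e^{\ss_n}(\M_{g',n'})$ for $2g'-2+n'\le 10$, not $\le 6$, since vertex types such as $(4,4)$ itself and $(0,12)$ occur.
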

To obtain two more linear relations, we use point counting results of Faber \cite{Faberemail}. Because the cohomology of $\Mb_{4,4}$ is pure Tate \cite[Theorem 1.4]{CL-CKgP}, its point count over $\mathbb{F}_q$ is given by substituting $t^2=q$ in the expression \eqref{ppoly}:
\[
\# \Mb_{4,4}(\mathbb{F}_q)  = \sum_{j=0}^{13}\dim \RH^{2j}(\Mb_{4,4})q^{j}.
\]
Faber has computed the point counts when $q=2,3$ \cite{Faberemail}. 
\begin{thm}[Faber]\label{pointcounts}
We have
\[
\#\Mb_{4,4}(\mathbb{F}_2)=48162399 \quad \text{ and } \quad \#\Mb_{4,4}(\mathbb{F}_3)=1392120592
.
\]
\end{thm}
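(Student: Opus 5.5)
The statement is Faber's point count of $\Mb_{4,4}$ over $\mathbb{F}_2$ and $\mathbb{F}_3$. It is a computational input rather than a consequence of the earlier results, so the plan is to reproduce the count by a stratified enumeration. The main tool is the stratification of $\Mb_{4,4}$ by stable graphs. For each stable graph $\Gamma$ of genus $4$ with $4$ legs, the open stratum is $\left(\prod_{v} \M_{g(v),n(v)}\right)/\mathrm{Aut}(\Gamma)$. Since these strata are quotient stacks, the $\mathbb{F}_q$-point count of $\Mb_{4,4}$ is the sum over $\Gamma$ of the twisted counts. These are computed via the Grothendieck--Lefschetz trace formula for stacks, i.e.\ counting $\mathbb{F}_q$-isomorphism classes weighted by $1/|\mathrm{Aut}|$, with Frobenius-twisted forms indexed by conjugacy classes of $\mathrm{Aut}(\Gamma)$.

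First, for each vertex type I need the equivariant counts of $\M_{h,m}$ for $h\le 4$ and $m\le 4+2(\text{number of edges})$. These are needed as $\ss_m$-equivariant data, meaning counts with Frobenius acting by a given permutation of the markings, so that the twisted forms can be handled. In genus $0,1,2,3$ these are known from work of Getzler, Bergstr\"om, and Bergstr\"om--Faber, or can be obtained by direct enumeration: hyperelliptic and plane quartic models over $\mathbb{F}_q$, each counted with marked points weighted by automorphism groups. Genus $4$ needs its own enumeration. Non-hyperelliptic genus $4$ curves are canonically complete intersections of a quadric and a cubic in $\mathbb{P}^3$, and I would enumerate these over $\mathbb{F}_2$ and $\mathbb{F}_3$, together with the hyperelliptic locus. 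For each curve I record the numbers of $\mathbb{F}_{q^k}$-points for small $k$, which determine the equivariant counts of marked points up to the needed $m\le 4$.

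Second, I assemble the pieces by summing over stable graphs and over Frobenius twists of each graph. This bookkeeping is mechanical but large. As checks, I would use the fact that the cohomology is pure Tate (\cite[Theorem 1.4]{CL-CKgP}), so the resulting count must be a polynomial in $q$ with nonnegative coefficients. It must also be palindromic of degree $13$. Finally, it must be consistent with $\chi(\Mb_{4,4})$ from Lemma \ref{Euler} at $q=1$, which is most usefully checked after the polynomial is determined.

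I expect the main obstacle to be the genus $4$ enumeration with up to four marked points, in particular getting the automorphism weighting right in characteristic $2$ and $3$, where wild automorphisms occur. I would first verify the unmarked and one-pointed counts against known values of $\#\M_4(\mathbb{F}_q)$, and only then pass to more points.
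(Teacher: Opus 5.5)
There is no proof of this statement in the paper to compare with. The two numbers are Faber's unpublished computation, cited as a private communication; the acknowledgments thank him for his genus-$4$ point-counting data. They are used only as input in the proof of Proposition~\ref{genus4notgor}. There, writing $b_{2j}=\dim\RH^{2j}(\Mb_{4,4})$, the two counts together with $\chi(\Mb_{4,4})$ and $b_{2j}$ for $j\le 3$ determine $b_8,b_{10},b_{12}$.

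Your route sums Frobenius-twisted counts of the open strata $\bigl(\prod_v\M_{g(v),n(v)}\bigr)/\mathrm{Aut}(\Gamma)$. It takes genus $\le 3$ input from the literature and uses a census of genus-$4$ curves (canonical $(2,3)$ complete intersections in $\mathbb{P}^3$ plus the hyperelliptic locus). This is the standard way such counts are produced, and it is sound in outline. What it buys is independence from an unpublished computation. What it costs is a large enumeration and graph sum that you have not carried out, so as it stands it is a plan for a verification rather than one.

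A few corrections to the plan:

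\begin{enumerate}
\item Automorphisms are not the real obstacle in characteristic $2$ or $3$. The stacky count of the non-hyperelliptic part of $\M_{4,m}$ is the number of pairs (smooth canonical model $C$ over $\mathbb{F}_q$, ordered $m$-tuple of distinct points of $C(\mathbb{F}_q)$) divided by $|\mathrm{PGL}_4(\mathbb{F}_q)|$. So no automorphism group needs to be computed.

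\item Strata with a genus-$4$ vertex have trivial $\mathrm{Aut}(\Gamma)$, because the genus-$0$ trees attached to it are rigidified by the labelled legs. So for those strata only $\#C(\mathbb{F}_q)$ enters, not point counts over extensions.

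\item Conversely, twists that permute vertices need counts of the low-genus factors over $\mathbb{F}_{q^k}$, not just permutation-twisted counts over $\mathbb{F}_q$. An example is four elliptic tails on the rational vertex carrying the legs, permuted by $\ss_4$.

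\item Your checks do not work as stated. Values at $q=2,3$ alone cannot test polynomiality or palindromicity. Nor is $\chi(\Mb_{4,4})$ an independent check: it is one of the exactly three equations needed for the three unknowns $b_8,b_{10},b_{12}$.
\end{enumerate}

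The genuine a posteriori checks are these. The resulting solution $(b_8,b_{10},b_{12})=(19742,52761,86191)$ consists of nonnegative integers increasing toward the middle degree, as hard Lefschetz requires. And $b_{10}$ agrees with the independent upper bound $\dim\R^5_{\FZ}(\Mb_{4,4})\le 52761$.
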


Finally, using the pairing method in a computationally feasible range, we determine the dimensions of three of the cohomology groups.

\begin{lem}\label{lowdegreeterms}
    We have
    \[
    \dim \R\H^{2}(\Mb_{4,4})= 41, \quad \dim \R\H^{4}(\Mb_{4,4})= 589, \quad \dim \R\H^{6}(\Mb_{4,4})=4467,
    \]
    and the $3$-spin relations are complete in these cases.
\end{lem}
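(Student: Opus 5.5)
Lemma \ref{lowdegreeterms} is a computation, and I will handle it with the same two-sided squeeze used in Section \ref{yesgorensteinsection} and in Proposition \ref{5and6}, carried out in each degree $k\in\{1,2,3\}$.

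\textbf{Upper bound.} Using \texttt{admcycles}, I will generate the decorated strata basis of $\S^k(\Mb_{4,4})$ together with a spanning set of the $3$-spin relations $\FZ^k(\Mb_{4,4})$. I will then compute the rank of the relation matrix after reducing modulo a prime $p$ that does not divide its denominators. Since reduction mod $p$ can only lower rank, this gives
\[\dim \R^k_{\FZ}(\Mb_{4,4})\leq \dim \S^k(\Mb_{4,4})-\rank M^{\FZ,k}_{\mathbb{F}_p}.\]
Because $\FZ^k\subset \mathsf{I}_{\H}^k$ by Pandharipande--Pixton--Zvonkine, this quantity also bounds $\dim\RH^{2k}(\Mb_{4,4})$ from above. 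The target values are $41$, $589$ and $4467$.

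\textbf{Lower bound.} I will consider the intersection pairing
\[\RH^{2k}(\Mb_{4,4})\times\RH^{26-2k}(\Mb_{4,4})\to\qq,\]
which \texttt{admcycles} evaluates exactly on pairs of decorated strata. I will choose a candidate basis of $\S^k$ of the size of the upper bound (the strata surviving Gaussian elimination). Pairing it against a suitably chosen family of strata classes in degree $13-k$ gives an integral matrix, and its rank is a lower bound for $\dim\RH^{2k}(\Mb_{4,4})$. To certify this rank cheaply, I can compute it modulo a prime after clearing denominators, since rank mod $p$ is a lower bound for the rational rank.

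\textbf{Conclusion.} Once the two bounds coincide, the chain
\[\dim\RH^{2k}\leq\dim\R^k_{\FZ}\leq \text{bound}=\rank(\text{pairing})\leq\dim\RH^{2k}\]
forces equality throughout. This gives the stated dimensions, and it also shows that $\FZ^k(\Mb_{4,4})=\mathsf{I}^k_{\H}(\Mb_{4,4})$, i.e.\ the $3$-spin relations are complete. For $k=1$ the completeness part alternatively follows from Gubarevich, and the value $41$ can be cross-checked against Arbarello--Cornalba's description of divisors.

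\textbf{Main obstacle.} I expect the hardest step to be $k=3$. There the space of complementary classes in degree $10$ is very large, and computing pairings of codimension-$3$ strata with codimension-$10$ strata is expensive. My first attempt will be to pair only against a random subset of complementary strata, preferring those with many genus $0$ vertices, which are justified by Theorem \ref{theoremstar}, or against products of boundary strata with $\psi$ classes. I will enlarge that subset until the rank reaches the upper bound of $4467$. I will also use sparse modular elimination, for instance via \texttt{SpaSM}, for the relation matrix.
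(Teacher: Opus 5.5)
Your proposal is correct and is essentially the paper's proof. The paper likewise takes $\dim \R^i_{\FZ}(\Mb_{4,4})=41,589,4467$, computed with \texttt{admcycles}, as upper bounds, and matches them with the ranks of the pairings $\RH^{2i}(\Mb_{4,4})\times\RH^{26-2i}(\Mb_{4,4})\to\qq$ as in Section~\ref{yesgorensteinsection}, which forces both the dimensions and the completeness of the $3$-spin relations (your extra checks via Gubarevich and Arbarello--Cornalba, and your Theorem~$\star$ restriction of the complementary strata, are sound but optional).
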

\begin{proof}
    Using \texttt{admcycles}, we compute 
    \[
    \dim \R_{\FZ}^1(\Mb_{4,4})=41, \quad \dim \R_{\FZ}^{2}(\Mb_{4,4})=589, \quad \dim \R_{\FZ}^{3}(\Mb_{4,4})=4467, 
    \]
    %and
    %\[
    %\dim (\S^{3}(\Mb_{4,4})/\mathsf{FZ}^{3}(\Mb_{4,4}))=4467,
    %\]
    giving upper bounds on $\dim \RH^{2i}(\Mb_{4,4})$ for $i=1,2,3$.
    To obtain lower bounds, we compute the rank of the pairings 
    \[
    \RH^{2i}(\Mb_{4,4})\times \RH^{26-2i}(\Mb_{4,4})\rightarrow \RH^{26}(\Mb_{4,4})\cong \qq
    \]
    as in Section \ref{yesgorensteinsection}. In each case $i=1,2,3$, the rank of the pairing agrees with the previously calculated upper bounds, so the $3$-spin relations are complete, and the dimensions of each group are given by the upper bounds.
\end{proof}

\begin{proof}[Proof of Proposition \ref{genus4notgor}]
The Poincar\'e polynomial of $\Mb_{4,4}$
\[
\sum_{j=0}^{13}\dim \RH^{2j}(\Mb_{4,4})t^{2j}.
\]
is determined by the coefficients of $t^{2j}$ for $0\leq j\leq 6$ and Poincar\'e duality. Lemma \ref{lowdegreeterms} gives the first four coefficients. Lemma \ref{Euler} and Theorem \ref{pointcounts} determine the values of the Poincar\'e polynomial at $t=1,2,3$, giving three linear relations among the coefficients. Solving the system, we see that $\dim \R\H^{10}(\Mb_{4,4})=52761$. Using \texttt{admcycles}, we have
\[
\dim \R_{\FZ}^5(\Mb_{4,4})\leq 52761,
\] which shows the $3$-spin relations are complete for $\R\H^{10}(\Mb_{4,4})$. By Lemma \ref{bartoct}, the $3$-spin relations are complete for $\RH^{10}(\M_{4,4}^{\ct})$. 

%Because the intersection pairing 
 %   \[
 %   \R\H^8(\M_{4,4}^{\ct})\times \R\H^{10}(\M_{4,4}^{\ct})\rightarrow \qq
 %   \]
  %  is $\ss_4$-equivariant, 
%It suffices to show that
%    \[
%    \dim \R\H^8(\M_{4,4}^{\ct})\neq \dim\R\H^{10}(\M_{4,4}^{\ct}).
%    \]
Using \texttt{admcycles}, we calculate%\footnote{The calculation for $\dim \R^4_{\FZ}(\M_{4,4}^{\ct})^{\ss_2\times\ss_2}$ is done over a finite field, which is the reason we obtain only an inequality.}
\[
    \dim \R_{\FZ}^4(\M_{4,4}^{\ct})\leq 6222 %2387 
    \quad \text{ and }\quad \dim \RH^{10}(\M_{4,4}^{\ct}) = \dim \R^5_{\FZ}(\M_{4,4}^{\ct}) = 6224.
    \]
Therefore, we have
 \begin{align*} \dim \R\H^8(\M_{4,4}^{\ct}) &\leq 
 \dim \R_{\FZ}^4(\M_{4,4}^{\ct}) \\
 &< \dim \R^5_{\FZ}(\M_{4,4}^{\ct}) \\
 &= \dim \R\H^{10}(\M_{4,4}^{\ct}),
 \end{align*}
so $\RH^*(\M^{\ct}_{4,4})$ is not Gorenstein.
%\[
%\dim \RH^8(\M_{4,4}^{\ct})\leq 6222
%\]
%\[
%\dim \RH^{10}(\M_{4,4}^{\ct})\leq 6224
%\]
%\samc{Calculation without taking invariants is still running. Would give us better control of size of Gorenstein kernel.}
%\samc{Calculated $\dim \RH^{10}(\M_{4,4}^{\ct})^{\ss_2}=3839$. Matrix was size $226669\times 114112$} \samc{Calculated $\dim \RH^{10}(\M_{4,4}^{\ct})=6224$}
\end{proof}

\subsection{Proof of Theorem \ref{Pixtontheorem}}
As in Section \ref{yesgorensteinsection}, we compute an upper bound for the dimensions of $\R^i(\M_{g,n}^{\ct})$ by calculating the rank of the matrix of $3$-spin relations modulo a prime. We calculate lower bounds by computing the rank of the intersection pairings
\[
\R^i(\M_{g,n}^{\ct})\times \R^{2g-3+n-i}(\M_{g,n}^{\ct})\rightarrow \qq.
\]
The upper and lower bounds agree and the pairing is perfect, except for when $i=\lfloor \frac{2g-3+n}{2}\rfloor$. Hence, the $3$-spin relations are complete when $i\neq \lceil \frac{2g-3+n}{2}\rceil$. For $(g,n)=(6,0)$ and $(5,2)$, we have also shown the $3$-spin relations are complete for $i=\lceil\frac{2g-3+n}{2}\rceil=5$ in the proof of Proposition \ref{5and6}. 

When $(g,n)=(7,0)$, we compute that the pairing
\[
\R^5(\M_7^{\ct})\times \R^6(\M_7^{\ct})\rightarrow \qq
\]
has rank $277$. This matches the upper bound for $\R^5(\M_7^{\ct})$ by calculated the matrix of $3$-spin relations. Therefore, $\dim \R^5(\M_7^{\ct})=277$. From calculating the rank of matrix of $3$-spin relations, we have $\dim \R^6(\M_7^{\ct})\leq 278$. By Lemma \ref{moregenus}, the kernel of the pairing is at least one dimensional, hence $\dim \R^6(\M_7^{\ct})=278$.  
\qed
\begin{rem}
    The only obstruction to extending Theorem \ref{Pixtontheorem} to a few more cases such as $(g,n)=(4,4)$ and $(g,n)=(3,6)$ is computational. We have not been able to carry out the necessary calculations in high codimension for these cases.
\end{rem}

\section{Computational aspects}\label{computationalaspects}
Many of the results above are based on extensive computer calculations using the software package \texttt{admcycles} \cite{admcycles}. The standard functions in the package (for computing ranks of tautological rings, intersection matrices, etc) are well-suited for computations with small $g,n$. However, some of the results above required working on spaces $\Mb_{g,n}$ and $\M_{g,n}^{\ct}$ outside this range, leading to a steep increase in combinatorial complexity. 

In the following sections we give a summary of some of these challenges and the mathematical and algorithmic adjustments that allowed us to finish the calculations. To avoid repetitions, fix some $g,n$ and denote by $\M$ either $\Mb_{g,n}$ or $\M^{\ct}_{g,n}$.

\subsection{Matrices of 3-spin relations}
A first step in many of the computations is calculating the $3$-spin relations $\FZ^r(\M) \subseteq \S^r(\M)$. For this, \texttt{admcycles} first enumerates all decorated strata $[\Gamma, \gamma]$ forming the basis of $\S^r(\M)$. The standard ordering of this basis lists the graphs $\Gamma$ in increasing number of edges, and each of them is then decorated in all possible ways with $\kappa$ and $\psi$-classes. For $\M=\M_{g,n}^{\ct}$ all decorations $\gamma = \prod_{v \in V(\Gamma)} \gamma_v$ with $\deg(\gamma_v)$ greater than the socle degree of $\R^*(\M_{g(v),n(v)}^{\ct})$ are omitted.

The relations themselves are stored as the rows of a matrix $M_\FZ$ whose columns correspond to the above basis of $\S^r(\M)$. The rows themselves are enumerated by tuples $T=(\Gamma_0, v_0, D, (\gamma_v)_{v \neq v_0})$ of
\begin{itemize}
    \item a stable graph $\Gamma_0$ of a stratum of $\M$ together with a choice of vertex $v_0 \in V(\Gamma_0)$ where a $3$-spin relation will be inserted,
    \item combinatorial data\footnote{We will not need the precise nature of $D$ in the following discussion, but see \cite{Pixton} for details.} $D$ determining which relation is glued into $v_0$,
    \item decorations $\gamma_v$ at all other vertices $v \in V(\Gamma) \setminus \{v_0\}$.
\end{itemize}
Again, the implementation in \texttt{admcycles} lists these tuples $T$ in increasing order of number of edges of $\Gamma_0$. Since the relation associated to $T$ is supported on decorated strata $[\Gamma, \gamma]$ such that $\Gamma_0$ is a contraction of $\Gamma$, the corresponding matrix $M_\FZ$ is in general a non-square matrix with roughly upper-triangular shape. For instance, relations associated to $T$ with $|E(\Gamma_0)| \geq e$ will not feature any decorated strata $[\Gamma, \gamma]$ with $E(\Gamma) < e$. As a result, the matrix $M_\FZ$ is in general quite sparse. We list a few examples of dimensions, ranks and densities of such matrices in Figure \ref{tab:FZ_matrix_examples}.

On a practical level, the matrix $M_\FZ$ is calculated entry by entry: there is a function \texttt{FZ\_coeff} which given the data of $T$ and a decorated stratum $[\Gamma, \gamma]$ computes the associated entry of $M_\FZ$ (in row $T$ and column $[\Gamma, \gamma]$).

\begin{figure}[htbp]
    \centering
    \begin{tabular}{|c|c|c|c|c|c|}
        \hline
        $\mathcal{M}$ & $r$ & $m_{\FZ}$ & $\dim \mathsf{S}^r(\mathcal{M})$  & $\rank M_{\FZ}$ & $\rho$ \\
        \hline
        % Add your data rows here
        % Example:
        $\mathcal{M}_{4,3}^{\ct}$ & 3 & 1052 & 1400 & 832 & 0.0368000 \\ % Matrix dimensions: 1052 x 1400 0.0367999728408474
        $\mathcal{M}_{4,3}^{\ct}$ & 5 & 67138 & 27359 & 26791 &  0.0048397 \\ % 0.00483968633801859
        $\mathcal{M}_{4,3}^{\ct}$ & 7 & 799508 & 154522 & 154502 & 0.0017108 \\ % 0.00171076878936427
        \hline
    \end{tabular}
    \caption{Some examples of moduli spaces $\M$ and Chow degrees $r$, for which we list the number $m_\FZ$ of rows and $\dim \mathsf{S}^r(\mathcal{M})$ of columns of the $3$-spin matrix $M_{\FZ}$, as well as its rank and density $\rho$ of non-zero entries}
    \label{tab:FZ_matrix_examples}
\end{figure}

Compared to the default implementation in \texttt{admcycles} we made the following optimizations for the project above:
\begin{itemize}
    \item The calculation of the rows of $M_\FZ$ was parallelized: there is one parent process enumerating the tuples $T$ indexing the rows of the matrix, which are distributed to a number of child processes which calculate the individual rows.
    \item The output of the function \texttt{FZ\_coeff} is calculated from different contributions (e.g. from various vertices) and the functions calculating these contributions will by default cache all of their previous results. While this speeds up the calculation, it also lead to significant memory blow-up. After analyzing this memory usage, we switched to a Least Recently Used (LRU) caching with a fixed number of cache entries. In practice LRU caching still provides moderate speedup while significantly reducing the memory profile of \texttt{FZ\_coeff}.
    \item Instead of storing the rows of the matrix in the working memory, they are saved to the disk storage using the \texttt{shelve} library in Python. This allows us to restart partial computations and to share the resulting $3$-spin matrices.
\end{itemize}

\subsection{Ranks and basis vectors}
After obtaining the relation matrix $M_\FZ$, we want to either
\begin{itemize}
    \item calculate its rank, to determine the conjectural dimension of the tautological ring as $\dim \S^r(\M) - \rank M_\FZ$,
    \item calculate a conjectural basis of $\R^r(\M)$ as a subset of the generators of $\S^r(\M)$, using the rows of $M_\FZ$ to eliminate such generators until we obtain a linearly independent set; in this case, we prefer to have a basis supported on graphs with few edges (since e.g. this makes it easier to calculate the intersection numbers in the next section).
\end{itemize}
In practice, both of these goals can be achieved by computing a row-reduced echelon form of $M_\FZ$, since the basis elements will correspond to columns of this echelon form \emph{not} given by pivots.

The default implementation of \texttt{admcycles} performs this echelonization over the rational numbers (since the entries of $M_\FZ$ are by default elements of $\mathbb{Q}$). This has the advantage that the resulting echelon form allows us to express arbitrary generators of $\S^*(\M)$ in terms of the conjectural basis of $\R^*(\M)$, and this is used e.g. when comparing tautological classes. The two draw-backs of working over $\mathbb{Q}$ are that
\begin{itemize}
    \item divisions during the elimination process generally lead to a blow-up of denominators. While \texttt{SageMath} can calculate with rational numbers having numerator and denominator of arbitrary size, this can require significant memory usage.
    \item many specialized libraries for (sparse) linear algebra are geared towards calculations over finite fields.
\end{itemize}
In practice, choosing a random mid-sized prime (like $p=4001$), we can convert the matrix $M_\FZ$ to a matrix $M_\FZ^p$ over $\mathbb{F}_p$. This conversion only uses that none of the denominators of any entry are divisible by $p$. The expectation is that both the rank and the pivot columns of the echelon form of the matrices $M_\FZ$ and $M_\FZ^p$ coincide. 

More formally, it is the case that reducing mod $p$ can only \emph{lower} the rank of the matrix, which would make us \emph{miss} a tautological relation. When calculating the pairing matrices in the next step, this would produce an unexpected element in their kernel (representing the missed relation). Thus, if the intersection matrix does have full rank, we can \emph{a posteriori} conclude that the reduction modulo $p$ did not change the rank of $M_\FZ$.

Most calculations in the paper above were performed using the sparse echelonization algorithms of the \texttt{LinBox} library \cite{dumas2002linbox}. 

Here there is a subtle phenomenon: the standard row-reduced echelon form chooses the lexicographically smallest set of pivot columns in the row reductions. 
When applying it to the matrix $M_\FZ^p$ this benefits from the roughly right-upper-triangular nature of the matrix, since the elimination process does not fill too many of the zero entries of the matrix during intermediate steps. 
However, we do encounter a problem when trying to compute a basis of $\R^r(\M)$ consisting of decorated graphs with few edges. For this we need to find the lexicographically \emph{largest} set of pivot columns, essentially running the echelonization on a vertical flip of the original matrix $M_\FZ^p$. Now the matrix is left-upper triangular, which leads to significantly more fill-in in the intermediate stages of the algorithm. For this reason, we sometimes  manage to calculate the conjectural rank of $\R^r(\M)$ but fail to obtain a candidate basis.

\subsection{Intersection pairings}
Assume that in the previous step we managed to calculate that $\S^r(\M)/\FZ^r(\M)$ has dimension $d_r$. Then one way to show that the set $\FZ^r(\M)$ of $3$-spin relations is complete is to show that the intersection pairing
\[
\S^r(\M) \times \S^{r_c}(\M) \to \mathbb{Q}
\]
has rank $d_r$, where $r_c = \mathrm{socdeg}(\M)-r$ is the complementary degree to $r$. In principle this is a finite calculation, but again there are several possible speedups:
\begin{itemize}
    \item If via the previous step we were able to obtain conjectural bases for $\R^r(\M)$ and $\R^{r_c}(\M)$ then we can just calculate the pairing matrix (which is expected to be densely filled with entries in $\mathbb{Q}$) and compute its rank. This calculation can also be done modulo $p$, since a lower bound on the rank is enough.
    \item If we have a conjectural basis $\mathcal{B}$ for $\R^r(\M)$ but not for $\R^{r_c}(\M)$, we can instead follow a heuristic algorithm: we iteratively choose generators of $
    \S^{r_c}(\M)$, compute the vectors of intersection numbers with elements of $\mathcal{B}$ and add them as rows to a matrix $I$. By performing row-reductions on $I$ from time to time, we can monitor its rank, and the calculation finishes when this rank becomes maximal (equal to the cardinality $d_r$ of $\mathcal{B}$). In practice, one can start with picking random generators of $\S^{r_c}(\M)$ with at most $e_0 \geq 0$ edges, and increase the bound $e_0$ if the rank of $I$ starts stabilizing.
    \item If only the rank of $\R^r(\M)$ is known, one can apply the above heuristic by similarly choosing increasing sets of generators of $\S^r(\M)$ and $\S^{r_c}(\M)$ and tracking the rank of the resulting square matrix. The disadvantage is that previous echelonizations of intersection matrices $I$ cannot obviously be used in the next step since both rows and columns are added to $I$. 
\end{itemize}
Again, since calculations of intersection numbers are logically independent, some speed-up via parallelization is possible when calculating the entries of the matrix $I$.

\bibliographystyle{amsplain}
\bibliography{refs}
\end{document}